\newcommand{\xdownarrow}[1]{
  {\left\downarrow\vbox to #1{}\right.\kern-\nulldelimiterspace}
}
\definecolor{greenpigment}{rgb}{0.0, 0.65, 0.31}
\definecolor{iceberg}{rgb}{0.44, 0.65, 0.82}
\newcommand{\R}{\mathbb{R}}
\newcommand{\C}{\mathbb{C}}
\newcommand{\N}{\mathbb{N}}
\newcommand{\E}{\mathbb{E}}
\newcommand{\U}{\mathcal{U}}
\newcommand{\DD}{\mathcal{D}}
\newcommand{\g}{\mathfrak{g}}
\newcommand{\kk}{\mathfrak{k}}
\newcommand{\aL}{\mathfrak{a}}
\newcommand{\End}{\text{End}}
\newcommand{\Hom}{\text{Hom}}
\newcommand{\Hol}{\text{Hol}}
\newcommand{\Pol}{\text{Pol}}
\newcommand{\Ad}{\text{Ad}}
\newcommand{\Repart}{\text{Re}}
\newcommand{\supp}{\text{supp}}
\newcommand{\Tr}{\text{Tr}}
\newcommand{\Id}{\text{Id}}
\newcommand*{\QEDA}{\null\nobreak\hfill\ensuremath{\square}}
\theoremstyle{plain}
\numberwithin{equation}{section}
\theoremstyle{plain}
\newtheorem{cor}{Corollary}
\newtheorem{prop}{Proposition}
\newtheorem{thm}{Theorem}
\newtheorem{lem}{Lemma}
\newtheorem{defn}{Definition}
\theoremstyle{definition}
\newtheorem{exmp}{Example}
\theoremstyle{remark}
\newtheorem{req}{Remark}
\title{\textbf{A topological Paley-Wiener-Schwartz Theorem for sections of homogeneous vector bundles on $G/K$}}
\author{Martin \textsc{Olbrich} and Guendalina \textsc{Palmirotta}}
\date{} 
\begin{document}

\maketitle

\abstract
We study the Fourier transform for compactly supported distributional sections of complex homogeneous vector bundles on symmetric spaces of non-compact type $X=G/K$. 
We prove a characterisation of their range. 
In fact, from Delorme's Paley-Wiener theorem for compactly supported smooth functions on a real reductive group of Harish-Chandra class, we deduce topological Paley-Wiener and Paley-Wiener-Schwartz theorems for sections.
\tableofcontents

\section{Introduction}

One of the central theorems of harmonic analysis on $\R^n$, is the so-called Paley-Wiener theorem, named after the two mathematicians Raymond Paley and Norbert Wiener.
It describes the image of the Fourier transform
of the space $C^\infty_c(\R^n)$ of smooth functions with compact support as the space of entire functions on $\C^n$ satisfying some growth condition. 
The theorem has a counterpart, known as Paley-Wiener-Schwartz theorem. Here, the smooth functions are replaced by distributions $T \in C^{-\infty}_c(\R^n)$ and the growth condition by a weaker growth condition (e.g. \cite{Hormander1}, Thm. 7.3.1). 

Both theorems have been generalized to more general Lie groups $G$ and furthermore to some smooth manifolds carrying symmetries.
For example, the case of Riemannian symmetric spaces of non-compact type $X=G/K$ was considered by Helgason \cite{HelgasonA1} and Gangolli \cite{Gangolli}. 
They proved a Paley-Wiener theorem for compactly supported $K$-invariant smooth functions and Helgason \cite{HelgasonA2} even showed it for general compactly supported smooth functions on $X$. 
There is also a Paley-Wiener theorem for $K\times K$-finite compactly supported smooth functions on a real reductive Lie group $G$ of Harish-Chandra class due to Arthur \cite{Arthur} and Delorme \cite{Delorme}, formulated in terms of the so-called Arthur-Campolli and Delorme conditions, respectively. 
Delorme even proved a version without the $K \times K$-finiteness.
A generalization to $K$-finite functions on reductive symmetric spaces was presented by van den Ban and Schlichtkrull \cite{vandenBanDist2}.
Furthermore, later van den Ban and Souaifi \cite{vandenBan} proved, without using the proof or validity of any associated Paley-Wiener theorems of Arthur or Delorme, that the two compatibility conditions are equivalent.
Concerning the Paley-Wiener-Schwartz theorem for distributions on symmetric spaces, we mention Helgason \cite{HelgasonA2} and Eguchi, Hashizume, Okamato \cite{Eguchi}.
Moreover, van den Ban and Schlichtkrull \cite{vandenBanDist} also proved a topological Paley-Wiener-Schwartz theorem for $K$-finite distributions on reductive symmetric spaces.\\
Our aim is to establish a topological Paley-Wiener theorem for (distributional) sections of homogeneous vector bundles on $X$ using Delorme's intertwining conditions.
Thus, starting, in Section~\ref{sect:DelormePW} with Delorme's Paley-Wiener theorem (\cite{Delorme}, Thm.~2) in the setting  of van den Ban and Souaifi \cite{vandenBan}, we will adjust it, in Sections~\ref{sect:FT} and \ref{sect:DelormeIntw}, for our proposes.
More precisely, we describe the intertwining conditions for sections and show that there are equivalent with Delorme's one by using Frobenius-reciprocity (Prop.~\ref{prop:Level01} \& Thm.~\ref{thm:equivintcond}).
We consider three levels, (\textcolor{blue}{Level~1}) refers to Delorme's Paley-Wiener theorem (Thm.~\ref{thm:Delorme1}), (\textcolor{blue}{Level~2}) corresponds to the desired Paley-Wiener theorem for sections (Thm.~\ref{thm:PWsect}) and (\textcolor{blue}{Level~3}) stands for the Paley-Wiener theorem for 'spherical functions' (Thm.~\ref{thm:PWsect}).
For the last, we fixed an irreducible $K$-representations on the left while a right, not necessary irreducible, $K$-type $*$ is fixed by the bundle $\E_* \rightarrow X$.
In this way, it will be much easier to manage the intertwining conditions. \\
Finally in Section~\ref{sect:PWS}, we present, a topological Paley-Wiener-Schwartz theorem for distributional sections (Thm.~\ref{thm:PWSsect}) in both levels (\textcolor{blue}{Level 2}) and (\textcolor{blue}{Level 3}).
We used van den Ban and Schlichtkrull's technique \cite{vandenBanDist} as well as Camporesi's Plancherel theorem for sections (\cite{Camporesi}, Thm.~3.4 \& Thm.~4.3).\\
This paper ends, in Section~\ref{sect:ImpactDO}, by analysing consequences of this theorem for linear invariant differential operators between sections of homogeneous vector bundles (Prop.~\ref{prop:1}).

The motivation behind this work lies in solvability questions of systems of invariant differential equations on symmetric spaces $G/K$. In fact, the results of the present paper as well as applications to solvability questions are part of the doctoral dissertation \cite{Palmirotta} of the second author. For further details, we refer to \cite{Palmirotta} and the upcoming papers (\cite{PalmirottaIntw}, \cite{PalmirottaSolv}).

\section{On Delorme's Paley-Wiener Theorem} \label{sect:DelormePW}
Let $G$ be a real connected semi-simple Lie group with finite center of non-compact type with Lie algebra $\g$ and $K \subset G$ its maximal compact subgroup with Lie algebra $\kk$.
The quotient $X=G/K$, then is a Riemannian symmetric space of non-compact type.

Let $\g=\kk \oplus \mathfrak{p}$ be the Cartan decomposition, and let $\aL$ be a maximal abelian subspace of $\mathfrak{p}$.
Fix a corresponding minimal parabolic subgroup $P=MAN$ of $G$ with split component $A=\exp(\aL)$, nilpotent Lie group $N$ and $M=Z_K(\aL)$ being the centralizer of $A$ in $K$.
Let $(\sigma,E_\sigma) \in \widehat{M}$ be a finite-dimensional irreducible representation of $M$ and $\lambda \in \aL^*_\C \cong \C^n$. 
For fixed $(\sigma,\lambda) \in \widehat{M} \times \aL^*_\C$, let $(\sigma_{\lambda}, E_{\sigma,\lambda})$ be the representation of $P$ on the vector space $E_{\sigma,\lambda}=E_\sigma$
such that
$\sigma_\lambda (man)=a^{\lambda+\rho}\sigma (m) \in \End(E_{\sigma,\lambda})$
for $m\in M$, $a\in A, n\in N$ and where $\rho \in \aL^+$ is the half sum of the positive roots of $(\g,\aL)$, counted with multiplicities.
 We use the notation $a^\lambda$ for $e^{\lambda \log(a)}$.
Then, the space
$$H^{\sigma,\lambda}_{\infty}:=\{f: G \stackrel{C^\infty}{\rightarrow} E_{\sigma,\lambda} \;|\; f(gman)=a^{-(\lambda+\rho)}\sigma(m)^{-1}(f(g))\} \cong C^\infty(G/P, \E_{\sigma,\lambda})$$
together with the left regular action
$(\pi_{\sigma,\lambda}(g) f)(x):=f(g^{-1}x)=(l_gf)(x)$ for $g,x \in G$ and $f \in H^{\sigma,\lambda}_\infty$,
is the space of smooth vectors of the principal series representations of $G$ induced from the $P$-representation $\sigma_\lambda$ on $E_{\sigma,\lambda}$ (e.g. \cite{Knapp2}, p. 168).
The restriction map from $H^{\sigma,\lambda}_{\infty}$ to functions on $K$ is injective by the Iwasawa decomposition $g=\kappa(g)e^{a(g)}n(g) \in KAN$ of $G$.
In particular, for $f \in H^{\sigma,\lambda}_{\infty}$ we have
$ f(g)=f(\kappa(g)e^{a(g)}n(g)) = a(g)^{-(\lambda+\rho)}(f(\kappa(g))).$
This yields, the so-called \textit{compact picture} of $H^{\sigma,\lambda}_{\infty}$ (e.g. \cite{Knapp2}, p.~168).
 It has the advantage that the representation space 
\begin{equation} \label{eq:Hsigmainfty}
H^{\sigma}_\infty := \{\varphi: K \stackrel{C^\infty}{\rightarrow} E_{\sigma} \;|\; \varphi(km)=\sigma(m)^{-1}\varphi(k), \; k \in K, m\in M\} \cong C^\infty(K/M,\E_{\sigma})
\end{equation}
does not depend on $\lambda$.
Here, $H^\sigma_\infty$ is equipped with the usual Fréchet topology. From time to time, we need the $L^2$-norm.
In the compact picture, the action of all elements $g\in G$, which are not in $K$, is slightly more involved, since  we need to commute them with the argument $k \in K$, i.e.
\begin{equation} \label{eq:repK}
(\pi_{\sigma,\lambda}(g)\varphi)(k)= a(g^{-1}k)^{-(\lambda+\rho)} \varphi(\kappa(g^{-1}k)),\;\;\;\;\varphi \in H^\sigma_\infty.
\end{equation}

\subsubsection*{Fourier transform for $G$ in (\textcolor{blue}{Level 1})}
Let
\begin{equation*} \label{eq:Cclosedball}
C^\infty_c(G)=\bigcup_{r> 0} C^\infty_r(G):=\bigcup_{r> 0} \{f \in C^\infty(G)\;|\; supp(f) \in \overline{B}_r(o)\}
\end{equation*}
be the space of compactly supported smooth complex functions on $G$, where 
$$\overline{B}_r(o):=\{g \in G \; | \; {dist}_X (gK,o) \leq r \} \subset G$$
denotes the preimage of the closed ball of radius $r$ and center $o=eK$ in $X$ under the projection $G \rightarrow X$.
Here, ${dist}_X$ means a fixed $G$-invariant Riemannian distance on $X$ and $e$ is the neutral element of $G$.
We equip $C^\infty_r(G)$ with the usual Fréchet topology, thus
$C^\infty_c(G)$ is a LF-space.
Given $\sigma \in \widehat{M}$, let us consider the map
$$\pi_{\sigma, \cdot}: G \rightarrow ( \aL^*_\C \rightarrow \End(H^\sigma_\infty)), g \mapsto (\lambda \mapsto \pi_{\sigma,\lambda}(g)).$$

\begin{defn}[Fourier transform for $G$ in (\textcolor{blue}{Level 1})]  \label{def:FTDelorme}
Fix $(\sigma,\lambda) \in \widehat{M} \times \aL^*_\C$, we define the Fourier transform of $f\in~C^\infty_c(G)$ by the operator
$$\mathcal{F}_{\sigma,\lambda}(f):=\pi_{\sigma,\lambda}(f)=\int_G f(g) \pi_{\sigma,\lambda}(g) \;dg \in \emph\End(H^{\sigma}_\infty).$$
\end{defn}

We denote by $\Hol(\aL^*_\C)$ the space of holomorphic functions in $\aL^*_\C$ and by $\Hol(\aL^*_\C, \End(H^\sigma_\infty))$ the space of maps
$ \aL^*_\C \ni \lambda \mapsto \phi(\lambda) \in \text{End}(H^{\sigma}_\infty)$
such that
\begin{itemize}
\item[$(1.i)$] for $\varphi\in H^{\sigma}_\infty,$ the function
$\lambda \mapsto \phi(\lambda)\varphi\in H^{\sigma}_\infty$
 is holomorphic.
\end{itemize}
From (\cite{Delorme}, Lem. 10 (ii)), we deduce the following statement.

\begin{prop}
The family of applications $f \mapsto \mathcal{F}_{\sigma,\lambda}(f)$
 is a linear map from
 $C^\infty_c(G)$ into\\$\prod_{\sigma\in \widehat{M}} \emph\Hol(\aL^*_\C, \emph\End(H^\sigma_\infty)). $\QEDA
\end{prop}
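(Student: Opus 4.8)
The plan is to verify the two defining properties of the target space separately: linearity of each map $f \mapsto \mathcal{F}_{\sigma,\lambda}(f)$, which is immediate, and the holomorphy condition $(1.i)$, which is the real content. For a fixed $\sigma \in \widehat{M}$ and a fixed $f \in C^\infty_c(G)$, I must show that for every $\varphi \in H^\sigma_\infty$ the map $\lambda \mapsto \mathcal{F}_{\sigma,\lambda}(f)\varphi = \int_G f(g)\,\pi_{\sigma,\lambda}(g)\varphi\,dg$ is holomorphic from $\aL^*_\C$ into the Fréchet space $H^\sigma_\infty$. The natural route is to combine two ingredients: first, for fixed $g$ and $\varphi$, the map $\lambda \mapsto \pi_{\sigma,\lambda}(g)\varphi$ is holomorphic with values in $H^\sigma_\infty$ — this is visible from the compact-picture formula \eqref{eq:repK}, since $\lambda \mapsto a(g^{-1}k)^{-(\lambda+\rho)}$ is entire in $\lambda$ (it is an exponential of a $\C$-linear functional of $\lambda$) and the dependence is smooth in $k$; second, the integral over the compact set $\supp(f) \subset \overline{B}_r(o)$ of a continuous, holomorphically-$\lambda$-parametrised family of Fréchet-space-valued functions is again holomorphic, by differentiation under the integral sign (or equivalently by Morera's theorem applied to $\langle \cdot, \mu\rangle$ for $\mu$ in a separating family of continuous functionals, together with a dominated-convergence argument on the compact support).

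Concretely, the steps I would carry out are: (i) recall that $C^\infty_c(G) = \bigcup_{r>0} C^\infty_r(G)$ and fix $f \in C^\infty_r(G)$ so that the integral is really over the compact set $\overline{B}_r(o) \cap \supp(f)$; (ii) cite (\cite{Delorme}, Lem.~10~(ii)) to get that $(\sigma,\lambda,g) \mapsto \pi_{\sigma,\lambda}(g)$, and more precisely $\lambda \mapsto \pi_{\sigma,\lambda}(g)\varphi$, enjoys the required holomorphy and local-boundedness properties uniformly for $g$ in compacta — this is precisely what the statement says we should deduce from; (iii) conclude that $\lambda \mapsto \int_G f(g)\pi_{\sigma,\lambda}(g)\varphi\,dg$ is a well-defined element of $H^\sigma_\infty$ because the integrand is a compactly supported continuous $H^\sigma_\infty$-valued function, and that holomorphy in $\lambda$ passes through the integral since on any compact subset of $\aL^*_\C$ the relevant seminorms of $\pi_{\sigma,\lambda}(g)\varphi$ are bounded uniformly in $(g,\lambda)$, allowing one to interchange $\bar\partial_\lambda$ (or a contour integral) with $\int_G$; (iv) observe that linearity in $f$ is obvious from linearity of the integral, and that nothing in the argument mixes different $\sigma$, so the assembled map indeed lands in the product $\prod_{\sigma \in \widehat{M}} \Hol(\aL^*_\C, \End(H^\sigma_\infty))$.

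The main obstacle, such as it is, is purely a matter of bookkeeping about Fréchet-space-valued holomorphy: one must be careful that ``holomorphic'' for maps into $H^\sigma_\infty$ is understood in the appropriate (weak = strong, for Fréchet spaces) sense, and that the interchange of integral and the Cauchy–Riemann operator is justified by a genuine local uniform bound coming from the compactness of $\supp(f)$ together with the continuity statement in (\cite{Delorme}, Lem.~10). Once Delorme's lemma is invoked, there is no analytic difficulty left; the proof is essentially ``integrate a holomorphic family against a compactly supported smooth density.'' Hence the proposition follows, and I would present it in two or three lines: linearity is trivial, and property $(1.i)$ is \eqref{eq:repK} plus (\cite{Delorme}, Lem.~10~(ii)) plus differentiation under the integral over the compact support.
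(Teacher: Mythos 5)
Your proposal is correct and follows the same route the paper intends: the paper gives no written proof, simply deducing the statement from (\cite{Delorme}, Lem.~10~(ii)), and your argument---linearity of the integral, entirety of $\lambda\mapsto a(g^{-1}k)^{-(\lambda+\rho)}$ via \eqref{eq:repK}, and interchange of $\bar\partial_\lambda$ with the integral over the compact support justified by the local uniform bounds from Delorme's lemma---is exactly the standard fleshing-out of that citation. No gaps.
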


\subsubsection*{Delorme's Paley-Wiener theorem and intertwining conditions in (\textcolor{blue}{Level 1})}

We now proceed with the definition of Delorme's Paley-Wiener space (\cite{Delorme}, Def. 3).
It induced Delorme's intertwining conditions for \textit{derived} versions of $H^{\sigma}_\infty$ (\cite{Delorme}, Sect. 1.5 \& Déf.~3~(4.4)). 
Van den Ban and Souaifi present a more elegant reformulation of them (\cite{vandenBan}, Sect. 4.5, in particular Lem. 4.4. and Prop. 4.5.). In the same spirit, we present a very similar definition of derived $G$-representations.

\begin{defn}[$m$-th derived representation] \label{def:succderv}
For $\lambda \in \aL^*_\C$, let $\emph\Hol_\lambda$ be the set of germs at $\lambda$ of $\C$-valued holomorphic functions $\mu \mapsto f_\mu$
and $m_\lambda \subset \emph \Hol_\lambda$ the maximal ideal of germs vanishing at $\lambda$.\\
Denote by $H^{\sigma}_{[\lambda]}$ the set of germs at $\lambda$ of $H^{\sigma}_\infty$-valued holomorphic functions 
$\mu \mapsto \phi_{\mu} \in H^{\sigma}_\infty$ 
with $G$-action
\begin{equation*} \label{eq:Gactionm}
(g\phi)_{\mu} = \pi_{\sigma,\mu}(g) \phi_{\mu},\;\;\; g \in G.
\end{equation*}
For $m\in \N_0$, it induces a representation $\pi^{(m)}_{\sigma,\lambda}$ on the space
\begin{equation} \label{eq:succderv}
H^{\sigma,\lambda}_{\infty,(m)} := H^{\sigma}_{[\lambda]} / m_\lambda^{m+1} H^{\sigma}_{[\lambda]},
\end{equation}
which is equipped with the natural Fréchet topology.
We call this representation the $m$-th derived principal series representation of $G$.
\end{defn}

Here, $\Hol_\lambda$ acts on $H^{\sigma}_{[\lambda]}$ by pointwise multiplication.
Note that the $m=$0-th derived representation $H^{\sigma,\lambda}_{\infty,(0)} \cong H^{\sigma}_\infty$ is the space of smooth vectors of the principal series $G$-representation in the compact picture.
Intuitively, we can say that $H^{\sigma,\lambda}_{\infty,(m)}$ contains all Taylor polynomials of order $m$ at $\lambda$ of holomorphic families $\phi_{\mu}$.
Moreover, $\phi \in \prod_{\sigma\in \widehat{M}} \Hol(\aL^*_\C, \End(H^\sigma_\infty))$ induces an operator on each $H^{\sigma,\lambda}_{\infty,(m)} $.\\
The following definition turns out to be equivalent to Delorme's intertwining condition (\cite{Delorme}, Déf. 3 (4.4)).

\begin{defn}[Delorme's intertwining condition in (\textcolor{blue}{Level 1})]
\label{defn:intwcond}
Let $\Xi$
be the set of all 3-tuples $(\sigma,\lambda,m)$ with $\sigma \in \widehat{M}$, $\lambda \in  \aL^*_\C$ and $m\in\N_0$.
Consider the $m$-th derived $G$-representation $H^{\sigma,\lambda}_{\infty,(m)}$ defined in (\ref{eq:succderv}). 
For every finite sequence $\xi=(\xi_1,\xi_2,\dots,\xi_s) \in \Xi^s, s\in \N$, we define the $G$-representation
$$H_\xi:= \bigoplus_{i=1}^s H^{\sigma_i,\lambda_i}_{\infty,(m_i)}.$$
We consider proper closed $G$-subrepresentations $W \subseteq H_\xi$.\\
Such a pair $(\xi,W)$ with $\xi \in \Xi^s$ and $W \subset H_\xi$ as above, is called an intertwining datum.
Every  function $\phi \in \prod_{\sigma\in \widehat{M}} \emph \Hol(\aL^*_\C, \emph \End(H^\sigma_\infty))$ induces an element
$$\phi_\xi \in \bigoplus_{i=1}^s \emph\End(H^{\sigma_i,\lambda_i}_{\infty,(m_i)}) \subset \emph\End(H_\xi).$$
\begin{itemize}
\item[(D.a)] We say that $\phi$ satisfies Delorme's intertwining condition, if $\phi_\xi(W) \subseteq W$ for every intertwining datum $(\xi,W)$.
\end{itemize}
\end{defn}

\noindent
Next, we define Delorme's Paley-Wiener space (\cite{Delorme}, Déf. 3).
We denote by $\U(\kk)$ the universal enveloping algebra of complexification of $\kk$ (e.g. \cite{Jacobson} Chap. V).
Note that  our fixed Riemannian metric corresponds to a $Ad$-invariant bilinear form on $\g$, which is definit on $\kk$ and $\mathfrak{p}$.
Therefore, we get a norm $|\cdot|$ on $\mathfrak{b}^*_\C$ for each subspace $\mathfrak{b} \subset \kk$ or $\mathfrak{b} \subset \mathfrak{p}$.
\begin{defn}[Paley-Wiener space in (\textcolor{blue}{Level 1})] \label{def:PWL1}
For $r> 0$, Delorme's Paley-Wiener space is the vector space
\begin{equation} \label{eq:PWDelorme} 
PW_r(G):= \Big\{\phi \in \prod_{\sigma\in \widehat{M}} \emph \Hol(\aL^*_\C,  \emph\End(H^\sigma_\infty)) \;|\; \phi \text{ satisfies the growth condition } (1.ii)_r \text{ below and } (D.a)\Big\}.
\end{equation}
Here,
\begin{itemize}
\item[(1.ii)$_r$] for all $Y_1,Y_2 \in \U(\mathfrak{k}), (\sigma, \lambda) \in \widehat{M}\times \aL^*_\C$ and $N \in \N_0$, there exists a constant $C_{r,N,Y_1,Y_2}>0$ such that
$$||\pi_{\sigma,\lambda}(Y_1)\phi(\sigma,\lambda) \pi_{\sigma,\lambda}(Y_2)|| \leq C_{r,N,Y_1,Y_2}
(1+|\Lambda_{\sigma}|^2+|\lambda|^2)^{-N} e^{r|\emph \Repart(\lambda)|}$$
for $\phi \in  \emph \End(H^\sigma_\infty)$ and
where $\Lambda_{\sigma}$ is the highest weight of $\sigma$, $||\cdot||$ is the operator norms on $H^{\sigma}_\infty$ with respect to the $L^2$-norm of $H^\sigma_\infty$.
\end{itemize}
\end{defn}

Notice that, due to Lem. 10 (i) in \cite{Delorme}, the space $PW_r(G)$ equipped with semi-norms:
$$||\phi||_{r,N,Y_1,Y_2}:= \sup_{(\sigma,\lambda) \in \widehat{M} \times \aL^*_\C} 
(1+|\Lambda_{\sigma}|^2+|\lambda|^2)^{N}e^{-r|\text{Re}(\lambda)|} ||\pi_{\sigma,\lambda}(Y_1)\phi(\sigma,\lambda) \pi_{\sigma,\lambda}(Y_2)||_{H^{\sigma}_\infty}, \;\; \phi \in PW_r(G)$$
is a Fréchet space.
Furthermore, the intertwining condition $(D.a)$ in Def.~\ref{def:PWL1} is a special case of van den Ban and Souaifi's one (\cite{vandenBan}, Cor. 4.7 and Prop. 4.10.).
The small difference is, that instead of the defined $m$-th derived representations $H^{\sigma,\lambda}_{\infty,(m)}$ (\ref{eq:succderv}), they consider 
$$H^\sigma_{[\lambda],E}:= H^\sigma_{[\lambda]} \otimes_{\Hol_\lambda} E,$$
where $E$ is a finite-dimensional $\Hol_\lambda$-module. 
By the following proposition, this leads to equivalent intertwining conditions.

\begin{prop} 
With the previous notations, let $(\sigma,\lambda) \in \widehat{M} \times \aL^*_\C$.
Then, for $E=\emph \Hol_\lambda/m_\lambda^{m+1}$, we have that $H^\sigma_{[\lambda],E} \cong H^{\sigma,\lambda}_{\infty,(m)}$. \\
Moreover, for any finite-dimensional $\emph\Hol_\lambda$-module $E$,
there exists $m_1, \dots, m_s \in \N_0$ such that $H^\sigma_{[\lambda],E}$ is a quotient of $H^{\sigma,\lambda}_{\infty,(m_1)} \oplus \dots \oplus H^{\sigma,\lambda}_{\infty,(m_s)}$.
\end{prop}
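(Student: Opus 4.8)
The plan is to treat the two assertions separately, both being essentially algebraic facts about germs of holomorphic functions tensored over the ring $\Hol_\lambda$.

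For the first assertion, I would identify $\Hol_\lambda/m_\lambda^{m+1}$ explicitly as the algebra of Taylor polynomials of order $m$ at $\lambda$. Then I would write down the natural map
\[
H^\sigma_{[\lambda]} \otimes_{\Hol_\lambda} \bigl(\Hol_\lambda/m_\lambda^{m+1}\bigr) \longrightarrow H^\sigma_{[\lambda]} / m_\lambda^{m+1} H^\sigma_{[\lambda]} = H^{\sigma,\lambda}_{\infty,(m)}
\]
induced by $\phi \otimes \bar f \mapsto \overline{f\phi}$, and check it is a well-defined $G$-equivariant linear isomorphism: well-definedness and surjectivity are immediate from the definition of the tensor product over $\Hol_\lambda$, and injectivity follows because tensoring the short exact sequence $0 \to m_\lambda^{m+1} \to \Hol_\lambda \to \Hol_\lambda/m_\lambda^{m+1}\to 0$ with $H^\sigma_{[\lambda]}$ shows the kernel is exactly the image of $m_\lambda^{m+1}\otimes H^\sigma_{[\lambda]}$, i.e. $m_\lambda^{m+1}H^\sigma_{[\lambda]}$. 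Compatibility with the $G$-action is clear since $G$ acts $\Hol_\lambda$-linearly (the action is pointwise in $\mu$ and commutes with multiplication by scalar germs), and continuity in both directions with respect to the natural Fréchet topologies is routine. This gives $H^\sigma_{[\lambda],E}\cong H^{\sigma,\lambda}_{\infty,(m)}$ as topological $G$-representations.

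For the second assertion, let $E$ be any finite-dimensional $\Hol_\lambda$-module. The key structural input is that $\Hol_\lambda$ is a local ring with maximal ideal $m_\lambda$, and $E$ is finite-dimensional over $\C$, hence a finitely generated $\Hol_\lambda$-module annihilated by some power $m_\lambda^{N+1}$ of the maximal ideal (because $m_\lambda E \subsetneq E$ by Nakayama unless $E=0$, and iterating the dimension drops, so $m_\lambda^{N+1}E = 0$ for $N+1 = \dim_\C E$). Thus $E$ is a finitely generated module over the local Artinian ring $\Hol_\lambda/m_\lambda^{N+1}$; choosing generators $e_1,\dots,e_s$ yields a surjection of $\Hol_\lambda$-modules $(\Hol_\lambda/m_\lambda^{N+1})^s \twoheadrightarrow E$, and one can even arrange each factor to be $\Hol_\lambda/m_\lambda^{m_i+1}$ with $m_i \le N$ by picking each generator's precise annihilator. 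Now apply the exact functor $H^\sigma_{[\lambda]} \otimes_{\Hol_\lambda} (-)$ — it is right exact, hence preserves surjections — and use the first assertion on each summand to get a $G$-equivariant continuous surjection
\[
H^{\sigma,\lambda}_{\infty,(m_1)} \oplus \dots \oplus H^{\sigma,\lambda}_{\infty,(m_s)} \;\cong\; \bigoplus_{i=1}^s H^\sigma_{[\lambda]} \otimes_{\Hol_\lambda} \bigl(\Hol_\lambda/m_\lambda^{m_i+1}\bigr) \;\twoheadrightarrow\; H^\sigma_{[\lambda],E},
\]
exhibiting $H^\sigma_{[\lambda],E}$ as the desired quotient.

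The main obstacle I anticipate is not the algebra but the topological bookkeeping: one must check that the algebraic isomorphism of the first part is a homeomorphism for the "natural Fréchet topologies" alluded to in Definition~\ref{def:succderv}, and that the surjection in the second part is open (or at least continuous with the quotient topology matching), so that "quotient" is meant in the topological category. Since $H^\sigma_{[\lambda]}$ is built from germs of holomorphic $H^\sigma_\infty$-valued functions and the quotients are finite-codimensional in an appropriate sense — indeed $H^{\sigma,\lambda}_{\infty,(m)}$ is a finite direct sum of copies of $H^\sigma_\infty$ as a topological vector space once one fixes coordinates on $\aL^*_\C$ — the topologies are manageable, and the open mapping theorem for Fréchet spaces closes the remaining gaps. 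I would relegate these verifications to a remark or a short paragraph, since the essential content is the commutative-algebra computation above.
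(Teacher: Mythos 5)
Your proposal is correct and follows essentially the same route as the paper: the standard isomorphism $M\otimes_R R/I\cong M/IM$ for the first claim, and for the second the observation that a finite-dimensional $\Hol_\lambda$-module is killed by a power of $m_\lambda$, so that it is covered by a finite direct sum of $\Hol_\lambda/m_\lambda^{m_i+1}$'s, followed by right-exactness of $H^\sigma_{[\lambda]}\otimes_{\Hol_\lambda}(-)$. The only cosmetic difference is that you derive the annihilation by $m_\lambda^{N+1}$ directly via Nakayama, where the paper cites Lemma~2.1 of \cite{vandenBan} on cofinite ideals; the extra topological remarks you add are not addressed in the paper's proof.
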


\begin{proof}
Consider a (commutative) ring $R$ with neutral element 1, a $R$-module $M$ and $I \subset R$ an ideal. Then, we have the following isomorphism
$$M \otimes_R R/I \cong M/IM.$$
In fact, by an algebraic computation, one can easily show that the two maps
\begin{eqnarray*}
\alpha:M \otimes_R R/I \rightarrow M/IM &\text{ and }& \beta: M/IM \rightarrow M \otimes_R R/I\\
\alpha(m\otimes [r]):=[rm] && \beta([m]):=m \otimes [1]
\end{eqnarray*}
are well-defined and inverse to each other.
Here $[\cdot]$ denotes the class in the corresponding quotient.
For $m\in \N_0$ and $R=\Hol_\lambda$, consider its maximal ideal
$m_\lambda^{m+1} \subset \Hol_\lambda$.
Take $E= \Hol_\lambda/m_\lambda^{m+1}=R/I$ and $M=H^\sigma_{[\lambda]}$, then
$$ H^\sigma_{[\lambda]} \otimes_{\Hol_\lambda} E \cong H^\sigma_{[\lambda]}/m_\lambda^{m+1} H^\sigma_{[\lambda]}=: H_{\infty, (m)}^{\sigma,\lambda}.$$
Moreover, by their Lem. 2.1 in \cite{vandenBan}, an ideal $\mathcal{I}$ in $\Hol_\lambda$ is cofinite, if and only, if there exists $m\in \N_0$ such that $m_\lambda^{m+1} \subset \mathcal{I}$.\\
Thus, for some $s\in \N$ and finitely many cofinite ideals $m_\lambda^{m_1+1}, \dots, m_\lambda^{m_s+1}$ of $\Hol_\lambda$, we have that $E$ is a quotient of the direct sum 
$$\Hol_\lambda/m_\lambda^{m_1+1} \oplus \Hol_\lambda/m_\lambda^{m_2+1} \oplus \dots \oplus \Hol_\lambda/m_\lambda^{m_s+1}.$$
Hence, the map
$$H^{\sigma,\lambda}_{\infty,(m_1)} \oplus \dots \oplus H^{\sigma,\lambda}_{\infty,(m_s)} \longrightarrow E$$
is surjective and the result follows.
\end{proof}

Now, we can formulate Delorme's Paley-Wiener theorem.

\begin{thm}[Paley-Wiener Theorem, \cite{Delorme}, Thm.~2] \label{thm:Delorme1}
For $r> 0$, the Fourier transform
$$ C_r^\infty(G) \ni f \mapsto \mathcal{F}_{\sigma,\lambda}(f)\in PW_r(G), \;\;\; (\sigma,\lambda) \in \widehat{M} \times \aL^*_\C$$
is a topological isomorphism between the two Fréchet spaces $C_r^\infty(G)$ and $PW_r(G)$. \qed
\end{thm}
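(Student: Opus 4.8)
\medskip
\noindent
The plan is to treat this as Delorme's theorem (\cite{Delorme}, Thm.~2) transcribed into the derived-representation language of \cite{vandenBan}, so I would present its architecture in two parts: the soft direction, that $\mathcal{F}$ maps $C^\infty_r(G)$ continuously into $PW_r(G)$, and the deep direction, surjectivity onto $PW_r(G)$ together with continuity of the inverse.

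For the soft direction I would fix $f\in C^\infty_r(G)$ and argue as follows. Holomorphy of $\lambda\mapsto \pi_{\sigma,\lambda}(f)\varphi$ for $\varphi\in H^\sigma_\infty$ (condition $(1.i)$) comes from differentiating under the integral sign, as already recorded. For the growth condition $(1.ii)_r$ I would first observe that $\pi_{\sigma,\lambda}(Y_1)\pi_{\sigma,\lambda}(f)\pi_{\sigma,\lambda}(Y_2)=\pm\,\pi_{\sigma,\lambda}(h)$, where $h$ arises from $f$ by a right-invariant differential operator attached to $Y_1$ and a left-invariant one attached to $Y_2$; since differentiation does not enlarge supports, $h\in C^\infty_r(G)$. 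In the compact picture $(\pi_{\sigma,\lambda}(h)\varphi)(k)=\int_G h(g)\,a(g^{-1}k)^{-(\lambda+\rho)}\varphi(\kappa(g^{-1}k))\,dg$, and for $g$ with $\mathrm{dist}_X(gK,o)\le r$ the Iwasawa estimate $|a(g^{-1}k)|\le r$ produces the factor $e^{r|\Repart(\lambda)|}$. The polynomial decay in $|\lambda|$ and $|\Lambda_\sigma|$ I would obtain by replacing $h$ with $\Omega^{N}h$ on one side, $\Omega$ the Casimir of $\g$, which acts on $\pi_{\sigma,\lambda}$ by a scalar comparable to $1+|\lambda|^2+|\Lambda_\sigma|^2$; these bounds are uniform in the seminorms, giving continuity of $\mathcal{F}$. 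Finally $\mathcal{F}(f)$ satisfies $(D.a)$ essentially for free: for every intertwining datum $(\xi,W)$ the operator $\mathcal{F}(f)_\xi$ is precisely the image of $f$ under the integrated derived representation on $H_\xi=\bigoplus_i H^{\sigma_i,\lambda_i}_{\infty,(m_i)}$, hence it commutes with all $G$-morphisms and in particular maps the closed $G$-invariant subspace $W$ into itself.

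For the deep direction I would recall that injectivity of $\mathcal{F}$ is classical — the matrix coefficients of the unitary principal series separate points of $C^\infty_c(G)$ by Harish-Chandra's Plancherel inversion. Surjectivity with support control, i.e.\ reconstructing $f$ from $\phi\in PW_r(G)$ with $\supp f\subseteq \overline{B}_r(o)$, I would reduce to the $K\times K$-finite Paley-Wiener theorem of Arthur and Delorme by passing to $K\times K$-isotypic components and invoking that a $K\times K$-finite element of $PW_r(G)$ satisfies the Arthur-Campolli / Delorme conditions, the equivalence of those two being exactly \cite{vandenBan}. In that setting $f$ is obtained as a wave packet, $f(g)=\sum_{\sigma}\int_{i\aL^*}\Tr\big(\phi(\sigma,\lambda)\,\pi_{\sigma,\lambda}(g)^{-1}\big)\mu_\sigma(\lambda)\,d\lambda$ with $\mu_\sigma$ the Plancherel density; convergence and smoothness follow from the rapid decay in $(1.ii)_r$, and the support bound from shifting the contour into $\aL^*_\C$, using holomorphy and $e^{r|\Repart(\lambda)|}$, where the poles of $\mu_\sigma$ crossed in the shift are annihilated precisely by the intertwining relations $(D.a)$ through the residue theorem. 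Once $\mathcal{F}$ is a continuous bijection of Fréchet spaces, continuity of $\mathcal{F}^{-1}$ follows from the open mapping theorem (or directly from the wave-packet estimates).

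The hard part, as always in this circle of results, will be the surjectivity-with-support statement: proving that the wave packet attached to an \emph{arbitrary} $\phi\in PW_r(G)$ is compactly supported in the prescribed ball. This is where the intertwining conditions are genuinely needed and where the full machinery of Harish-Chandra's harmonic analysis — Plancherel measure, $c$-functions, Maass-Selberg relations, residue calculus — together with the reduction to the $K\times K$-finite case and the Arthur-Campolli $\leftrightarrow$ Delorme equivalence of \cite{vandenBan}, has to be brought in; everything else is soft functional analysis and standard elliptic estimates.
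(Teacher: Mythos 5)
The paper does not prove this theorem: it is imported verbatim from Delorme (\cite{Delorme}, Thm.~2), stated with an immediate \qed, and the only original content surrounding it is the remark that Delorme's formulation involves all cuspidal parabolic subgroups and that the restriction to the minimal parabolic $P$ is justified by Casselman's subrepresentation theorem (cf.\ \cite{vandenBan}, Lem.~4.4). Your outline is a reasonable roadmap of the actual proof in the literature, and its soft direction is essentially right (one quibble: the scalar by which the Casimir of $\g$ acts is $\langle\lambda,\lambda\rangle-\langle\rho,\rho\rangle$ plus a $\sigma$-dependent constant, which for complex $\lambda$ is not comparable to $1+|\lambda|^2+|\Lambda_\sigma|^2$; the standard argument uses $\Omega_\g$ together with the Casimir of $\kk$, and the preservation of $W$ under $\phi_\xi=\pi_\xi(f)$ follows from $W$ being closed and $G$-invariant, not from $\pi_\xi(f)$ ``commuting with $G$-morphisms''). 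But your deep direction is not a proof: it defers surjectivity-with-support-control wholesale to Arthur, Delorme and van den Ban--Souaifi, exactly the citations the paper itself relies on, so nothing is gained over the paper's treatment. The one substantive point your sketch overlooks, and which the paper does address, is the discrepancy between the statement as given here (minimal principal series only) and Delorme's theorem (all cuspidal parabolics): the reduction is not automatic and requires Casselman's subrepresentation theorem to see that the intertwining conditions along the minimal parabolic already control the full Fourier transform. Also, reducing the non-$K\times K$-finite statement to the $K\times K$-finite one is not just ``passing to isotypic components''; resumming the isotypic pieces into a smooth compactly supported function is precisely where the uniformity in $\Lambda_\sigma$ built into $(1.ii)_r$ is needed, and your sketch passes over this.
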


\begin{req}
Delorme formulated the Paley-Wiener Thm.~\ref{thm:Delorme1} in terms of all cuspidal parabolic subgroups.
By Casselman's subrepresentation theorem (e.g. \cite{Wallach}, Thm.~3.8.3.), it is clear that it remains true if we restrict to the minimal parabolic subgroup $P$ (compare \cite{vandenBan}, Lem.~4.4).)
\end{req}

\section{Fourier transforms for (distributional) sections and its properties} \label{sect:FT}
Let $(\tau,E_\tau)$ be a finite dimensional, not necessary irreducible, representation of $K$.
We obtain a homogeneous vector bundle $\E_\tau$ over $X$, whose space $C^\infty(X,\E_\tau)$ of smooth sections is identified with the following space:
$$C^\infty(X,\E_\tau) \cong \{ f:G \stackrel{C^\infty}{\longrightarrow} E_\tau \;|\; f(gk)= \tau^{-1}(k)(f(g)), \forall g \in, k \in K\}.$$
The group $G$ acts on $C^\infty(X,\E_\tau)$ by left translations, $(g \cdot f)(g')=f(g^{-1}g'), \forall g,g' \in G$. It is not difficult to see that we have the following $G$-isomorphisms:
$$C^{\infty}(X, \E_\tau) \cong  C^\infty(G,E_\tau)^K \cong [C^{ \infty}(G) \otimes E_\tau]^K.$$
Moreover, by taking the topological linear dual of $C^\infty(X,\E_\tau)$, we obtain the space of compactly supported distributional sections:
\begin{equation} \label{eq:Distspace}
C^{-\infty}_c(X,\E_{\tilde{\tau}})= \bigcup_{r\geq 0} C^{-\infty}_r(X,\E_\tau) :=\bigcup_{r \geq 0} \{T \in C^{-\infty}(X,\E_\tau)\;|\; supp(T) \in \overline{B}_r(o)\}=C^\infty(X,\E_\tau))',
\end{equation}
where $(\tilde{\tau},E_{\tilde{\tau}})$ is the dual of the representation $(\tau,E_\tau)$.

\subsubsection*{Fourier transform in (\textcolor{blue}{Level 2})}
We want to study the reduced Fourier transform $\mathcal{F}$ on the space $[C^{\pm \infty}_c(G) \otimes E_\tau]^K \cong C^{\pm}_c(X,\E_\tau)$ by
$$\sum_{i=1}^{d_\tau} f_i \otimes v_i \mapsto \sum_{i=1}^{d_\tau} \mathcal{F}(f_i) \otimes v_i, \;\;\; f\in C^{\pm \infty}_{c}(G),$$
where $d_\tau$ denotes the dimension of $E_\tau$ and $v_i, i\in\{1, \cdots, d_\tau\}$, is a basis of $E_\tau$.
Roughly, for $r> 0$, one can deduce from Thm.~\ref{thm:Delorme1}, that
$$C^\infty_{r}(X, \E_\tau) \cong [C^\infty_{r}(G) \otimes E_\tau]^K \stackrel{\text{Thm.~\ref{thm:Delorme1}}}{\cong} [PW_r(G) \otimes E_\tau]^K,$$
where $PW_r(G)$ is Delorme's Paley-Wiener space defined in (\ref{eq:PWDelorme}).
The goal is to make $[PW_r(G) \otimes E_\tau]^K$ more explicit and then do the same study for distributions.
For this, let us study the map
\begin{eqnarray*}
C^\infty_{r}(X, \E_\tau) \ni f &\mapsto& \sum_{i=1}^{d_\tau} f_i \otimes v_i \in [C^\infty_{r}(G) \otimes E_\tau]^K\\
&\stackrel{\text{Thm.~\ref{thm:Delorme1}}}{\mapsto}& \sum_{i=1}^{d_\tau} \mathcal{F}_{\sigma,\lambda}(f_i) \otimes v_i \in [\End(H_\infty^{\sigma}) \otimes E_\tau]^K \cong H^{\sigma}_\infty \otimes \Hom_K(H^{\sigma}_\infty,E_\tau).
\end{eqnarray*}
Bringing the Frobenius reciprocity into play, it gives us a better description of the space $\Hom_K(H^{\sigma}_\infty,E_\tau)$. Namely, we have
\begin{eqnarray} \label{eq:Frob}
\Hom_K(H^{\sigma}_\infty,E_\tau) &\stackrel{Frob}{\cong}& \Hom_M(E_\sigma,E_\tau) \text{ defined by} \nonumber\\
\langle Frob(S)w, \tilde{v} \rangle &=& \langle w, S^*\tilde{v}(e)\rangle, \;\;\;\;\; w \in E_\sigma, \;\tilde{v} \in E_{\tilde{\tau}}, \; S^*: E_{\tilde{\tau}} \rightarrow H^{\tilde{\sigma}}_\infty.
\end{eqnarray}
Let us next compute the inverse of $Frob$.
\begin{lem}[\cite{Martin}, Lem. 2.12]\label{lem:inverseFrob}
Let $s\in \emph\Hom_M(E_\sigma,E_\tau)$ and $f\in H^{\sigma}_{\infty}$. Then, we have
$$Frob^{-1}(s)(f)=\int_K \tau(k)sf(k)\; dk. \QEDA$$
\end{lem}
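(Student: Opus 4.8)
The plan is to check directly that the linear map
$$T\colon \Hom_M(E_\sigma,E_\tau)\longrightarrow \Hom_K(H^\sigma_\infty,E_\tau),\qquad T(s)(f):=\int_K \tau(k)\,s\,f(k)\;dk,$$
is well-defined (i.e. really lands in $\Hom_K(H^\sigma_\infty,E_\tau)$) and satisfies $Frob\circ T=\mathrm{id}_{\Hom_M(E_\sigma,E_\tau)}$. Since $Frob$ is a bijection by Frobenius reciprocity (\ref{eq:Frob}), composing with $Frob^{-1}$ on the left then forces $T=Frob^{-1}$, which is the claim.

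First I would verify that $T(s)\in\Hom_K(H^\sigma_\infty,E_\tau)$. The integrand $k\mapsto\tau(k)sf(k)$ is a smooth $E_\tau$-valued function on the compact group $K$, so $T(s)(f)\in E_\tau$ is well-defined and $f\mapsto T(s)(f)$ is continuous for the Fréchet topology of $H^\sigma_\infty$. For $K$-equivariance, recall from (\ref{eq:repK}) that for $k_0\in K$ one has $a(k_0^{-1}k)=e$ and $\kappa(k_0^{-1}k)=k_0^{-1}k$, so the $K$-action on the compact picture is the left regular one, $(\pi_{\sigma,\lambda}(k_0)\varphi)(k)=\varphi(k_0^{-1}k)$, independent of $\lambda$. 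The change of variables $k\mapsto k_0k$ then gives
$$T(s)\big(\pi_\sigma(k_0)f\big)=\int_K\tau(k)\,s\,f(k_0^{-1}k)\;dk=\int_K\tau(k_0k)\,s\,f(k)\;dk=\tau(k_0)\,T(s)(f).$$

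Next I would compute the transpose $T(s)^*\colon E_{\tilde\tau}\to H^{\tilde\sigma}_\infty$ with respect to the $L^2(K)$-pairing between $H^\sigma_\infty$ and $H^{\tilde\sigma}_\infty$ and the natural pairing on $E_\tau\times E_{\tilde\tau}$. Unwinding and using $\tau(k)^*=\tilde\tau(k)^{-1}$,
$$\langle T(s)f,\tilde v\rangle=\int_K\langle\tau(k)sf(k),\tilde v\rangle\;dk=\int_K\langle f(k),\,s^*\tilde\tau(k)^{-1}\tilde v\rangle\;dk,$$
so $(T(s)^*\tilde v)(k)=s^*\tilde\tau(k)^{-1}\tilde v$. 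Here one uses that $s\in\Hom_M(E_\sigma,E_\tau)$ implies $s^*\in\Hom_M(E_{\tilde\tau},E_{\tilde\sigma})$ (transpose of $s\sigma(m)=\tau(m)s$ together with $\sigma(m)^*=\tilde\sigma(m^{-1})$, $\tau(m)^*=\tilde\tau(m^{-1})$), which is exactly what makes $k\mapsto s^*\tilde\tau(k)^{-1}\tilde v$ satisfy the covariance $\varphi(km)=\tilde\sigma(m)^{-1}\varphi(k)$ defining $H^{\tilde\sigma}_\infty$. Evaluating at $k=e$ and inserting into (\ref{eq:Frob}) yields, for all $w\in E_\sigma$ and $\tilde v\in E_{\tilde\tau}$,
$$\langle Frob(T(s))\,w,\tilde v\rangle=\langle w,\,(T(s)^*\tilde v)(e)\rangle=\langle w,\,s^*\tilde v\rangle=\langle sw,\tilde v\rangle,$$
hence $Frob(T(s))=s$, and the lemma follows.

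I expect the only delicate point to be the bookkeeping of the several pairings and dual representations: which pairing is used on the induced spaces $H^\sigma_\infty$, $H^{\tilde\sigma}_\infty$, the identity $\sigma(m)^*=\tilde\sigma(m^{-1})$, and the correct transport of the $M$-covariance of $s$ to $s^*$. Everything else reduces to a single change of variables on the compact group $K$.
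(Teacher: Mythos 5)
Your proof is correct: the verification that $T(s)$ is $K$-equivariant, the computation $(T(s)^*\tilde v)(k)=s^*\tilde\tau(k)^{-1}\tilde v$, and the evaluation at $k=e$ giving $Frob(T(s))=s$ all check out against the paper's definition (\ref{eq:Frob}) of $Frob$ via the transpose $S^*\colon E_{\tilde\tau}\to H^{\tilde\sigma}_\infty$. The paper itself gives no proof (it only cites \cite{Martin}, Lem.~2.12), and your argument is exactly the standard direct verification one would expect there, with the bookkeeping of dual representations handled correctly.
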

\noindent
The dual of $Frob$ is given by
\begin{eqnarray} \label{eq:dualFrob}
\Hom_K(E_\tau,H^{\sigma}_\infty) &\stackrel{\widetilde{Frob}}{\cong}& \Hom_M(E_\tau,E_\sigma) \nonumber\\
\widetilde{Frob}(T)(v)&=&T(v)(e), \;\;\;\;\; v \in E_\tau
\end{eqnarray}
and for $t\in \Hom_M(E_\tau,E_\sigma)$ and $v\in E_\tau$, the inverse of $\widetilde{Frob}$ will be
\begin{equation} \label{eq:invFrobdual}
\widetilde{Frob}^{-1}(t)(v)(k)=t \tau(k^{-1})v.
\end{equation}

\noindent
Coming back to our previous computation, we get
\begin{eqnarray} \label{eq:variant1}
[\End(H_\infty^{\sigma}) \otimes E_\tau]^K 
\cong H^{\sigma}_\infty \otimes \Hom_K(H^{\sigma}_\infty,E_\tau)
&\stackrel{Frob}{\cong}& H^{\sigma}_\infty \otimes \Hom_M(E_\sigma,E_\tau) \nonumber \\
&\stackrel{(\ref{eq:Hsigmainfty})}{\cong}& C^\infty(K/M,E_\sigma \otimes \Hom_M(E_\sigma,E_\tau)) \nonumber \\
&\cong& C^\infty(K/M,\E_{\tau|_M}(\sigma)) \nonumber \\
&\cong& H^{\tau|_M(\sigma)}_\infty,
\end{eqnarray}
where 
$\E_{\tau|_M}(\sigma)$ is the $\sigma$-isotypic component of $\E_{\tau|_M}$.
Here, $\tau$ is restricted to $M$,  it is generally no more irreducible and splits into a finite direct sum
$\tau|_M=\bigoplus_{\sigma \in \hat{M}} m(\sigma,\tau)\sigma,$
where $ m(\sigma,\tau)=\dim(\Hom_M(E_\sigma,E_\tau)) \geq 0$ is the multiplicity of $\sigma$ in $\tau|_M$.
Now by taking the algebraic direct sum over all $\sigma \in \widehat{M}$, where only finitely many of them appears, we obtain
\begin{eqnarray*} 
\bigoplus_{\sigma \in \widehat{M}} [\End(H_\infty^{\sigma}) \otimes E_\tau]^K 
\stackrel{(\ref{eq:variant1})}{\cong} \bigoplus_{\sigma \in \widehat{M}} H^{\tau|_M(\sigma)}_\infty
\cong H^{\tau|_M}_\infty 
= \{f:K \stackrel{C^\infty}{\rightarrow} E_\tau \;|\; f(km)=\tau(m)^{-1}f(k)\},
\end{eqnarray*}
which can be viewed as the principal series representations corresponding to $\tau|_M$.

\begin{defn}[Fourier transform for sections over homogeneous vector bundles in (\textcolor{blue}{Level 2})] \label{defn:FTsect}
Let $g=\kappa(g)a(g)n(g) \in KAN=G$ be the Iwasawa decomposition.
For fixed $\lambda \in \aL^*_\C$ and $k\in K$, we define the function $e^\tau_{\lambda,k}$ by
\begin{eqnarray} \label{eq:exptau}
e^\tau_{\lambda,k}: G &\rightarrow& \emph\End(E_\tau) \cong E_{\tilde{\tau}} \otimes E_\tau  \nonumber \\
g &\mapsto& e^\tau_{\lambda,k}(g):=\tau(\kappa(g^{-1}k))^{-1} a(g^{-1}k)^{-(\lambda+\rho)}.
\end{eqnarray}
\begin{itemize}
\item[(a)] For $f\in C^\infty_c(X,\E_\tau)$, the Fourier transformation is given by
\begin{equation} \label{eq:FTtau}
\mathcal{F}_\tau f(\lambda,k)=\int_G e^\tau_{\lambda,k}(g) f(g) \;dg = \int_{G/K} e^\tau_{\lambda,k}(g) f(g) \;dg,
\end{equation}
where the last equality makes sense, since the integrand is right $K$-invariant.
\item[(b)] The Fourier transform for distributional section $T\in C^{-\infty}_c(X,\E_\tau)$ is defined by
$$\mathcal{F}_\tau T(\lambda,k):= \langle T, e^{\tau}_{\lambda, k} \rangle = T(e^\tau_{\lambda,k}) \in E_\tau, \;\;\; (\lambda, k) \in \aL^*_\C \times K/M.$$
\end{itemize}
\end{defn}

Note that the Fourier transform for sections has already been introduced and studied by Camporesi (\cite{Camporesi}, (3.18)). It is a direct generalization of Helgason's Fourier transform for $E_\tau=\C$.
It is not difficult to see that $\mathcal{F}_\tau f(\lambda,\cdot)$ and $\mathcal{F}_\tau T(\lambda,\cdot)$ are in $\Hol(\aL^*_\C, H^{\tau|_M}_\infty)$.
Observe that, for $k\in K$ and $g\in G$, we have, by definition
\begin{equation} \label{eq:expfctleft}
e^\tau_{\lambda,k}(g)=l_k(e^\tau_{\lambda,1}(g))=e^\tau_{\lambda,1}(k^{-1}g).
\end{equation}
This function $e^\tau_{\lambda,k}$ in Def.~\ref{defn:FTsect} can be seen as the analogous of the 'exponential' function in the definition of Fourier transform in the Euclidean case $\R^n$.
It has some interesting properties.
Note that for fixed $k\in K$, $e^\tau_{\lambda,k}(g)$ is an entire function on $\lambda \in \aL^*_\C$, since $a(g^{-1}k)^{-(\lambda+\rho)}$ is an entire function on $\lambda \in \aL^*_\C$.

\begin{prop}
Let $\tau \in \widehat{K}$, $\lambda \in \aL^*_\C$ and $k\in K$. Then, we have
\begin{equation} \label{eq:expfctgh}
e^\tau_{\lambda,k}(hg) = e^\tau_{\lambda,\kappa(h^{-1}k)}(g)a(h^{-1}k)^{-(\lambda+\rho)}, \;\;\; g,h\in G.
\end{equation}
\end{prop}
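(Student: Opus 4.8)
The plan is to verify the cocycle-type identity (\ref{eq:expfctgh}) directly from the definition (\ref{eq:exptau}) of $e^\tau_{\lambda,k}$, using only the multiplicative properties of the Iwasawa projections $\kappa$ and $a$. The key input is the standard fact that for $g,h\in G$ one has, from the Iwasawa decomposition $G=KAN$,
\begin{equation*}
\kappa(hg)=\kappa(h\,\kappa(g)), \qquad a(hg)=a(h\,\kappa(g))+a(g),
\end{equation*}
equivalently, writing everything multiplicatively with $e^{a(\cdot)}\in A$, that $a(hg)=a(h\kappa(g))\,e^{a(g)}$ as elements of $A$. I would record this as a preliminary observation (it follows at once from $hg = \kappa(h)e^{a(h)}n(h)g$ and iterating, or simply from uniqueness in $KAN$).

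First I would apply the definition to the left-hand side with the group element $hg$ replaced — more precisely, I want $e^\tau_{\lambda,k}(hg)$, so I need to analyse $(hg)^{-1}k = g^{-1}h^{-1}k$. Writing $h^{-1}k = \kappa(h^{-1}k)\,e^{a(h^{-1}k)}\,n(h^{-1}k)$, and noting that $k':=\kappa(h^{-1}k)\in K$ while the $AN$-part will be pushed through, I get
\begin{equation*}
g^{-1}h^{-1}k = g^{-1}\,\kappa(h^{-1}k)\,e^{a(h^{-1}k)}\,n(h^{-1}k).
\end{equation*}
Then, using the cocycle identities above on $g^{-1}\kappa(h^{-1}k)$ and the fact that right multiplication by the $AN$-factor $e^{a(h^{-1}k)}n(h^{-1}k)$ alters the $\kappa$-part trivially (it stays $\kappa(g^{-1}\kappa(h^{-1}k))$, since $AN$ is a group and $KAN$ is a unique decomposition) while it shifts the $a$-part by $a(h^{-1}k)$, I obtain
\begin{equation*}
\kappa\big((hg)^{-1}k\big)=\kappa\big(g^{-1}\kappa(h^{-1}k)\big), \qquad
a\big((hg)^{-1}k\big) = a\big(g^{-1}\kappa(h^{-1}k)\big) \, e^{a(h^{-1}k)}.
\end{equation*}

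Substituting these into (\ref{eq:exptau}), the $\tau$-factor becomes $\tau\big(\kappa(g^{-1}\kappa(h^{-1}k))\big)^{-1}$, which is exactly the $\tau$-factor of $e^\tau_{\lambda,\kappa(h^{-1}k)}(g)$; and the $A$-factor becomes $a\big(g^{-1}\kappa(h^{-1}k)\big)^{-(\lambda+\rho)}\,a(h^{-1}k)^{-(\lambda+\rho)}$, where the first piece is the $A$-factor of $e^\tau_{\lambda,\kappa(h^{-1}k)}(g)$ and the second is the claimed extra scalar $a(h^{-1}k)^{-(\lambda+\rho)}$ (here I use that $\mu\mapsto a^{\mu}$ is a homomorphism $A\to\C^\times$, so $(xy)^{-(\lambda+\rho)} = x^{-(\lambda+\rho)}y^{-(\lambda+\rho)}$). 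Collecting terms gives precisely (\ref{eq:expfctgh}). The only mildly delicate point — the "main obstacle", though it is routine — is bookkeeping the direction of the shifts in the Iwasawa cocycle and making sure the decomposition is applied on the correct side ($g^{-1}h^{-1}k$ versus $k^{-1}hg$); once the auxiliary identity for $\kappa(hg)$ and $a(hg)$ is stated cleanly, the computation is a direct substitution. As a sanity check one can specialise $h\in K$, where $\kappa(h^{-1}k)=h^{-1}k$, $a(h^{-1}k)=0$, and (\ref{eq:expfctgh}) reduces to (\ref{eq:expfctleft}).
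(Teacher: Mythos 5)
Your proof is correct and follows essentially the same route as the paper: both reduce the claim to the Iwasawa cocycle identities $\kappa(xy)=\kappa(x\kappa(y))$ and $a(xy)=a(x\kappa(y))\,a(y)$ (which you justify by noting that right multiplication by an $AN$-factor leaves $\kappa$ unchanged and multiplies the $A$-part), applied to $g^{-1}h^{-1}k = g^{-1}\cdot\kappa(h^{-1}k)\cdot e^{a(h^{-1}k)}n(h^{-1}k)$, followed by direct substitution into the definition of $e^\tau_{\lambda,k}$.
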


\begin{proof}
Let $h, g \in G=KAN$, then by Iwasawa decomposition, we have
\begin{eqnarray*}
hg= h \kappa(g)a(g)n(g)
&=&\kappa(h(\kappa(g))\; a(h\kappa(g))\; n(h\kappa(g)) \;a(g) \;n(g) \\
&=&\underbrace{\kappa(h\kappa(g))}_{\in K}\; \underbrace{a(h\kappa(g))\;a(g)}_{\in A}\; \underbrace{n(h\kappa(g))\;n(g)}_{\in N}.
\end{eqnarray*}
In other words, we have
$
\kappa(hg)=\kappa(h \kappa(g)a(g)n(g))=\kappa(h(\kappa(g)),$ and \\
$a(hg)=a(h \kappa(g)a(g)n(g))=a(h\kappa(g))\;a(g).$ Hence, 
\begin{eqnarray*}
e^\tau_{\lambda,k}(hg)
&\stackrel{(\ref{eq:exptau})}{=} &
\tau(\kappa(g^{-1}h^{-1}k))^{-1}a(g^{-1}h^{-1}k)^{-(\lambda + \rho)} \\
&=& \tau(\kappa(g^{-1}\kappa(h^{-1}k))^{-1}a(g^{-1}\kappa(h^{-1}k))^{-(\lambda+\rho)} a(h^{-1}k)^{-(\lambda+\rho)} \\
&\stackrel{(\ref{eq:exptau})}{=}&  e^\tau_{\lambda,\kappa(h^{-1}k)}(g)a(h^{-1}k)^{-(\lambda+\rho)}. \qedhere
\end{eqnarray*}
\end{proof}

\subsubsection*{Fourier transform in (\textcolor{blue}{Level 3}) and its properties}
Now consider an additional finite-dimensional $K$-representation
$\gamma: K \rightarrow GL(E_\gamma)$ with its associated homogeneous vector bundle $\E_\gamma$ over $X$.
It induces a mapping
\begin{equation} \label{eq:multispace}
\text{Hom}_K(E_\gamma, C^{\infty}_c(X,\E_\tau)) \longrightarrow \Hom_K(E_\gamma,\Hol(\aL^*_\C, H^{\tau|_M}_\infty)).
\end{equation}
The LHS of (\ref{eq:multispace}) can be identified with a space of functions with values in $\text{Hom}(E_\gamma, E_\tau)$, the $(\gamma,\tau)$-spherical functions:
\begin{eqnarray*}
\Hom_K(E_\gamma, C^{\infty}_c(X,\E_\tau)) 
&\cong& C^\infty_c(G,\gamma, \tau) \\
&:=&\{f : G \rightarrow \Hom(E_\gamma,E_{\tau}) \;|\; f(k_1gk_2)=\tau(k_2)^{-1}f(g)\gamma(k_1)^{-1}, \forall k_1,k_2 \in K\}.
\end{eqnarray*}
\noindent
For the RHS of (\ref{eq:multispace}), we use the Frobenius reciprocity between $K$ and $M$, by evaluating at $k=1$, and we obtain the space of functions
$\{\phi:\aL_\C^* \rightarrow \text{Hom}_M(E_\gamma,E_\tau)\}.$
Now we define the Fourier transformation $\prescript{}{\gamma}{\mathcal{F}}_\tau$ of $f \in C^{\infty}_c(G,\gamma, \tau)$.

\begin{defn}[Fourier transform in (\textcolor{blue}{Level 3})] \label{defn:FTLevel3}
With the previous notations, the Fourier transformation for $f \in C^\infty_c(G,\gamma, \tau)$ is given by
\begin{equation} \label{eq:FTgammatau3}
\prescript{}{\gamma}{\mathcal{F}}_\tau f(\lambda):= \int_G e^\tau_{\lambda,1}(g) f(g) \;dg, \;\;\;\; \lambda \in \aL^*_\C.
\end{equation}
Similar, the Fourier transformation for distributional function $T \in C^{-\infty}_c(G,\gamma, \tau)$ is defined by
$$ \prescript{}{\gamma}{\mathcal{F}}_\tau T(\lambda):= \langle T, e^\tau_{\lambda,1} \rangle.$$
\end{defn}
\noindent
Observe that
$$\tau(m) \mathcal{F}_\gamma f(\lambda)
= \int_G e^\tau_{\lambda,1}(mg) f(g) \; dg
= \int_G e^\tau_{\lambda,1}(g) f(m^{-1}g) \;dg
= \prescript{}{\gamma}{\mathcal{F}}_\tau f(\lambda) \gamma(m) \in \Hom_{M}(E_\gamma, E_\tau),$$
 same for the distributions.
Let us consider now the convolution $G$ of $f \in C^{\infty}_c(X,\E_\gamma)$ to a ($\gamma,\tau)$-spherical function $\varphi \in C^\infty_c(G,\gamma, \tau)$, which is defined by
\begin{equation} \label{eq:convusual}
(f * \varphi)(g):=\int_G \varphi(x^{-1}g)f(x) \;dx= \int_G \varphi(xg) f(x^{-1}) \; dx, \;\;\;\; g\in G.
\end{equation}
By considering the corresponding Fourier transform, we obtain the following result, which is analogous as Lem. 1.4. in (\cite{Helgason3}, Chap. 3).

\begin{prop} \label{prop:FTconv}
With the notations above, we then have that 
\begin{equation*} \label{eq:convFT}
\mathcal{F}_\tau(f * \varphi)(\lambda,k)=\prescript{}{\gamma}{\mathcal{F}}_{\tau}\varphi(\lambda) \mathcal{F}_\gamma f(\lambda,k), \;\;\; \lambda \in \aL^*_\C,k\in K.
\end{equation*}
\end{prop}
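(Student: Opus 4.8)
The statement is a ``convolution theorem'', so I would prove it by the direct computation that mirrors Lemma~1.4 in \cite{Helgason3}, Chap.~3, the only new ingredient being the cocycle identity (\ref{eq:expfctgh}) for the vector-valued exponential. First I would unfold both sides: by Definition~\ref{defn:FTsect}(a) and the convolution formula (\ref{eq:convusual}),
$$\mathcal{F}_\tau(f*\varphi)(\lambda,k)=\int_G e^\tau_{\lambda,k}(g)\Big(\int_G \varphi(x^{-1}g)f(x)\,dx\Big)\,dg .$$
Since $f$ and $\varphi$ are compactly supported and smooth and $g\mapsto e^\tau_{\lambda,k}(g)$ is continuous, Fubini applies and I may interchange the integrals. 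For fixed $x$, I then substitute $g\mapsto xg$ in the inner integral (left-invariance of Haar measure, no unimodularity needed here), which turns $\varphi(x^{-1}g)$ into $\varphi(g)$ and yields
$$\mathcal{F}_\tau(f*\varphi)(\lambda,k)=\int_G\Big(\int_G e^\tau_{\lambda,k}(xg)\,\varphi(g)\,dg\Big)f(x)\,dx .$$

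Next I would apply the key identity (\ref{eq:expfctgh}) with $h=x$, namely $e^\tau_{\lambda,k}(xg)=e^\tau_{\lambda,\kappa(x^{-1}k)}(g)\,a(x^{-1}k)^{-(\lambda+\rho)}$, and pull the scalar factor $a(x^{-1}k)^{-(\lambda+\rho)}$ out of the $g$-integral. It then remains to evaluate, for a fixed $k'=\kappa(x^{-1}k)\in K$, the integral $\int_G e^\tau_{\lambda,k'}(g)\,\varphi(g)\,dg$. Here I would use (\ref{eq:expfctleft}), i.e. $e^\tau_{\lambda,k'}(g)=e^\tau_{\lambda,1}(k'^{-1}g)$, substitute $g\mapsto k'g$, and invoke the left $K$-equivariance of $\varphi\in C^\infty_c(G,\gamma,\tau)$, i.e. $\varphi(k'g)=\varphi(g)\gamma(k')^{-1}$, to get
$$\int_G e^\tau_{\lambda,k'}(g)\,\varphi(g)\,dg=\Big(\int_G e^\tau_{\lambda,1}(g)\varphi(g)\,dg\Big)\gamma(k')^{-1}=\prescript{}{\gamma}{\mathcal{F}}_{\tau}\varphi(\lambda)\,\gamma(\kappa(x^{-1}k))^{-1},$$
the last equality being Definition~\ref{defn:FTLevel3}.

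Putting the pieces together, $\prescript{}{\gamma}{\mathcal{F}}_{\tau}\varphi(\lambda)$ is independent of $x$ and so factors out of the outer integral, leaving
$$\mathcal{F}_\tau(f*\varphi)(\lambda,k)=\prescript{}{\gamma}{\mathcal{F}}_{\tau}\varphi(\lambda)\int_G \gamma(\kappa(x^{-1}k))^{-1}a(x^{-1}k)^{-(\lambda+\rho)}f(x)\,dx=\prescript{}{\gamma}{\mathcal{F}}_{\tau}\varphi(\lambda)\,\mathcal{F}_\gamma f(\lambda,k),$$
since the integrand of the last integral is precisely $e^\gamma_{\lambda,k}(x)f(x)$ by (\ref{eq:exptau}) applied to $\gamma$. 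I do not expect any serious obstacle: the argument is bookkeeping of left/right multiplications (making sure $\End(E_\tau)$, $\Hom(E_\gamma,E_\tau)$ and $E_\gamma$ compose in the right order and that the scalar factors commute through) together with two elementary changes of variable; the only point requiring a little care is consistently tracking that the coupling between the two $G$-integrals is carried entirely by the $K$-variable $\kappa(x^{-1}k)$, which is then absorbed by the left $\gamma$-equivariance of $\varphi$ and reassembled into $e^\gamma_{\lambda,k}$. The same computation goes through verbatim for the distributional version, replacing the integrals against $f$ by the pairing with $T$.
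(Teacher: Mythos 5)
Your proposal is correct and follows essentially the same route as the paper's proof: Fubini, the change of variable $g\mapsto xg$, the cocycle identity (\ref{eq:expfctgh}), and then (\ref{eq:expfctleft}) together with the left $K$-equivariance $\varphi(k'g)=\varphi(g)\gamma(k')^{-1}$ to peel off $\prescript{}{\gamma}{\mathcal{F}}_{\tau}\varphi(\lambda)$ and reassemble the remaining integral into $\mathcal{F}_\gamma f(\lambda,k)$. No gaps; if anything, your bookkeeping of the $K$-variable is slightly cleaner than the paper's intermediate step.
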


\begin{proof}
For $(\lambda,k) \in \aL^*_\C \times K$, we compute
\begin{eqnarray*}
\mathcal{F}_\tau(f* \varphi)(\lambda,k)
&\stackrel{(\ref{eq:convusual})}{=}& \int_{G \times G} e^\tau_{\lambda,k}(g) \varphi(\underbrace{x^{-1}g}_{=:h})f(x) \; dx\;dg \\
&\stackrel{\text{Fubini's thm.}}{=}& \int_{G} \Big( \int_G e^\tau_{\lambda,k}(xh) \varphi(h) \;dh \Big) f(x) \; dx \\
&\stackrel{(\ref{eq:exptau})}{=}&  \int_G \Big(  \int_G e^\tau_{\lambda,\kappa(x^{-1}k)}(h) a(x^{-1}k)^{-(\lambda+\rho)} \varphi(h) \;dh \Big) f(x) \; dx \\
&\stackrel{(\ref{eq:expfctleft})}{=}&  \int_G \Big(  \int_G e^\tau_{\lambda,1}(\underbrace{\kappa(x^{-1}k)^{-1}h}_{=:g})  \varphi(h) \;dh \Big)a(x^{-1}k)^{-(\lambda+\rho)} f(x) \; dx \\
&=& \int_G \Big(  \int_G e^{\tau}_{\lambda,1}(g) \textcolor{blue}{\varphi(\kappa(g^{-1}k)g)} \; dg \Big) a(x^{-1}k)^{-(\lambda + \rho)} f(x) \; dx\\
&=& \int_G \Big( \int_G e^{\tau}_{\lambda,1}(g) \textcolor{blue}{\varphi(g)} \;dg \Big) \textcolor{blue}{\gamma(\kappa(x^{-1}k))^{-1}} a(x^{-1}k)^{-(\lambda+\rho)} f(x) \; dx \\
&=&\prescript{}{\gamma}{\mathcal{F}}_{\tau}\varphi(\lambda) \mathcal{F}_\gamma f(\lambda,k).
\qedhere
\end{eqnarray*}
\end{proof}

\begin{req} \label{req:leftconv}
\begin{itemize}
\item[(a)] If $\gamma = \tau$, then we have
$\mathcal{F}_\tau(f * \varphi)(\lambda,k)=\prescript{}{\tau}{\mathcal{F}}_{\tau}\varphi(\lambda) \mathcal{F}_\tau f(\lambda,k),$
for $f\in C^{\infty}_c(X,\E_\tau)$ and a spherical function $\varphi \in C^\infty_c(G,\tau,\tau)$.
\item[(b)]
In a smiliar way, one can define the left convolution for scalar valued-function $\varphi \in C^\infty_c(G)$. In fact, we know that, for $f\in C_c^\infty(X,\E_\tau)$ and $g\in G$, we have
\begin{eqnarray*}
\mathcal{F}_\tau(l_g f)(\lambda,k)
= \int_G e^\tau_{\lambda,k}(x) l_g f(x) \; dx
&=& \int_G e^\tau_{\lambda,k}(gh) f(h) \; dh \\
&\stackrel{(\ref{eq:expfctgh})}{=}& a(g^{-1}k)^{-(\lambda + \rho)} \int_G e^\tau_{\lambda,\kappa(g^{-1}k)}(h)  f(h) \; dh\\
&\stackrel{(\ref{eq:repK})}{=}&(\pi_{\tau,\lambda}(g) {\mathcal{F}}_{\tau}f(\lambda, \cdot))(k).
\end{eqnarray*}
Hence, we can deduce for $\varphi \in C_c^\infty(G)$:
\begin{equation} \label{eq:leftconv}
\mathcal{F}_\tau(\varphi * f)(\lambda,k)= 
(\pi_{\tau,\lambda}(\varphi) {\mathcal{F}}_{\tau}f(\lambda, \cdot))(k).
\end{equation}
\item[(c)] Analogously as for smooth compactly functions (\ref{eq:convusual}), we define the convolution for distributions $T \in C^{-\infty}_c(X,\E_\tau)$ by
\begin{equation*} \label{eq:convdist}
(T * \varphi)(g):= T(l_g \varphi^\vee)= \langle T, l_g \varphi^\vee \rangle, \;\;\; g \in G, \varphi \in C^\infty_c(G, \tau,\tau),
\end{equation*}
where $\varphi^\vee \in C^\infty_c(X,\E_{\tilde{\tau}}) \otimes E_\tau$ is given by $\varphi^{\vee}(g):=\varphi(g^{-1}), g \in G$. Then, the obtained results can be applied for distributions as well.
\end{itemize}
\end{req}

\noindent
Now, for positive $\epsilon>0$, take a $K$-conjugation invariant open neighbourhood $U_\epsilon \subset B_\epsilon(0)$ so that $\bigcap_{\epsilon > 0} U_\epsilon = \{0\}$,
and for $\epsilon_1 < \epsilon_2$, we have $U_{\epsilon_1} \subset U_{\epsilon_2}.$
Consider a scalar-valued positive function $\tilde{\eta}_{\epsilon} \in C^\infty_c(U_\epsilon) \subset C^\infty_c(G)$ in $G$ satisfying
\begin{equation} \label{eq:epsilonUe}
\int_{U_\epsilon} \tilde{\eta}_{\epsilon}(g) \; dg=1.
\end{equation}
Note that $\tilde{\eta}_{\epsilon}$ cannot be $K \times K$-invariant. Let us construct from this an endomorphism function $\eta_\epsilon \in C^\infty(G,\tau, \tau)$ by
\begin{equation} \label{eq:etag}
\eta_\epsilon(g):= \int_{K\times K} \tilde{\eta}_{\epsilon}(k_1 g k_2) \tau(k_1 k_2) \; dk_1 \;dk_2, \;\;\; g \in G.
\end{equation}
Then, we get the following observation.

\begin{cor} \label{cor:FTconv}
For each $\epsilon > 0$, let $\eta_\epsilon \in C_c^\infty(G,\tau, \tau)$ be the $K \times K$-invariant endomorphism function (\ref{eq:etag}).
Then, its Fourier transform $\prescript{}{\tau}{\mathcal{F}}_{\tau}\eta_\epsilon$ converges uniformly on compact sets $C$ on $\aL^*_\C$ to the identity map:
$$\prescript{}{\tau}{\mathcal{F}}_{\tau}\eta_\epsilon(\lambda) \rightarrow \emph \Id, \;\;\; \lambda \in C$$
when $\epsilon \rightarrow 0$.
\end{cor}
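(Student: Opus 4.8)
The plan is to reduce the claim to the scalar-valued approximate identity $\tilde\eta_\epsilon$ via the formula for $\eta_\epsilon$, and then to use the convolution identity of Proposition~\ref{prop:FTconv} together with the explicit shape of $e^\tau_{\lambda,1}$. First I would recall that $\prescript{}{\tau}{\mathcal{F}}_{\tau}\eta_\epsilon(\lambda)=\int_G e^\tau_{\lambda,1}(g)\,\eta_\epsilon(g)\,dg$, and substitute the definition \eqref{eq:etag} of $\eta_\epsilon$. Using the $K$-invariance of Haar measure and the transformation rule \eqref{eq:expfctgh} (or just \eqref{eq:exptau}) to move the $k_1$ and $k_2$ through $e^\tau_{\lambda,1}$, one rewrites $\prescript{}{\tau}{\mathcal{F}}_{\tau}\eta_\epsilon(\lambda)$ as an integral of $\tilde\eta_\epsilon(g)$ against an $\End(E_\tau)$-valued kernel built from $e^\tau_{\lambda,1}$, with the $\tau(k_1k_2)$ factors absorbing precisely the $\tau(\kappa(\cdot))$ twists so that at $g=e$ the kernel is $e^\tau_{\lambda,1}(e)=\tau(\kappa(e))^{-1}a(e)^{-(\lambda+\rho)}=\Id$.

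Next I would estimate the difference $\prescript{}{\tau}{\mathcal{F}}_{\tau}\eta_\epsilon(\lambda)-\Id$ as an integral over $U_\epsilon$ of $\tilde\eta_\epsilon(g)$ times (kernel at $g$) $-$ (kernel at $e$). Since $\tilde\eta_\epsilon\ge 0$ has total mass $1$ and is supported in $U_\epsilon\subset B_\epsilon(0)$ with $\bigcap_\epsilon U_\epsilon=\{0\}$, this difference is bounded by the supremum over $g\in U_\epsilon$ of the operator norm of (kernel at $g$) $-$ $\Id$. The kernel is, for each fixed $\lambda$, a continuous (indeed real-analytic) $\End(E_\tau)$-valued function of $g$ in a neighbourhood of $e$, because $g\mapsto\kappa(g^{-1}k)$, $g\mapsto a(g^{-1}k)$ are smooth and $\tau$, the exponential, and integration over $K\times K$ are continuous; hence as $\epsilon\to 0$ this supremum tends to $0$ pointwise in $\lambda$. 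To upgrade pointwise convergence to uniform convergence on a compact set $C\subset\aL^*_\C$, I would note that the kernel depends jointly continuously on $(\lambda,g)$, so on the compact set $C\times\overline{U_{\epsilon_0}}$ (for some fixed $\epsilon_0$) it is uniformly continuous; combined with $\overline{U_\epsilon}\to\{0\}$ this gives $\sup_{\lambda\in C}\sup_{g\in U_\epsilon}\|(\text{kernel at }g)-\Id\|\to 0$.

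The main obstacle — really the only point requiring care — is the bookkeeping in the first step: verifying that the $\tau(k_1k_2)$ weighting in \eqref{eq:etag} is exactly what is needed so that the $k_1,k_2$-integration leaves behind $e^\tau_{\lambda,1}$ evaluated without residual $K$-twists, i.e.\ that the resulting kernel genuinely equals $\Id$ at $g=e$ and not merely some $\lambda$-independent unitary. Concretely one must check that for $g$ near $e$ the map $(k_1,k_2)\mapsto \tau(\kappa(g^{-1}k_1^{-1}k_2^{-1}\cdots))$-type expressions integrate against $\tau(k_1k_2)$ to give $\tau$ of something that collapses to the identity when $\kappa$ is trivial; this is a direct computation using $\kappa(e)=e$, $a(e)=e$ and the cocycle identities for $\kappa$ and $a$. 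Once this identity is in place, the convergence statement follows from the elementary approximate-identity argument sketched above, with the uniformity on compacts coming for free from joint continuity of $(\lambda,g)\mapsto e^\tau_{\lambda,1}(g)$ (entire in $\lambda$, smooth in $g$).
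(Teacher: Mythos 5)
Your argument is correct and is essentially the paper's own proof: the paper likewise reduces $\prescript{}{\tau}{\mathcal{F}}_{\tau}\eta_\epsilon(\lambda)$ to $\int_{U_\epsilon}\overline{\eta}_\epsilon(g)\,e^\tau_{\lambda,1}(g)\,dg$ with $\overline{\eta}_\epsilon\geq 0$ of total mass one (the conjugation part of the $K\times K$-integral is absorbed into $\overline{\eta}_\epsilon(g)=\int_K\tilde{\eta}_\epsilon(kgk^{-1})\,dk$, which is still supported in the $K$-conjugation invariant set $U_\epsilon$, and the right-translation part is integrated out via the identity $e^\tau_{\lambda,1}(gl)=e^\tau_{\lambda,1}(g)\tau(l)$ from (\ref{eq:7})), after which it writes the difference as $\int_{U_\epsilon}\overline{\eta}_\epsilon(g)\bigl(e^\tau_{\lambda,1}(g)-\Id\bigr)\,dg$ and concludes exactly as you do, using $e^\tau_{\lambda,1}(e)=\Id$ and joint continuity in $(\lambda,g)$ for uniformity on compact $\lambda$-sets. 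The bookkeeping you flag as the one delicate point is thus settled by (\ref{eq:7}) together with the conjugation-invariance of $U_\epsilon$, and your sketch fills in correctly.
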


\begin{proof}
Consider $\eta_\epsilon \in C^\infty(G,\tau, \tau)$, then for $g\in G$:
\begin{eqnarray*}
\eta_\epsilon(g)= \int_{K} \int_{K} \tilde{\eta}_{\epsilon}(k_1 g k_2) \tau(k_1 k_2) \; dk_1 \;dk_2
= \int_K \int_K \tilde{\eta}_{\epsilon}(k_1 g l k_1^{-1}) \tau(l)\; dk_1\; dl
=\int_K \overline{\eta}_\epsilon (gl) \tau(l) \; dl,
\end{eqnarray*}
where we did a change of variable and set $\overline{\eta}_\epsilon(g):= \int_K \tilde{\eta}_\epsilon(k_1 gk_1^{-1}) \; dk_1$.
Here, $\tilde{\eta}_\epsilon \in C^\infty_c(U_\epsilon)$ as above (\ref{eq:epsilonUe}).
By computing its Fourier transform, we obtain, for $\lambda \in \aL^*_\C$
\begin{eqnarray*}
\prescript{}{\tau}{\mathcal{F}}_{\tau}(\eta_\epsilon)(\lambda)
\stackrel{(\ref{eq:FTgammatau3})}{=} \int_G e^\tau_{\lambda,1}(g) \eta_\epsilon(g) \;dg
&=&  \int_G \Big(\int_{K} e^\tau_{\lambda,1}(g) \overline{\eta}_\epsilon (gl) \tau(l) \; dl \Big)\;dg \\
&\stackrel{(\ref{eq:7})}{=}& \int_G \Big(\int_K e^\tau_{\lambda,1}(gl)\overline{\eta}_\epsilon (gl)\; dl \Big)\;dg \\
&=& \int_G e^\tau_{\lambda,1}(g)\overline{\eta}_\epsilon (g) \; dg\\
&=& \int_{U_\epsilon} \overline{\eta}_\epsilon(g) (e^\tau_{\lambda,1}(g) -\Id) dg +\Id.
\end{eqnarray*}
Now, consider a compact set $C$ on $\aL^*_\C$ and $\delta >0$, then there exists $\epsilon >0$ such that
\begin{center}
$|e^\tau_{\lambda,1}(g)-\Id| < \delta$ for $g\in U_\epsilon, \lambda \in C$.
\end{center}
Thus, this implies that $\prescript{}{\tau}{\mathcal{F}}_{\tau}\eta_\epsilon$ converges uniformly on compact sets to $\text{Id}$, when $\epsilon$ converges to~$0$.
\end{proof}

Furthermore, consider an non-zero linear $G$-invariant differential operator between sections over homogeneous vector bundles
\begin{equation} \label{eq:invDO}
D: C^\infty(X,\E_\tau) \longrightarrow C^\infty(X,\E_\gamma)
\end{equation}
such that $D(g \cdot f)=g \cdot (Df),$ for all $g\in G, f \in C^\infty(X,\E_\tau)$.
Denote by $\DD_G(\E_\tau,\E_\gamma)$ the vector space of all these $G$-invariant differential operators on sections. We get the following relation.

\begin{prop} \label{prop:Qexpfct}
Let $Q \in \DD_G(\E_{\tilde{\tau}}, \E_{\tilde{\gamma}})$ be an invariant linear differential operator. 
Then, we have
\begin{equation} \label{eq:step2}
Q e^\tau_{\lambda,k} =(Qe^\tau_{\lambda,1}(1)) \circ e^\gamma_{\lambda,k}, \;\;\; \lambda \in \aL^*_\C, k \in K.
\end{equation}
\end{prop}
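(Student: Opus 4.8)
The plan is to exploit the fact that $e^\tau_{\lambda,k}$ is, up to an explicit cocycle factor, a left translate of $e^\tau_{\lambda,1}$, and that $Q$ is $G$-invariant, so that applying $Q$ and translating commute. First I would recall from \eqref{eq:expfctleft} that $e^\tau_{\lambda,k}(g)=e^\tau_{\lambda,1}(k^{-1}g)=(l_k e^\tau_{\lambda,1})(g)$, i.e. $e^\tau_{\lambda,k}=l_k\,e^\tau_{\lambda,1}$. Here one must be slightly careful: $e^\tau_{\lambda,1}$ is $\End(E_\tau)$-valued and what really plays the role of a section of $\E_{\tilde\tau}$ (to which $Q\in\DD_G(\E_{\tilde\tau},\E_{\tilde\gamma})$ applies) is $e^\tau_{\lambda,1}$ read as a function $G\to E_{\tilde\tau}\otimes E_\tau$, with $Q$ acting in the $E_{\tilde\tau}$-factor and the $E_\tau$-factor inert. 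With that reading, $Q$ commutes with $l_k$ by $G$-invariance, so $Q e^\tau_{\lambda,k}=Q(l_k e^\tau_{\lambda,1})=l_k(Q e^\tau_{\lambda,1})$, and hence $(Qe^\tau_{\lambda,k})(g)=(Qe^\tau_{\lambda,1})(k^{-1}g)$.

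Next I would analyze $(Qe^\tau_{\lambda,1})(g)$ for general $g\in G$ using the Iwasawa decomposition $g=\kappa(g)a(g)n(g)$. The transformation rule \eqref{eq:expfctgh} gives $e^\tau_{\lambda,1}(hg')=e^\tau_{\lambda,\kappa(h^{-1})}(g')\,a(h^{-1})^{-(\lambda+\rho)}$ for the $h=1$ base point, but the cleaner route is: since $e^\tau_{\lambda,1}$ transforms on the right like a section of $\E_{\tilde\tau,\lambda}$ (a principal-series vector) and $Q$ is $G$-invariant, $Qe^\tau_{\lambda,1}$ transforms the same way; more precisely, the equivariance $e^\tau_{\lambda,1}(gman)=\gamma(m)^{-1}a^{-(\lambda+\rho)}$-type rule on the $\gamma$-side lets one reduce evaluation at $g$ to evaluation at $\kappa(g)\in K$, picking up the scalar $a(g)^{-(\lambda+\rho)}$. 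Combining with the previous paragraph, $(Qe^\tau_{\lambda,k})(g)=(Qe^\tau_{\lambda,1})(k^{-1}g)$, and writing $k^{-1}g$ in Iwasawa form with $\kappa(k^{-1}g)$ and $a(k^{-1}g)$, one gets $(Qe^\tau_{\lambda,k})(g)=(Qe^\tau_{\lambda,1})(\kappa(k^{-1}g))\,a(k^{-1}g)^{-(\lambda+\rho)}$. Finally, $K$-equivariance of $Qe^\tau_{\lambda,1}$ on the left, together with $\kappa(k^{-1}g)=\kappa\big(k^{-1}\kappa(g)\big)$ and one more application of the cocycle identity, should let me pull the dependence on $g$ entirely into the factor $e^\gamma_{\lambda,k}(g)=\tau\big(\kappa(g^{-1}k)\big)^{-1}a(g^{-1}k)^{-(\lambda+\rho)}$ — wait, on the $\gamma$-side — leaving the constant $Qe^\tau_{\lambda,1}(1)$ acting on the left. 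Matching the bookkeeping of which variable sits where yields exactly $Qe^\tau_{\lambda,k}=(Qe^\tau_{\lambda,1}(1))\circ e^\gamma_{\lambda,k}$.

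Concretely, the cleanest order of steps is: (i) reduce to $k=1$ via $G$-invariance and \eqref{eq:expfctleft}; (ii) show $Qe^\tau_{\lambda,1}$ inherits the right-$P$-equivariance of $e^\tau_{\lambda,1}$, so $(Qe^\tau_{\lambda,1})(g)=(Qe^\tau_{\lambda,1})(\kappa(g))a(g)^{-(\lambda+\rho)}$; (iii) show it also inherits the left-$K$-equivariance appropriate to $\DD_G(\E_{\tilde\tau},\E_{\tilde\gamma})$, so $(Qe^\tau_{\lambda,1})(\kappa(g))$ is $\gamma\big(\kappa(g)\big)^{-1}$ applied to the constant $(Qe^\tau_{\lambda,1})(1)$; (iv) reassemble, using \eqref{eq:exptau} for $e^\gamma_{\lambda,k}$ and the Iwasawa cocycle identities from the proof of \eqref{eq:expfctgh}, to recognize the right-hand side.

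The main obstacle I anticipate is purely notational rather than conceptual: keeping straight the two vector-bundle sides ($\E_{\tilde\tau}\to\E_{\tilde\gamma}$ for $Q$ versus $\E_\tau$, $\E_\gamma$ for the exponential kernels), the fact that $e^\tau_{\lambda,k}$ is $\End(E_\tau)$-valued so "$Q$ acts on it" must be interpreted as acting in one tensor factor only, and the bookkeeping of inverses in the arguments $g^{-1}k$ versus $k^{-1}g$. The genuinely substantive input — that a $G$-invariant differential operator commutes with left translations and therefore with evaluation-at-$\kappa$ up to the scalar Iwasawa factor — is immediate from the definition of $\DD_G(\E_\tau,\E_\gamma)$; everything else is the identities \eqref{eq:expfctleft}, \eqref{eq:expfctgh} and the Iwasawa computations already carried out in this section.
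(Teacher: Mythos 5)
Your overall skeleton (reduce to $k=1$ via $e^\tau_{\lambda,k}=l_k e^\tau_{\lambda,1}$ and the commutation of $Q$ with left translations, then show that $Qe^\tau_{\lambda,1}$ inherits the two transformation properties that pin down $e^\gamma_{\lambda,1}$ up to its value at $1$) is exactly the paper's. But the two properties you invoke in steps (ii) and (iii) are not the ones $e^\tau_{\lambda,1}$ actually has, and one of them already fails for $Q=\Id$. Because the kernel \eqref{eq:exptau} is built from the Iwasawa data of $g^{-1}k$ rather than of $g$, the left/right roles are the reverse of what you write: $e^\tau_{\lambda,1}$ is \emph{right} $K$-equivariant, $e^\tau_{\lambda,1}(gk_1)=e^\tau_{\lambda,1}(g)\tau(k_1)$ (it is a section of $\E_{\tilde{\tau}}$ tensored with the inert factor $E_\tau$), and it is a \emph{left} $NA$-eigenfunction, $e^\tau_{\lambda,1}(n_1a_1g)=a_1^{\lambda+\rho}e^\tau_{\lambda,1}(g)$, with exponent $+(\lambda+\rho)$, cf.\ \eqref{eq:7}. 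Your step (ii), $(Qe^\tau_{\lambda,1})(g)=(Qe^\tau_{\lambda,1})(\kappa(g))\,a(g)^{-(\lambda+\rho)}$, is false: for $Q=\Id$ and $g=a\in A$ it gives $a^{-(\lambda+\rho)}$, whereas $e^\tau_{\lambda,1}(a)=a^{\lambda+\rho}$ by \eqref{eq:7}. There is no right $P$- (or right $AN$-) equivariance to inherit, and no left $K$-equivariance either; carried through, your steps would produce $(Qe^\tau_{\lambda,1}(1))$ composed with the function $g\mapsto\gamma(\kappa(g))^{-1}a(g)^{-(\lambda+\rho)}$, which is not $e^\gamma_{\lambda,1}(g)=\gamma(\kappa(g^{-1}))^{-1}a(g^{-1})^{-(\lambda+\rho)}$. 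So the argument as written does not reach \eqref{eq:step2}.

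The related point is where the hypothesis on $Q$ enters. $G$-invariance means $Q$ commutes with \emph{left} translations, so it can only transfer the left transformation behaviour of $e^\tau_{\lambda,1}$ to $Qe^\tau_{\lambda,1}$; it cannot be used, as in your (ii), to propagate a right-$P$ rule (a differential operator does not commute with right translations by elements of $AN$, and sections of $\E_{\tilde{\tau}}$ carry no such equivariance anyway). The correct division of labour, which is the paper's proof, is: (a) from $l_{(n_1a_1)^{-1}}e^\tau_{\lambda,1}=a_1^{\lambda+\rho}e^\tau_{\lambda,1}$ and $Q\circ l_h=l_h\circ Q$ deduce $Qe^\tau_{\lambda,1}(n_1a_1)=a_1^{\lambda+\rho}\,Qe^\tau_{\lambda,1}(1)$, as in \eqref{eq:Q}; (b) the right $K$-equivariance $Qe^\tau_{\lambda,1}(gk_2)=Qe^\tau_{\lambda,1}(g)\gamma(k_2)$ costs nothing, being just the statement that $Qe^\tau_{\lambda,1}\in C^\infty(X,\E_{\tilde{\gamma}})\otimes E_\tau$; (c) combine (a) and (b) along $G=NAK$ and compare with \eqref{eq:7} for $\gamma$ to get $Qe^\tau_{\lambda,1}=(Qe^\tau_{\lambda,1}(1))\circ e^\gamma_{\lambda,1}$, and only then apply $l_k$. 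The gap is therefore not mere bookkeeping of inverses: you must replace the (false) right-$P$-equivariance by the left $NA$-eigenfunction property, which is precisely the step that uses the invariance of $Q$.
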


\begin{proof} Let us first consider the case $k=1$. We then have for $g\in G=NAK$:
\begin{equation} \label{eq:7}
e^\tau_{\lambda,1}(g)=e^\tau_{\lambda,1}(nak_1)=a^{\lambda+\rho} \tau (k_1)=a^{\lambda+\rho} e^\tau_{\lambda,1}(k_1), \;\;\; n \in N, a \in A, k_1 \in K.
\end{equation}
In particular, for $n_1a_1 \in NA$
\begin{eqnarray*}
l_{(n_1 a_1)^{-1}} e^\tau_{\lambda,1}(nak_1)
=e^\tau_{\lambda,1}(n_1a_1nak_1)
=e^\tau_{\lambda,1}(n_1(a_1 na_1^{-1})a_1 ak_1)
&\stackrel{(\ref{eq:7})}{=}& a_1^{\lambda+\rho}a^{\lambda+\rho}\tau(k_1) \\
&=& a_1^{\lambda+\rho} e^\tau_{\lambda,1}(g).
\end{eqnarray*}
Hence, since $Q$ is linear and $G$-invariant, we obtain that
\begin{equation} \label{eq:Q12}
 l_{(n_1a_1)^{-1}} (Qe^\tau_{\lambda,1}(g))=Q( l_{(n_1a_1)^{-1}} e^\tau_{\lambda,1}(g))=Q(a_1^{\lambda+\rho} e^\tau_{\lambda,1}(g))=a_1^{\lambda+\rho}Q(e^\tau_{\lambda,1}(g))
\end{equation}
and by setting $g=k_1=1$, we have
\begin{equation} \label{eq:Q}
Qe^\tau_{\lambda,1}(n_1a_1)\stackrel{(\ref{eq:Q12})}{=}a_1^{\lambda+\rho}Qe^\tau_{\lambda,1}(1).
\end{equation}
Therefore, since $e^\tau_{\lambda,1} \in C^\infty(X,\E_{\tilde{\tau}}) \otimes E_\tau \subset C^\infty(G,\End(E_\tau))$, we have that $Qe^\tau_{\lambda,1} \in  C^\infty(X,\E_{\tilde{\gamma}}) \otimes E_\tau \subset C^\infty(G,\Hom(E_\gamma,E_\tau)).$
Therefore, for $g=n_1a_1k_2 \in G$, we can conclude that
\begin{eqnarray} \label{eq:Q1}
&&Qe^\tau_{\lambda,1}(n_1a_1k_2)
=Qe^\tau_{\lambda,1}(n_1a_1)\gamma(k_2)
\stackrel{(\ref{eq:Q})}{=} a_1^{\lambda+\rho}(Qe^\tau_{\lambda,1}(1))\gamma(k_2)
\stackrel{(\ref{eq:7})}{=} (Qe^\tau_{\lambda,1}(1))e^\gamma_{\lambda,1}(n_1a_1k_2). \nonumber \\
&&
\end{eqnarray}
Now for general $k\in K$, we observe that
$e^\tau_{\lambda,k}=l_ke^\tau_{\lambda,1}$. Hence
$$ Qe^\tau_{\lambda,k}=Q(l_ke^\tau_{\lambda,1})\stackrel{(\ref{eq:Q1})}{=}l_k (Qe^\tau_{\lambda,1}(1))e^\gamma_{\lambda,1}=(Qe^\tau_{\lambda,1}(1)) \circ e^\gamma_{\lambda,k}.$$
Thus, we get the desired result.
\end{proof}

\section{Delorme's intertwining conditions and some examples} \label{sect:DelormeIntw}
We study Delorme's intertwining conditions $(D.a)$ in Def.~\ref{defn:intwcond} and determine the  intertwining conditions in (\textcolor{blue}{Level 2}) and (\textcolor{blue}{Level 3}) induced by them.
To do this, we firstly need some preparations.
 In the previous Section~\ref{sect:FT}, we have seen that the identification (\ref{eq:variant1}).
Let us now take a closer look.
Consider the Frobenius-reciprocity (\ref{eq:Frob}) with its dual (\ref{eq:dualFrob}) and define the map
$$
I: \bigoplus_{\sigma \in \hat{M}} H^{\sigma}_\infty \otimes \Hom_K(H^{\sigma}_\infty,E_\tau) \longrightarrow H^{\tau|_M}_\infty$$
by $I(\alpha)=d_\sigma \sum_{i=1}^{m(\tau,\sigma)} s_i\alpha_i,$ for $\alpha = \sum_{i=1}^{m(\tau,\sigma)} \alpha_i \otimes S_i\in H^{\sigma}_\infty \otimes \Hom_K(H^{\sigma}_\infty,E_\tau)$,
where $s_i = Frob(S_i)$ runs a basis through $\Hom_M(E_\sigma,E_\tau)$, for all $i$.
Here, $m(\tau,\sigma)$ stands for the dimension of the multiplicity space $\Hom_K(H^{\sigma}_\infty,E_\tau)$.
For $T \in \Hom_K(E_\tau,H^{\sigma}_\infty)$, let
$$\langle \alpha, T \rangle := \sum_{i=1}^{m(\tau,\sigma)} \alpha_i \cdot \Tr_{\tau}(S_i\circ T).$$
Now, by using the identification 
$
[\End(H_\infty^{\sigma}) \otimes E_\tau]^K
\stackrel{j}{\cong} 
 H^{\sigma}_\infty \otimes \Hom_K(H^{\sigma}_\infty,E_\tau),
$
we can define the map
\begin{equation} \label{eq:mapJ}
J: \bigoplus_{\sigma \in \hat{M}} [\End(H^{\sigma}_\infty) \otimes E_\tau]^K  \longrightarrow H^{\tau|_M}_\infty
\end{equation}
by $J=I \circ j.$
In addition, for $\beta= \sum_{i=1}^{d_\tau} \beta_i \otimes v_i \in \bigoplus_{\sigma \in \hat{M}} [\End(H^{\sigma}_\infty) \otimes E_\tau]^K$ and $T \in \Hom_K(E_\tau,H^{\sigma}_\infty)$, let 
\begin{equation} \label{eq:bracketsT}
 \langle \beta, T \rangle := \sum_{i=1}^{d_\tau} \beta_i \circ T(v_i) \in H_\infty^\sigma,
\end{equation}
where $\{v_i, i=1,\dots, d_\tau\}$ runs a vector basis of $E_\tau$.
One checks that $\langle \beta, T \rangle= \langle j(\beta), T \rangle$.

\begin{prop} \label{prop:Level123} 
With the previous notations, let $f:=\sum_i^{d_{\tau}} f_i \otimes v_i \in C^\infty_c(X,\E_\tau)$. Denote by $\mathcal{F}_\tau(f)$ its Fourier transform in $H^{\tau|_M}_\infty$ given in (\ref{eq:FTtau}).\\ 
Then, for $T \in \emph \Hom_K(E_\tau,H^\sigma_\infty)$ and $t=\widetilde{Frob}^{-1}(T)\in \emph \Hom_M(E_\tau,E_\sigma)$, we obtain
\begin{itemize}
\item[(1)] $\langle \alpha, T \rangle = t \circ I(\alpha),$ 
\item[(2)]
$\langle \mathcal{F}_{\sigma,\lambda}(f), T \rangle = t \circ \mathcal{F}_\tau f(\lambda, \cdot) \in H^{\sigma,\lambda}_{\infty},$ for $\lambda \in \aL^*_\C,$
\item[(3)] 
$\mathcal{F}_\tau f(\lambda, \cdot) = J (\bigoplus_{\sigma\in \widehat{M}} \mathcal{F}_{\sigma,\lambda}(f)),$ for $\lambda \in \aL^*_\C$.
\end{itemize}
\end{prop}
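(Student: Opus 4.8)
The plan is to verify the three identities in sequence, since each builds on the previous one. For (1), I would unwind the definitions: write $\alpha = \sum_{i=1}^{m(\tau,\sigma)} \alpha_i \otimes S_i$ with $s_i = Frob(S_i)$, so that by definition $I(\alpha) = d_\sigma \sum_i s_i \alpha_i$ and $\langle \alpha, T\rangle = \sum_i \alpha_i \cdot \Tr_\tau(S_i \circ T)$. The key is to relate $\Tr_\tau(S_i \circ T)$ to $t \circ s_i$, where $t = \widetilde{Frob}^{-1}(T)$. Using Lemma~\ref{lem:inverseFrob} to express $S_i = Frob^{-1}(s_i)$ as $S_i(f) = \int_K \tau(k) s_i f(k)\, dk$, and the formula (\ref{eq:invFrobdual}) for $\widetilde{Frob}^{-1}(t)(v)(k) = t\tau(k^{-1})v$, I would compute $S_i \circ T$ explicitly as an endomorphism of $E_\tau$ and take its trace. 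The Schur-orthogonality/character computation should collapse the $K$-integral and produce exactly the factor making $\langle\alpha,T\rangle = t\circ I(\alpha)$; the constant $d_\sigma$ in the definition of $I$ is precisely there to absorb the normalization from the $K$-integration.

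For (2), I would apply (1) with $\alpha = j(\mathcal{F}_{\sigma,\lambda}(f)) = \sum_i \mathcal{F}_{\sigma,\lambda}(f_i) \otimes v_i$, so that $\langle \mathcal{F}_{\sigma,\lambda}(f), T\rangle = \langle j(\mathcal{F}_{\sigma,\lambda}(f)), T\rangle = t \circ I(j(\mathcal{F}_{\sigma,\lambda}(f)))$. It then remains to identify $I(j(\mathcal{F}_{\sigma,\lambda}(f)))$ with $\mathcal{F}_\tau f(\lambda,\cdot)$, but this is exactly statement (3), so I would actually prove (3) first for a single $\sigma$ and then deduce (2). Concretely, $\mathcal{F}_{\sigma,\lambda}(f_i) = \pi_{\sigma,\lambda}(f_i) = \int_G f_i(g)\pi_{\sigma,\lambda}(g)\,dg$, and I would plug the compact-picture formula (\ref{eq:repK}) for $\pi_{\sigma,\lambda}(g)\varphi$ together with the definition of $\mathcal{F}_\tau f(\lambda,k)$ via the kernel $e^\tau_{\lambda,k}(g) = \tau(\kappa(g^{-1}k))^{-1} a(g^{-1}k)^{-(\lambda+\rho)}$ from (\ref{eq:exptau}), and check that the two expressions match after applying $I$ (i.e. after contracting with the basis $s_i$ of $\Hom_M(E_\sigma,E_\tau)$ and summing). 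The $K$-equivariance built into $H^{\tau|_M}_\infty$ and into $e^\tau_{\lambda,k}$ guarantees the result lands in the right space.

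For (3) in full, I would sum the single-$\sigma$ computation over $\sigma \in \widehat{M}$: since $J = I \circ j$ and $j$ is the identification $[\End(H^\sigma_\infty)\otimes E_\tau]^K \cong H^\sigma_\infty \otimes \Hom_K(H^\sigma_\infty, E_\tau)$, applying $J$ to $\bigoplus_\sigma \mathcal{F}_{\sigma,\lambda}(f)$ reassembles the $\sigma$-isotypic pieces into the full $H^{\tau|_M}_\infty$-valued function $\mathcal{F}_\tau f(\lambda,\cdot)$, as in the chain of isomorphisms (\ref{eq:variant1}). The main obstacle, I expect, is bookkeeping the normalization constants — the $d_\sigma$ in $I$, the Plancherel-type weights, and the conventions in Frobenius reciprocity (\ref{eq:Frob}) and its inverse (Lemma~\ref{lem:inverseFrob}) — so that the kernel $e^\tau_{\lambda,k}$ is reproduced with coefficient exactly $1$ rather than some stray factor. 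A clean way to control this is to test both sides against an arbitrary $T \in \Hom_K(E_\tau, H^\sigma_\infty)$ using the pairing $\langle\cdot,\cdot\rangle$ and invoke the compatibility $\langle\beta,T\rangle = \langle j(\beta),T\rangle$ noted just before the proposition, reducing everything to (1) and a direct integral manipulation; since $T$ (equivalently $t$) is arbitrary and the $t$'s separate points, matching the paired scalars suffices.
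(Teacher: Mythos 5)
Your proposal is correct and follows essentially the same route as the paper: part (1) via the explicit Frobenius formulas, the $K$-average, and Schur's lemma (the $d_\sigma$ in $I$ cancelling the dimension factor from $\Tr_\sigma(t\circ s)$), and the integral computation identifying $\pi_{\sigma,\lambda}(g)(t\tau(\cdot)^{-1}v_i)$ with $t\,e^\tau_{\lambda,k}(g)v_i$, combined with non-degeneracy of the trace pairing to pass between (2) and (3). The only difference is cosmetic: the paper proves (2) directly by that integral computation and then deduces (3) by duality, whereas you package the same computation as a proof of (3) and deduce (2) — the ingredients and the logic are identical.
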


\begin{proof}
\begin{itemize}
\item[(1)] It is sufficient to prove it for only one summand in $\alpha$, hence let $\alpha=\alpha_1 \otimes S$. For $T=\widetilde{Frob}(t)\in \Hom_K(E_\tau,H^\sigma_\infty)$ and $S=Frob(s) \in \Hom_K(H^\sigma_\infty,E_\tau)$, we thus obtain
\begin{eqnarray*}
\langle \alpha, T \rangle 
= \alpha_1 \Tr_\tau (S \circ T)
&\stackrel{\text{Lem.}~\ref{lem:inverseFrob}+(\ref{eq:invFrobdual})}{=}& \alpha_1 \Tr_\tau \Big(v \mapsto \int_K \tau(k) s\circ t(\tau(k^{-1}))v\; dk\Big), \; v \in E_\tau \\
&=&  \alpha_1 \Tr_\tau\Big( \int_K \tau(k) s \circ t \tau(k^{-1}) \; dk\Big) \\
&=& \alpha_1 \Tr_\tau (s \circ t) \\
&=&\Tr_\sigma (t \circ s)  \alpha_1.
\end{eqnarray*}
Since $\sigma \in \widehat{M}$ is irreducible and $t\circ s \in \End_M(E_\sigma)$,
by Schur's lemma, we have that $t\circ s = \lambda \cdot \Id$, for some $\lambda \in \C$ and thus $\Tr_{\sigma}(t \circ s)= \lambda$.
Hence $\langle \alpha, T \rangle = (t \circ s)(\alpha_1) = t(I(\alpha)).$
\item[(2)] By computation, we obtain
\begin{eqnarray*}
\langle \mathcal{F}_{\sigma,\lambda}(f), T \rangle 
= \sum_{i=1}^{d_\tau} \mathcal{F}_{\sigma,\lambda}(f_i) \circ T(v_i) 
&\stackrel{(\ref{eq:invFrobdual})}{=}& \sum_{i=1}^{d_\tau} \mathcal{F}_{\sigma,\lambda}(f_i)(t\tau(\cdot)(v_i))\\
&\stackrel{\text{Def.}~\ref{def:FTDelorme}}{=}& \sum_{i=1}^{d_\tau} \int_G f_i(g) \pi_{\sigma,\lambda}(g) (t\tau(\cdot)(v_i)) \; dg\\
&=& \sum_{i=1}^{d_\tau} \int_G f_i(g) (\pi_{\sigma,\lambda}(g) \varphi_i)(\cdot) \; dg.\\
\end{eqnarray*}
In the last line, we set $\varphi_i(k):=t\tau(k^{-1})(v_i)$, for $k\in K$.
Fix $k\in K$, by applying (\ref{eq:repK}), we have
$(\pi_{\sigma,\lambda}(g) \varphi_i)(k)=a(g^{-1}k)^{-(\lambda+\rho)} \varphi(\kappa(g^{-1}k))^{-1}.$

Thus,
\begin{eqnarray*}
 \sum_{i=1}^{d_\tau} \int_G f_i(g) t \tau(\kappa(g^{-1}k))^{-1} a(g^{-1}k)^{-(\lambda+\rho)} v_i \; dg
&=&  \sum_{i=1}^{d_\tau} \int_G f_i(g) t e^\tau_{\lambda,k}(g) v_i \; dg \\
&=& t \circ \int_G  \sum_{i=1}^{d_\tau} e^\tau_{\lambda,k}(g) f_i(g) v_i \; dg \\
&=& t \circ \int_G  e^\tau_{\lambda,k}(g) f(g) \; dg = t \circ \mathcal{F}_\tau f(\lambda,k).
\end{eqnarray*}

\item[(3)] By rewritting (1) and (2) in the following way:
\begin{itemize}
\item[(1')] $\Tr_\tau (I^{-1}(\alpha) \circ T) = t \circ \alpha$,
\item[(2')] $\Tr_\tau ( \mathcal{F}_{\sigma,\lambda}(f) \circ T)=t \circ \mathcal{F}_\tau f(\lambda, \cdot),$
\end{itemize}
we get that
$$\Tr_\tau (J^{-1}(\mathcal{F}_\tau f(\lambda, \cdot)) \circ T)= \Tr_\tau (I^{-1}(\mathcal{F}_\tau f(\lambda, \cdot)) \circ T) \stackrel{(1')}{=}  t \circ \mathcal{F}_\tau f(\lambda, \cdot) \stackrel{(2')}{=} \Tr_\tau( \mathcal{F}_{\sigma,\lambda}(f) \circ T).$$
By taking only the $\sigma$-component of $\bigoplus_{\sigma \in \widehat{M}} [\End(H^\sigma_\infty) \otimes E_\tau]^K$, we have that 
the parining in $\Tr_\sigma$ is non-degenerate, thus
$J^{-1}(\mathcal{F}_\tau f(\lambda, \cdot))= \bigoplus_{\sigma\in \widehat{M}} \mathcal{F}_{\sigma,\lambda}(f)$. \qedhere
\end{itemize}
\end{proof}

\noindent
We first study what happens to Delorme's intertwining condition $(D.a)$ if we tensor it with $E_\tau$ and take $K$-invariants.

\begin{defn} \label{defn:intwcondLevel1'}
Consider $\tau \in \widehat{K}.$
\begin{itemize}
\item[(1)] We say that a function $$\phi \in 
\prod_{\sigma \in \widehat{M}}[ \emph \Hol(\aL^*_\C, \emph \End(H^\sigma_\infty)) \otimes E_\tau]^K
\cong \bigoplus_{\sigma \subset \tau|_M}  \emph \Hol(\aL^*_\C, [\emph \End(H^\sigma_\infty) \otimes E_\tau]^K)$$
satisfies the intertwining condition, if for each $\tilde{v} \in E_{\tilde{\tau}}$:
$$\langle \phi, \tilde{v} \rangle_\tau \in  \prod_{\sigma \in \widehat{M}} \emph \Hol(\aL^*_\C, \emph \End(H^\sigma_\infty))$$
satisfies the intertwining condition in Def.~\ref{defn:intwcond}.
\end{itemize}
\end{defn}

\begin{prop} \label{prop:Level01}
Let $\phi \in \prod_{\sigma \in \widehat{M}}[ \emph \Hol(\aL^*_\C, \emph \End(H^\sigma_\infty)) \otimes E_\tau]^K$ as in Def.~\ref{defn:intwcondLevel1'} and $(\xi,W)$ the intertwining data defined in Def.~\ref{defn:intwcond}.
\begin{itemize}
\item[$(D.1)$] Then, $\phi$ satisfies the intertwining condition (1) of Def.~\ref{defn:intwcondLevel1'} if, and only if, for each  intertwining datum $(\xi, W)$ and $T\in \emph \Hom_K(E_\tau,W) \subset \emph \Hom_K(E_\tau,H_\xi)$, the induced element $\phi_\xi \in \emph [\emph \End(H_\xi) \otimes E_\tau]^K$ satisfies
$$\langle \phi_\xi, T \rangle \in W.$$
\end{itemize}
\end{prop}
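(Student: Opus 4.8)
The plan is to prove the equivalence $(D.1)$ by unwinding both sides using the pairing $\langle \cdot, \cdot\rangle$ from \eqref{eq:bracketsT} together with Frobenius reciprocity \eqref{eq:dualFrob}. First I would observe that an intertwining datum $(\xi, W)$ with $\xi = (\xi_1,\dots,\xi_s)$ lives in the direct sum $H_\xi = \bigoplus_i H^{\sigma_i,\lambda_i}_{\infty,(m_i)}$, and since everything in sight is additive in the summands, it suffices to treat the case of a single derived representation $H^{\sigma,\lambda}_{\infty,(m)}$; moreover by the same argument one reduces the $m$-th derived representation to the $m=0$ case $H^\sigma_\infty$, so I would phrase the argument for $\xi = (\sigma,\lambda,0)$ and $H_\xi = H^\sigma_\infty$, then remark that the general case follows by linearity and by the compatibility of the pairing with the passage to germs. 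This keeps the bookkeeping light.

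Next I would translate the left-hand condition. Fixing $\tilde v \in E_{\tilde\tau}$, the element $\langle \phi, \tilde v\rangle_\tau \in \prod_\sigma \Hol(\aL^*_\C, \End(H^\sigma_\infty))$ satisfies Definition~\ref{defn:intwcond} iff $\langle\phi,\tilde v\rangle_{\tau,\xi}(W) \subseteq W$ for all $(\xi, W)$. Here the key is to identify, for each $T \in \Hom_K(E_\tau, W)$ with $t = \widetilde{Frob}^{-1}(T) \in \Hom_M(E_\tau, E_\sigma)$, the element $\langle \phi_\xi, T\rangle \in H^\sigma_\infty$ with an expression built from the operators $\langle\phi,\tilde v\rangle_{\tau,\xi}$ applied to vectors $T(v_i)$, via \eqref{eq:bracketsT}: namely $\langle \phi_\xi, T\rangle = \sum_i \phi_{i,\xi}\circ T(v_i)$ after unwinding the $K$-invariant tensor $\phi_\xi = \sum_i \phi_{i,\xi}\otimes v_i$. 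The content of the proposition is then that ``$\langle\phi,\tilde v\rangle_{\tau,\xi}$ preserves $W$ for all $\tilde v$'' is equivalent to ``$\langle\phi_\xi, T\rangle \in W$ for all $T \in \Hom_K(E_\tau, W)$''. For the forward direction one feeds the vectors $T(v_i) \in W$ (note $T$ maps into $W$ precisely because it is $K$-equivariant and lands in the $K$-invariant tensor factor) into the preserved subspace; for the converse one uses that the $\tilde v$-slots separate points, i.e. an element of $H^\sigma_\infty$ lies in $W$ iff a suitable family of its pairings does, exploiting that $W$ is closed and $G$-stable and that $\Hom_K(E_\tau, W)$ is ``large enough'' — this is where I would invoke Proposition~\ref{prop:Level123}(1)--(2) to rewrite pairings in terms of the $I$, $J$ maps and traces, matching the two descriptions.

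The main obstacle I anticipate is the converse direction: showing that $W$-membership is detected by the pairings $\langle \cdot, T\rangle$ ranging over $T \in \Hom_K(E_\tau, W)$. This requires that the correspondence $S \leftrightarrow s = Frob(S)$ between $\Hom_K(H^\sigma_\infty, E_\tau)$ and $\Hom_M(E_\sigma, E_\tau)$, dualized, gives enough functionals; concretely, I would argue that since $W$ is a closed $G$-subrepresentation of $H^\sigma_\infty$ (or of $H_\xi$), and the multiplicity spaces $\Hom_K(E_\tau, W)$ exhaust — when $\tau$ ranges appropriately or for the fixed $\tau$ together with the full flexibility in $\tilde v$ — a separating family on the relevant $\tau|_M$-isotypic pieces, the two conditions coincide. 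The technical heart is thus a non-degeneracy statement: the bilinear pairing appearing in \eqref{eq:bracketsT} and its analogue $\langle\alpha, T\rangle$ restricted to $W$ is non-degenerate in the second variable, which is exactly the kind of statement established via Schur's lemma in the proof of Proposition~\ref{prop:Level123}(1). Once that is in place, the equivalence follows by the reductions above, and I would close by noting that the reduction from $H_\xi$ and $W$ to a single summand and from $m$-th derived to $0$-th derived is routine because all constructions ($\phi_\xi$, the pairing, $Frob$) are functorial and additive.
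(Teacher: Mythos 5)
Your forward direction coincides with the paper's and is fine, but the proposal has two real problems. First, the opening reduction is not valid: in an intertwining datum $(\xi,W)$ the subspace $W$ is an \emph{arbitrary} closed $G$-subrepresentation of the direct sum $H_\xi=\bigoplus_i H^{\sigma_i,\lambda_i}_{\infty,(m_i)}$ — typically the graph of an intertwining operator, as in Example~\ref{exmp:equivintwcond}(b) — and such a $W$ does not decompose into subrepresentations of the individual summands; likewise the $m$-th derived module is not a sum of copies of the $0$-th. Collapsing to $\xi=(\sigma,\lambda,0)$ would erase precisely the content of Delorme's condition. The reduction is also unnecessary: the paper's argument never uses the internal structure of $H_\xi$, only that it is a $G$-representation with a closed subrepresentation $W$ and that $\phi_\xi=\sum_{i=1}^{d_\tau}f_i\otimes v_i$ is $K$-invariant.

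Second, and more seriously, the converse direction is missing its key mechanism. What must be shown is that from $\langle\phi_\xi,T\rangle=\sum_i f_i\circ T(v_i)\in W$ for all $T\in\Hom_K(E_\tau,W)$ one can recover each individual $f_i\circ T(v_j)\in W$; the difficulty is \emph{decoupling the sum}, not detecting membership in $W$ by a separating family of functionals. Your appeal to non-degeneracy via Schur's lemma (Prop.~\ref{prop:Level123}(1)) concerns the trace pairing between $\Hom_K(H^\sigma_\infty,E_\tau)$ and $\Hom_K(E_\tau,H^\sigma_\infty)$ on the full induced representation and does not produce this decoupling for a subrepresentation $W$ (note also that $\Hom_K(E_\tau,W)$ may well be too small to separate anything). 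The paper's device is different: compose $T$ with the elementary endomorphisms $A_{ij}\in\End(E_\tau)$ (sending $v_i\mapsto v_j$ and $v_k\mapsto 0$ for $k\neq i$), observe that $f_i\circ T(v_j)=\langle\phi_\xi,T\circ A_{ij}\rangle$, and then use the $K$-invariance of $\phi_\xi$ to replace $T\circ A_{ij}\in\Hom(E_\tau,W)$ by its $K$-average $p_K(T\circ A_{ij})\in\Hom_K(E_\tau,W)$ inside the pairing \eqref{eq:bracketsT}, to which the hypothesis then applies. Without this averaging step, or an explicit substitute for it, the converse implication does not go through.
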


\begin{proof}
For each $i\in \{1,\dots,d_\tau\}$, consider $f_i \in \End(H_\xi)$ so that for each intertwining datum $(\xi,W)$, we have $f_i(W) \subseteq W$.
Consider
$$\phi_\xi=\sum_{i=1}^{d_\tau} f_i \otimes v_i  \in [\End(H_\xi) \otimes E_\tau]^K$$
as in Thm.~\ref{thm:equivintcond}.
It is sufficient to show that for each $i$ and $T \in \Hom_K(E_\tau,W)$, we have
$f_i \circ T \in W$ if, and only if, $\langle \phi_\xi, T \rangle \in W,$ $\forall T \in \Hom_K(E_\tau,W).$

The right implication is obvious. By using the definition of the brackets $\langle \cdot, \cdot \rangle$ as in (\ref{eq:bracketsT}), we have
$$\langle \phi_\xi, T \rangle = \sum_{i=1}^{d_\tau} f_i \circ T(v_i) \in W$$
since for $v_i \in E_\tau$, $T(v_i) \in W \subset H_\xi$.

For the left implication, write $f_i =\langle \phi_\xi, \tilde{v}_i \rangle_\tau,$ for all $i\in \{1, \dots, d_\tau\}$, where $\tilde{v}_i$ runs a dual basis of $\E_{\tilde{\tau}}$.
Consider the mapping $A_{ij} \in \End(E_\tau)$ such that $v_i \mapsto v_j$ and $v_k \mapsto 0, \; k \neq i$. Then, for all $ i,j\in \{1, \dots, d_\tau\}$, we have
$$
f_i \circ T(v_j)
= \langle \phi_\xi,  T(v_j) \cdot \tilde{v}_i \rangle
= \langle \phi_\xi,  T \circ A_{ij} \rangle
= \langle \phi_\xi,  p_K(T \circ A_{ij}) \rangle,
$$
where $p_K: \Hom(E_\tau, W) \rightarrow \Hom_K(E_\tau,W)$ is the orthogonal projection. 
Note that $T(v_j) \cdot \tilde{v}_i \in \Hom(E_\tau,W)$, for all $i,j$.
By setting, now in the last line $T'_{ij}:=p_K(T \circ A_{ij})$, we get that $\langle \phi_\xi, T'_{ij}\rangle \in W$. Thus, for all $i\in \{1, \dots, d_\tau\}$, we have $f_i \circ T \in W$.
\end{proof}

Next, we state the intertwining condition in (\textcolor{blue}{Level 2}) and (\textcolor{blue}{Level 3}) induced from Delorme's intertwining condition (D.a), more presciely (1) in Def.~\ref{defn:intwcondLevel1'}.

\begin{defn}[Intertwining conditions in (\textcolor{blue}{Level 2}) and (\textcolor{blue}{Level 3})]
\label{defn:intwcondLevel23}
Let $\tau, \gamma \in \hat{K}$ and consider the map $J$ defined in (\ref{eq:mapJ}).
\begin{itemize}
\item[(2)] We say that a function $\psi \in \emph \Hol(\aL^*_\C, H^{\tau|_M}_\infty)$ satisfies the intertwining condition, if 
$$J^{-1}\psi \in \bigoplus_{\sigma \subset \tau|_M}  \emph \Hol(\aL^*_\C, [\emph \End(H^\sigma_\infty) \otimes E_\tau]^K)$$
satisfies the intertwining condition $(1)$ in Def.~\ref{defn:intwcondLevel1'}.
\item[(3)] We say that a function $\varphi \in \emph \Hol(\aL^*_\C, \emph \Hom_M(E_\gamma,E_\tau))$ satisfies the intertwining condition, if for all $w\in E_\gamma$:
$$(\lambda,k) \mapsto \varphi(w)(\lambda,k):=\varphi(\lambda)\gamma(k^{-1})w \in  \emph \Hol(\aL^*_\C, H^{\tau|_M}_\infty)$$
satisfies the above intertwining condition $(2)$.
\end{itemize}
\end{defn}

\noindent
We now want to make the intertwining conditions more explicit. 
Let us first introduce some notations.
We define
$$\Hom_M(E_\tau,E_\sigma)^{\lambda}_{(m)}:= \Hol(\aL^*_\C, \Hom_M(E_\tau,E_\sigma))/m_\lambda^{m+1} \;  \Hol(\aL^*_\C, \Hom_M(E_\tau,E_\sigma))$$
as in (\ref{eq:succderv}), similarly for $H^{\tau|_M,\lambda}_{\infty,(m)}$.
For $\tau \in \widehat{K}$ and each intertwining datum $(\xi,W)$,
consider
\begin{eqnarray} \label{eq:DtauW}
D^\tau_W&:=&\{t \in \bigoplus_{i=1}^s \Hom_M(E_\tau,E_{\sigma_i})^{\lambda_i}_{(m_i)}  \;|\;
 T=\widetilde{Frob}^{-1}(t) \in \Hom_K(E_\tau,W) \subset \Hom_K(E_\tau, H_\xi)\} \nonumber \\
&\subset&\bigoplus_{i=1}^s \Hom_M(E_\tau,E_{\sigma_i})^{\lambda_i}_{(m_i)}.
\end{eqnarray}
Write by $\overline{\Xi}$ the set of all 2-tuples $(\lambda,m)$ with $\lambda \in \aL^*_\C$ and $m\in \N_0$ and we define the map
$$ \Xi \longrightarrow \overline{\Xi}, \;\;\; \xi=(\sigma,\lambda,m) \mapsto \overline{\xi}=(\lambda,m).$$
For $s\in \N$ and $\xi \in \Xi^s$, we have the corresponding element $\overline{\xi}\in \overline{\Xi}^s$.

\begin{thm}[Intertwining conditions in the three levels]\label{thm:equivintcond}
With the notations above, we then have:
\begin{itemize}
\item[(D.2)] (\textcolor{blue}{Level 2})
Then, $\psi \in \emph \Hol(\aL^*_\C, H^{\tau|_M}_\infty)$ satisfies the intertwining condition (2) of Def.~\ref{defn:intwcondLevel23} if, and only if, for each intertwining datum $(\xi,W)$ and each non-zero $t=(t_1,t_2, \dots, t_s) \in D^\tau_W$, the induced element
$\psi_{\overline{\xi}} \in  \bigoplus_{i=1}^s H^{\tau|_M,\lambda_i}_{\infty,(m_i)}=:H^{\tau|_M}_{\overline{\xi}}$ satisfies
$$ t \circ \psi_{\overline{\xi}}=(t_1 \circ \psi_1, \dots, t_2 \circ \psi_s) \in W.$$
\end{itemize}
\begin{itemize}
\item[(D.3)] (\textcolor{blue}{Level 3})
Then, $\varphi \in \emph \Hol(\aL^*_\C, \emph \Hom_M(E_\gamma,E_\tau))$  satisfies the intertwining condition (3) of Def.~\ref{defn:intwcondLevel23} if, and only if, for each intertwining datum $(\xi,W)$ and each non-zero $t=(t_1,t_2, \dots, t_s) \in D^\tau_W$, the induced element
$\varphi_{\overline{\xi}} \in  \bigoplus_{i=1}^s \emph \Hom_M(E_\gamma,E_\tau)^{\lambda_i}_{(m_i)} =:H^{\gamma,\tau}_{\overline{\xi}}$ satisfies
$$  t \circ \varphi_{\overline{\xi}}=(t_1 \circ \varphi_1, \dots, t_2 \circ \varphi_s) \in D^\gamma_W.$$
\end{itemize}
\end{thm}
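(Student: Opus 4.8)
The plan is to obtain both equivalences by stringing together Definition~\ref{defn:intwcondLevel23}, Proposition~\ref{prop:Level01} and a derived-level version of Proposition~\ref{prop:Level123}, the only preparatory work being to lift the latter (and the relevant Frobenius reciprocities) from $H^\sigma_\infty$ to the derived representations $H^{\sigma,\lambda}_{\infty,(m)}$ of (\ref{eq:succderv}). So the first step is to set up this \emph{derived-level dictionary}. The key observation is that every ingredient connecting (\textcolor{blue}{Level 1}) and (\textcolor{blue}{Level 2})---the Frobenius isomorphisms (\ref{eq:Frob})--(\ref{eq:dualFrob}), the maps $I$ and $J=I\circ j$ of (\ref{eq:mapJ}), and the pairings (\ref{eq:bracketsT})---is assembled from $\lambda$-independent linear-algebraic operations (composition with fixed $M$-morphisms, averaging over $K$, the scalars $d_\sigma$), and that the $K$-action on $H^\sigma_\infty$ carried by $\pi_{\sigma,\lambda}$ does not depend on $\lambda$. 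Hence, after passing to germs at $\lambda$, each of these maps is $\Hol_\lambda$-linear, so it preserves the submodules $m_\lambda^{m+1}(\cdot)$ and descends to the quotients (\ref{eq:succderv}). In particular the dual Frobenius reciprocity induces, for each $(\sigma,\lambda,m)$, an isomorphism $\Hom_K(E_\tau,H^{\sigma,\lambda}_{\infty,(m)})\cong\Hom_M(E_\tau,E_\sigma)^\lambda_{(m)}$, $T=\widetilde{Frob}^{-1}(t)$; and the Fourier-transform-free reformulation of Proposition~\ref{prop:Level123}(2)---namely $\langle J^{-1}\psi,T\rangle=t\circ\psi$, obtained by combining Proposition~\ref{prop:Level123}(1) with $\langle\beta,T\rangle=\langle j(\beta),T\rangle$ and with the fact that $J$ is an isomorphism---remains valid verbatim with $H^\sigma_\infty$ replaced by the derived spaces. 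Carrying out this lifting, together with checking that the composition $t_i\circ(\cdot)_i$ of two germs each taken modulo $m_{\lambda_i}^{m_i+1}$ is well defined, is the main (though routine) obstacle.

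For (D.2) I would then argue: by Definition~\ref{defn:intwcondLevel23}(2), $\psi$ satisfies the intertwining condition iff $\phi:=J^{-1}\psi$ satisfies condition~(1) of Definition~\ref{defn:intwcondLevel1'}, and by Proposition~\ref{prop:Level01} this holds iff $\langle\phi_\xi,T\rangle\in W$ for every intertwining datum $(\xi,W)$ and every $T\in\Hom_K(E_\tau,W)\subseteq\Hom_K(E_\tau,H_\xi)$. Fix $(\xi,W)$ with $\xi=(\xi_1,\dots,\xi_s)$, $\xi_i=(\sigma_i,\lambda_i,m_i)$. Writing $\Hom_K(E_\tau,H_\xi)=\bigoplus_i\Hom_K(E_\tau,H^{\sigma_i,\lambda_i}_{\infty,(m_i)})\cong\bigoplus_i\Hom_M(E_\tau,E_{\sigma_i})^{\lambda_i}_{(m_i)}$ via componentwise derived Frobenius, the element $T$ corresponds to some $t=(t_1,\dots,t_s)$; by (\ref{eq:DtauW}) the constraint $T\in\Hom_K(E_\tau,W)$ is exactly $t\in D^\tau_W$, while the derived form of Proposition~\ref{prop:Level123}(2), applied blockwise, identifies $\langle\phi_\xi,T\rangle$ with $t\circ\psi_{\overline{\xi}}=(t_1\circ\psi_1,\dots,t_s\circ\psi_s)$. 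Since $t=0\iff T=0$, in which case $t\circ\psi_{\overline{\xi}}=0\in W$ trivially, the quantifier may be restricted to non-zero $t$; this is precisely the asserted equivalence.

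For (D.3) I would bootstrap from (D.2). By Definition~\ref{defn:intwcondLevel23}(3), $\varphi$ satisfies the intertwining condition iff for every $w\in E_\gamma$ the function $\psi_w(\lambda,k):=\varphi(\lambda)\gamma(k^{-1})w$ satisfies condition~(2); by (D.2) this becomes: for every intertwining datum $(\xi,W)$, every non-zero $t\in D^\tau_W$ and every $w\in E_\gamma$, one has $t\circ(\psi_w)_{\overline{\xi}}\in W$. The remaining step is to recognise, via the explicit formula (\ref{eq:invFrobdual}) for $\widetilde{Frob}^{-1}$ (with $\gamma$ in place of $\tau$), that $t_i\big(\psi_w(\lambda)(k)\big)=(t_i\circ\varphi(\lambda))\gamma(k^{-1})w=\widetilde{Frob}^{-1}(t_i\circ\varphi(\lambda))(w)(k)$, so that, after passing to germs modulo $m_{\lambda_i}^{m_i+1}$, one gets $t\circ(\psi_w)_{\overline{\xi}}=\widetilde{Frob}^{-1}(t\circ\varphi_{\overline{\xi}})(w)$. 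Hence ``$t\circ(\psi_w)_{\overline{\xi}}\in W$ for all $w\in E_\gamma$'' is equivalent to ``$\widetilde{Frob}^{-1}(t\circ\varphi_{\overline{\xi}})\in\Hom_K(E_\gamma,W)$'', which by (\ref{eq:DtauW}) (with $\gamma$ for $\tau$) is exactly $t\circ\varphi_{\overline{\xi}}\in D^\gamma_W$, the desired condition.
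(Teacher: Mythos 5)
Your proof is correct and follows essentially the same route as the paper's: unpack Definition~\ref{defn:intwcondLevel23} via Prop.~\ref{prop:Level01}, the (derived) Frobenius reciprocity and the Fourier-transform-free form of Prop.~\ref{prop:Level123}(1)--(2) to get (D.2), then use the identity $t\circ(\psi_w)_{\overline{\xi}}=\widetilde{Frob}^{-1}(t\circ\varphi_{\overline{\xi}})(w)$ together with $\Hom_K(E_\gamma,W)\cong D^\gamma_W$ to get (D.3). The only difference is that you make explicit the lifting of the Frobenius maps, the pairings and Prop.~\ref{prop:Level123} from $H^\sigma_\infty$ to the derived representations $H^{\sigma,\lambda}_{\infty,(m)}$ via $\Hol_\lambda$-linearity, a step the paper's two-line argument leaves implicit.
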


\begin{proof}
We obtain directly the equivalence between $(D.1)$ for $J^{-1} \psi$ and $(D.2)$ for $\psi$ by applying the Frobenius reciprocity, Prop.~\ref{prop:Level01} and Prop.~\ref{prop:Level123} (2).

Concerning $(D.2)$ for $\varphi (w)$ $\iff (D.3)$ for $\varphi$, one implication is trivial. 
For the other one, we have, by the inverse dual Frobenius reciprocity, that
$$W \ni t\circ \psi_{\overline{\xi}} = t \circ \widetilde{Frob}^{-1}(\varphi_{\overline{\xi}})(w)(k)\stackrel{(\ref{eq:dualFrob})}{=} t \circ
\varphi_{\overline{\xi}} \circ \gamma(k^{-1})w, \;\; \forall t \in D^\tau_W,$$
for $w\in E_\gamma$ and $k \in K$.
This means that 
$\widetilde{Frob}^{-1}(t \circ \varphi_{\overline{\xi}})(w) \in \Hom_K(E_\gamma, W)$
and hence by applying the dual Frobenius-reciprocity
$ \Hom_K(E_\gamma, W) \stackrel{\widetilde{Frob}}{\cong} D^\gamma_W$,
 this implies that $t\circ \varphi_{\overline{\xi}} \in D^\gamma_W$.
\end{proof}

\begin{exmp} \label{exmp:equivintwcond}
\begin{itemize}
\item[(a)] Consider $s=1$ and $m=0$. 
Let  $\xi:=(\sigma,\lambda,0) \in \Xi$ and $W \subset H^{\sigma,\lambda}_\infty$.
Consider $D^\tau_W \subset \Hom_M(E_\tau,E_\sigma)$ as in Thm.~\ref{thm:equivintcond}.
Then, we have the following intertwining conditions in the corresponding levels:
\begin{itemize}
\item[(D.2a)] (\textcolor{blue}{Level 2}) 
For each intertwining datum $(\xi,W)$ and $0 \neq t\in D^\tau_W$, we have
$$t \circ \psi(\lambda,\cdot) \in W.$$
Note that for each $\overline{\xi} \in  \overline{\Xi}$, the induced element $\psi_{\overline{\xi}}=
\psi(\lambda,\cdot)$.
\item[(D.3a)] (\textcolor{blue}{Level 3})
For each intertwining datum $(\xi,W)$ and $0 \neq t \in D^\tau_W$, we have
$$t \circ \varphi(\lambda) \in D^\gamma_W.$$
Note that for each $\overline{\xi} \in  \overline{\Xi}$, the induced element
$\varphi_{\overline{\xi}}=\varphi(\lambda).$
\end{itemize}
\item[(b)] Consider now $s=2$ and $m_1=m_2=0$.
Let $L: H^{\sigma_1,\lambda_1}_\infty \longrightarrow  H^{\sigma_2,\lambda_2}_\infty$ be an intertwining operator between the two principal series representations.
Let $\xi:=((\sigma_1,\lambda_1,0),(\sigma_2,\lambda_2,0)) \in \Xi^2$ and
$W=graph(L) \subset  H^{\sigma_1,\lambda_1}_\infty \oplus H^{\sigma_2,\lambda_2}_\infty$.
Moreover, define
$l^\tau: \Hom_M(E_\tau,E_{\sigma_1}) \longrightarrow \Hom_M(E_\tau,E_{\sigma_2})$ by
$$l^\tau(t)(v)=L(t \tau(\cdot)^{-1}v)(e)$$
for $v\in E_\tau$ and $t\in \Hom_M(E_\tau,E_{\sigma_1})$.
Then
\begin{eqnarray*}
D^\tau_W=\{(t_1,t_2)\;|\; t_2=l^\tau(t_1)\} 
&=& \{(t,l^\tau(t))\;|\; t \in \Hom_M(E_\tau,E_{\sigma_1})\} \\
&\subset& \Hom_M(E_\tau,E_{\sigma_1}) \oplus \Hom_M(E_\tau,E_{\sigma_2}).
\end{eqnarray*}
In this situation, we have the following intertwining conditions.
\begin{itemize}
\item[(D2.b)] (\textcolor{blue}{Level 2}) For each $t \in \Hom_M(E_\tau,E_{\sigma_1})$, we have for $\psi(\lambda_i, \cdot) \in H^{\tau|_M, \cdot}_\infty, i=1,2$
\begin{equation} \label{eq:Lintwcond2}
 L( t\circ \psi(\lambda_1, \cdot))= l^\tau(t) \circ \psi(\lambda_2, \cdot).
\end{equation}
\item[(D3.b)] (\textcolor{blue}{Level 3})
For each $t \in \Hom_M(E_\tau,E_{\sigma_1})$, we have for $\varphi(\lambda_i) \in \Hom_M(E_\gamma,E_\tau), i=1,2$
\begin{equation} \label{eq:Lintwcond3}
 l^\gamma(t \circ \varphi(\lambda_1)) = l^\tau(t) \circ \varphi(\lambda_2).
\end{equation}
\end{itemize}
\end{itemize}
\end{exmp}	

\section{Topological Paley-Wiener theorem for sections}
The Paley-Wiener space for sections over homogeneous vector bundles is defined as follows.
\begin{defn}[Paley-Wiener space for sections in (\textcolor{blue}{Level 2}) and (\textcolor{blue}{Level 3})]
\label{def:PWsect}
\noindent
\begin{itemize}
\item[(a)] For $r> 0$, let $PW_{\tau,r}(\aL^*_\C \times K/M)$ be the space of sections
$\psi \in C^\infty(\aL^*_\C \times K/M, \E_{\tau|_M})$
be such that
\begin{itemize}
\item[$(2.i)$] the section $\psi$ is holomorphic in $\lambda \in \aL^*_\C$, i.e. $\psi \in \emph \Hol(\aL^*_\C, H^{\tau|_M}_\infty).$
\item[$(2.ii)_r$] (growth condition) for all $Y \in \U(\mathfrak{k})$ and $N \in \N_0$, there exists a constant $C_{r,N,Y}>0$ such that
$$||l_Y\psi(\lambda,k)||_{E_\tau} \leq C_{r,N,Y} 
(1+|\lambda|^2)^{-N} e^{r|\emph \Repart(\lambda)|},\;\;\; k \in K,$$
where $||\cdot||_{E_\tau}$  denotes the norm on finite-dimensional vector space $E_\tau$ (for convenience, we often denotes it by $|\cdot|$).
\item[$(2.iii)$] (intertwining condition) (D.2) from Thm.~\ref{thm:equivintcond}.
\end{itemize}

\item[(b)] By considering an additional $K$-type, let $\prescript{}{\gamma}{PW}_{\tau,r}(\aL^*_\C)$ be the space of functions
$$\aL^*_\C \ni \lambda \mapsto \varphi(\lambda) \in \emph\Hom_M(E_\gamma,E_{\tau})$$
be such that
\begin{itemize}
\item[$(3.i)$] the function $\varphi$ is holomorphic in $\lambda \in \aL^*_\C$.
\item[$(3.ii)_r$] (growth condition) for all $N \in \N_0$, there exists a constant $C_{r,N}>0$ such that
$$||\varphi(\lambda)||_{\text{op}} \leq C_{r,N} 
(1+|\lambda|^2)^{-N} e^{r|\emph \Repart(\lambda)|},$$
where $||\cdot||_{\text{op}}$ denotes the operator norm on $\emph \Hom_M(E_\gamma,E_\tau)$.
\item[$(3.iii)$] (intertwining condition) (D.3) from Thm.~\ref{thm:equivintcond}.
\end{itemize}
\end{itemize}
\end{defn}

The inequalities provide semi-norms $||\cdot||_{r,N,Y}$ (resp. $||\cdot||_{r,N}$) on $PW_{\tau,r}(\aL^*_\C \times K/M)$ (resp. $\prescript{}{\gamma}{PW}_{\tau,r}(\aL^*_\C)$) and made
the vector space $PW_{\tau,r}(\aL^*_\C \times K/M)$ (resp. $\prescript{}{\gamma}{PW}_{\tau,r}(\aL^*_\C)$) to Fréchet space, e.g. one can compare Lem. 10 of Delorme  \cite{Delorme}.

\noindent
Combining Delorme's Paley-Wiener Thm.~\ref{thm:Delorme1} with the above identifications and observations, we obtain a Paley-Wiener theorem in (\textcolor{blue}{Level 2}) and (\textcolor{blue}{Level 3}).

\begin{thm}[Topological Paley-Wiener theorem for sections in (\textcolor{blue}{Level 2}) and (\textcolor{blue}{Level 3})] \label{thm:PWsect}
Let $(\tau, E_\tau)$ be a $K$-representation with associated homogeneous vector bundle $\E_\tau$. For $r> 0$, then the Fourier transform
$$C_r^\infty(X,\E_\tau) \ni \psi \mapsto \mathcal{F}_{\tau}(\psi)(\lambda, k)\in PW_{\tau,r}(\aL^*_\C \times K/M), \;\;\; (\lambda,k) \in \aL^*_\C \times K$$
is a topological isomorphism between $C_r^\infty(X,\E_\tau)$ and $PW_{\tau,r}(\aL^*_\C \times K/M)$.\\
Moreover, by considering an additional $K$-representation $(\gamma,E_\gamma)$ with associated homogeneous vector bundle $\E_\gamma$, then the Fourier transform
$$ C^\infty_r(G,\gamma, \tau) \ni \varphi \mapsto \prescript{}{\gamma}{\mathcal{F}}_{\tau}(\varphi)(\lambda) \in \prescript{}{\gamma}{PW}_{\tau,r}(\aL^*_\C), \;\;\; \lambda \in \aL^*_\C$$
is a topological isomorphism between $C^\infty_r(G,\gamma, \tau)$ and $\prescript{}{\gamma}{PW}_{\tau,r}(\aL^*_\C)$. \QEDA
\end{thm}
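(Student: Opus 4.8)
The plan is to reduce everything to Delorme's Paley-Wiener theorem (Thm.~\ref{thm:Delorme1}) by tensoring with $E_\tau$ and taking $K$-invariants, and then to transport the resulting space through the explicit identifications of Section~\ref{sect:FT}. Concretely, for (\textcolor{blue}{Level 2}) I would start from the chain of $G$-isomorphisms $C^\infty_r(X,\E_\tau)\cong[C^\infty_r(G)\otimes E_\tau]^K$ and apply Thm.~\ref{thm:Delorme1} componentwise to obtain $[PW_r(G)\otimes E_\tau]^K$. The key is that the functor $[\,\cdot\otimes E_\tau]^K$ is exact on the relevant Fréchet/LF-spaces (it is a direct summand projection, realized by the averaging operator $p_K$), so it carries the topological isomorphism of Thm.~\ref{thm:Delorme1} to a topological isomorphism onto $[PW_r(G)\otimes E_\tau]^K$. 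Then I would identify this target: using \eqref{eq:variant1}, the space $\bigoplus_{\sigma}[\End(H^\sigma_\infty)\otimes E_\tau]^K$ is identified via the map $J$ of \eqref{eq:mapJ} with $H^{\tau|_M}_\infty$; the growth condition $(1.ii)_r$ restricted to the $K$-invariant part becomes exactly $(2.ii)_r$ (here one uses that the operator norm on $\End(H^\sigma_\infty)$ controls, and is controlled by, the $E_\tau$-norm of the corresponding section after applying $J$, uniformly over the finitely many $\sigma\subset\tau|_M$, so the sup over $\widehat M$ collapses to a finite sup and the $|\Lambda_\sigma|$ term disappears); and the intertwining condition $(D.a)$ tensored with $E_\tau$ and restricted to $K$-invariants becomes $(D.1)$ of Prop.~\ref{prop:Level01}, which via Prop.~\ref{prop:Level123}(3) and Thm.~\ref{thm:equivintcond} is exactly $(2.iii)$. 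Chasing the definition of $\mathcal F_\tau$ in \eqref{eq:FTtau} against $\mathcal F_{\sigma,\lambda}$ through $J$ — which is precisely the content of Prop.~\ref{prop:Level123}(3), $\mathcal F_\tau f(\lambda,\cdot)=J(\bigoplus_\sigma \mathcal F_{\sigma,\lambda}(f))$ — shows that the composite of the Delorme isomorphism with $J$ is the Fourier transform $\mathcal F_\tau$, so $\mathcal F_\tau: C^\infty_r(X,\E_\tau)\to PW_{\tau,r}(\aL^*_\C\times K/M)$ is a topological isomorphism. One must separately check that $J$ (and its inverse) are continuous for the stated Fréchet topologies, which is routine since $J$ is built from Frobenius reciprocity, finite sums, and composition with fixed operators $s_i$.

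For (\textcolor{blue}{Level 3}) I would apply the same argument one step further. Introducing the auxiliary $K$-representation $\gamma$, I would note that $\Hom_K(E_\gamma, C^\infty_r(X,\E_\tau))\cong C^\infty_r(G,\gamma,\tau)$ and that $\Hom_K(E_\gamma,\cdot)$ is again exact (same averaging-projection argument), so Thm.~\ref{thm:PWsect} in (\textcolor{blue}{Level 2}), already established, transports to a topological isomorphism onto $\Hom_K(E_\gamma, PW_{\tau,r}(\aL^*_\C\times K/M))$. Then I would identify this last space, via Frobenius reciprocity evaluated at $k=1$ (the map $\widetilde{Frob}$ of \eqref{eq:dualFrob}), with $\{\varphi:\aL^*_\C\to\Hom_M(E_\gamma,E_\tau)\}$; the conditions $(2.i)$–$(2.iii)$ for the associated family $(\lambda,k)\mapsto\varphi(\lambda)\gamma(k^{-1})w$ translate into $(3.i)$–$(3.iii)$ — in particular the growth condition is unchanged in form since the $K$-action is by unitaries on $E_\gamma$ up to a fixed constant, and the intertwining condition is exactly condition $(3)$ of Def.~\ref{defn:intwcondLevel23}, which by Thm.~\ref{thm:equivintcond} is $(D.3)$. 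Finally one checks that under these identifications the (\textcolor{blue}{Level 2}) Fourier transform $\mathcal F_\tau$ becomes $\prescript{}{\gamma}{\mathcal F}_\tau$ of Def.~\ref{defn:FTLevel3}; this is immediate from \eqref{eq:expfctleft}, since $e^\tau_{\lambda,k}=l_k e^\tau_{\lambda,1}$ so that the $K$-equivariant family $\mathcal F_\tau\varphi(\lambda,\cdot)$ is determined by its value $\prescript{}{\gamma}{\mathcal F}_\tau\varphi(\lambda)$ at $k=1$.

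I expect the main obstacle to be verifying, cleanly and with the topologies, that the three translation steps — $(1.ii)_r\leftrightarrow(2.ii)_r$, $(D.a)\otimes E_\tau$-invariant $\leftrightarrow (D.2)$, and the corresponding passage to (\textcolor{blue}{Level 3}) — really are \emph{topological} equivalences and not merely bijections of sets. The bijections are already done: Prop.~\ref{prop:Level01}, Prop.~\ref{prop:Level123}, and Thm.~\ref{thm:equivintcond} handle the intertwining conditions; the identification \eqref{eq:variant1} and the map $J$ handle the spaces. What remains is the uniform comparison of semi-norms: one must show that $\|\pi_{\sigma,\lambda}(Y_1)\phi(\sigma,\lambda)\pi_{\sigma,\lambda}(Y_2)\|$ ranging over $\sigma\subset\tau|_M$ is comparable to $\|l_Y\psi(\lambda,k)\|_{E_\tau}$ ranging over $k\in K$, where $\psi=J(\phi)$ — using that $\tau|_M$ contains only finitely many $\sigma$ (so the otherwise-needed decay in $|\Lambda_\sigma|$ is vacuous and the sup over $\widehat M$ is a finite sup), that left $K$-differentiation $l_Y$ on $H^{\tau|_M}_\infty$ corresponds to right multiplication by $\pi_{\sigma,\lambda}(Y)$ on the compact-picture side, and that $J$ intertwines these actions up to the fixed finite-rank data $\{s_i\}$. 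Once this comparison is in hand, the open mapping theorem for Fréchet spaces (or, more simply, the explicit continuous inverse built from $J^{-1}$ and $\widetilde{Frob}^{-1}$) upgrades the bijection to a homeomorphism, and both assertions of the theorem follow.
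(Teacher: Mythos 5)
Your proposal follows essentially the same route as the paper, which offers no detailed proof beyond the remark that the theorem follows by ``combining Delorme's Paley-Wiener Thm.~\ref{thm:Delorme1} with the above identifications and observations'' --- i.e.\ exactly the chain $C^\infty_r(X,\E_\tau)\cong[C^\infty_r(G)\otimes E_\tau]^K\cong[PW_r(G)\otimes E_\tau]^K$, the map $J$ of \eqref{eq:mapJ} together with Prop.~\ref{prop:Level123}(3), and the translation of $(D.a)$ into $(D.2)$/$(D.3)$ via Prop.~\ref{prop:Level01} and Thm.~\ref{thm:equivintcond}. Your write-up in fact supplies more detail than the paper does, in particular on the semi-norm comparison (finiteness of $\{\sigma\subset\tau|_M\}$ making the $|\Lambda_\sigma|$-decay vacuous) needed to upgrade the bijection to a topological isomorphism.
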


Furthermore, by taking the union of all $r> 0$, the Paley-Wiener space $PW_{\tau}(\aL^*_\C \times K/M)$ is defined as
$$PW_{\tau}(\aL^*_\C \times K/M):= \bigcup_{r> 0} PW_{\tau,r}(\aL^*_\C \times K/M)$$
similar for $\prescript{}{\gamma}{PW}_{\tau}(\aL^*_\C)$. Equip $PW_{\tau}(\aL^*_\C \times K/M)$ and $\prescript{}{\gamma}{PW}_{\tau}(\aL^*_\C)$ with the inductive limit topology (compare the next Sect.~\ref{sect:PWS}).
Hence, by the above result (Thm.~\ref{thm:PWsect}), we also have a linear topological Fourier transform isomorphism from $C^\infty_c(X,\E_\tau)$ (resp. $C^\infty_c(G,\gamma, \tau)$) onto $PW_{\tau}(\aL^*_\C \times K/M)$ (resp. $\prescript{}{\gamma}{PW}_{\tau}(\aL^*_\C)$).

\section{On topological Paley-Wiener-Schwartz theorem for sections and its proof} \label{sect:PWS}

\subsubsection*{Distributional sections and their corresponding topology}
In (\ref{eq:Distspace}), we already introduced the vector space $C^{-\infty}_c(X,\E_\tau)$ by taking the taking the topological linear dual of $C^\infty(X,\E_{\tilde{\tau}})$.
We provide $C^{-\infty}_c(X,\E_\tau)$ with the \textit{strong dual topology}.
Actually, we know that $C^\infty(X,\E_{\tilde{\tau}})$ is a Fréchet space with semi-norm
\begin{equation} \label{eq:seminormf}
||h||_{\Omega,Y} := \sup_{g\in \Omega} |l_Y h(g)|, \;\;\;\; h \in C^\infty(X,\E_{\tilde{\tau}}),
\end{equation}
where $Y\in \U(\g)$ and $\Omega$ is a compact subset of $G$.
Furthermore, a subset $B \subset C^\infty(X,\E_{\tilde{\tau}})$ is called bounded, if
for each compact $\Omega \subset G$ and $Y\in \U(\g)$ there exists a constant $C_{\Omega,Y} >0$ such that
$\sup_{\varphi \in B} ||\varphi||_{\Omega,Y} \leq C_{\Omega,Y}.$
Shortly, every semi-norm is bounded on $B$.\\
The strong dual topology on $C^{-\infty}_c(X,\E_\tau)$ is a locally convex topology vector space given by the semi-norm system
\begin{equation} \label{eq:seminormTB}
p_B(T):=||T||_B = \sup_{\varphi \in B} |T(\varphi)| = \sup_{\varphi \in B} |\langle T,\varphi \rangle|, \;\;\; T \in C^{-\infty}_c(X,\E_\tau),
\end{equation}
where $B$ belongs to the family of all bounded subsets of $C^\infty(X,\E_{\tilde{\tau}})$.
Similarly, we equip $C^{-\infty}(X,\E_\tau)=(C^\infty_c(X,\E_{\tilde{\tau}}))'$ with the strong dual topology.
As an immediate consequence of theses dualities, the topologies on $C^{-\infty}_c(X,\E_\tau)$ and $C^{-\infty}(X,\E_\tau)$ induce the same topology on the space of distributions supported in a fixed compact subset $\Omega$ of $G$ (\cite{vandenBanDist}, Sect. 14). 
For example, one can take $\Omega=\overline{B}_r(o).$

A subset $B' \subset C^{-\infty}_c(X,\E_\tau)$ is bounded in the strong dual topology, if for each bounded $B \subset C^\infty(X,\E_{\tilde{\tau}})$, we have
\begin{equation} \label{eq:boundedT}
\sup_{T\in B'} p_B(T) = \sup_{T\in B', \varphi \in B} |T(\varphi)| < \infty.
\end{equation}
Since, by Schaefer (\cite{Schaeffer}, Cor.~1.6, p.~127), we know that all such sets $B'$ are equicontinuous, this means that there exist a continuous semi-norm $p$ on $C^\infty(X,\E_{\tilde{\tau}})$ and a constant $C>0$ such that
$$B' \subset \{T\in C^{-\infty}_c(X,\E_\tau) \;|\; |T(\varphi)| \leq C p(\varphi), \forall \varphi \in C^\infty(X,\E_{\tilde{\tau}})\}.$$
Let $Y_1,\dots,Y_n$ be a basis of $\g$, then for a multi-index $\alpha \in \N^n_0$, we set
$Y_\alpha :=Y^{\alpha_1}_1 \cdots Y^{\alpha_n}_n \in \U(\g).$
We may assume that the semi-norm $p$ has the form
\begin{equation} \label{eq:seminormp}
p(\varphi)=  \sum_{|\alpha| \leq m} ||\varphi||_{\Omega,\alpha} \stackrel{(\ref{eq:seminormf})}{=} \sum_{|\alpha| \leq m} \sup_{g \in \Omega} |l_{Y_\alpha}\varphi(g)|, \;\;\;\;\; \varphi \in C^\infty(X,\E_{\tilde{\tau}}),\forall \alpha
\end{equation}
for some $m \in \N_0$ and compact $\Omega \subset G$.

It is interesting to notice that $C^\infty(X,\E_{\tilde{\tau}})$ is a reflexive Fréchet space, even a Montel space, that is, it is reflexive and a subset is bounded if, and only if, it is relatively compact (\cite{Schaeffer}, p. 147). \\
Thus, since $C^{-\infty}_c(X,\E_\tau)$ is the strong dual space of a Montel space $C^\infty(X,\E_{\tilde{\tau}})$, we can deduce by Cor.~1 in (\cite{Schaeffer}, p.154)
that $C^{-\infty}_c(X,\E_\tau)$ is a bornological space,
that is a locally convex space on which each semi-norm $p_B$, which is bounded on bounded subsets, is continuous (\cite{Schaeffer}, Chap.2.8, p. 61).\\
This observation leads us to the following general result, which will play an imporant role in the proof of the Paley-Wiener-Schwartz theorem.
For bornological spaces, bounded linear maps are continuous (\cite{Schaeffer}, Thm.~8.3., p.~62), hence, we obtain the following.

\begin{lem} \label{lem:Montelspace}
Let $W$ be any locally convex topological vector space and consider a linear map
$$A:C^{-\infty}_c(X,\E_\tau) \rightarrow W.$$
Then $A$ is continuous if, and only if, $A(B')$ is bounded in $W$, for every bounded subset $B' \subset C^{-\infty}_c(X,\E_\tau)$. \QEDA
\end{lem}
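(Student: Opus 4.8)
The plan is to derive Lemma~\ref{lem:Montelspace} as a direct consequence of the structural facts about $C^{-\infty}_c(X,\E_\tau)$ collected just above its statement, namely that this space is \emph{bornological}. Recall that the chain of implications is: $C^\infty(X,\E_{\tilde\tau})$ is a Montel space (hence, in particular, barrelled and reflexive); therefore its strong dual $C^{-\infty}_c(X,\E_\tau)$ is bornological by Schaefer (\cite{Schaeffer}, p.~154, Cor.~1); and on a bornological locally convex space every linear map into a locally convex space which is bounded (i.e.\ maps bounded sets to bounded sets) is already continuous (\cite{Schaeffer}, Thm.~8.3, p.~62). So the substance of the proof is simply to chain these citations correctly and to observe that the converse direction is the trivial one.

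Concretely, I would argue as follows. First, the ``only if'' direction: if $A$ is continuous and linear, then it is automatically bounded, since continuous linear maps between locally convex spaces carry bounded sets to bounded sets --- indeed, if $B'\subset C^{-\infty}_c(X,\E_\tau)$ is bounded and $q$ is any continuous seminorm on $W$, then $q\circ A$ is a continuous seminorm on $C^{-\infty}_c(X,\E_\tau)$, hence bounded on $B'$, so $\sup_{T\in B'} q(A(T))<\infty$; as $q$ was arbitrary, $A(B')$ is bounded in $W$. Second, the ``if'' direction: suppose $A(B')$ is bounded in $W$ for every bounded $B'$. Let $q$ be an arbitrary continuous seminorm on $W$; then $p:=q\circ A$ is a seminorm on $C^{-\infty}_c(X,\E_\tau)$, and for every bounded $B'$ the set $A(B')$ is bounded in $W$, so $\sup_{T\in B'} p(T)=\sup_{T\in B'}q(A(T))<\infty$, i.e.\ $p$ is bounded on bounded subsets. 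Since $C^{-\infty}_c(X,\E_\tau)$ is bornological, every seminorm that is bounded on bounded sets is continuous, so $p=q\circ A$ is continuous. As $q$ ranges over a generating family of continuous seminorms on $W$, this shows $A$ is continuous.

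The only genuine point that needs care --- and the closest thing to an ``obstacle'' --- is making sure the hypotheses for invoking bornologicality are in place, i.e.\ that $C^\infty(X,\E_{\tilde\tau})$ really is Montel (so that the strong dual is bornological). This is exactly what was established in the paragraph preceding the lemma, citing (\cite{Schaeffer}, p.~147) for the Montel property and (\cite{Schaeffer}, p.~154, Cor.~1) for the passage to the bornological strong dual; so in the write-up I would just refer back to that discussion rather than reprove it. A secondary (minor) subtlety is that ``bounded linear map $\Rightarrow$ continuous'' for bornological spaces is stated for linear maps into locally convex spaces, which matches our hypothesis on $W$ verbatim, so no extra work is needed there. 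Everything else is the routine seminorm bookkeeping indicated above.

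\begin{proof}
The ``only if'' implication is immediate: a continuous linear map between locally convex spaces maps bounded sets to bounded sets. Indeed, if $A$ is continuous and $B'\subset C^{-\infty}_c(X,\E_\tau)$ is bounded, then for any continuous seminorm $q$ on $W$ the composite $q\circ A$ is a continuous seminorm on $C^{-\infty}_c(X,\E_\tau)$, hence bounded on $B'$; as $q$ was arbitrary, $A(B')$ is bounded in $W$.

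For the ``if'' implication, recall from the discussion above that $C^\infty(X,\E_{\tilde\tau})$ is a Montel space, so its strong dual $C^{-\infty}_c(X,\E_\tau)$ is bornological (\cite{Schaeffer}, p.~154, Cor.~1): every seminorm on it which is bounded on bounded subsets is continuous. Now let $q$ be an arbitrary continuous seminorm on $W$ and set $p:=q\circ A$, a seminorm on $C^{-\infty}_c(X,\E_\tau)$. If $B'\subset C^{-\infty}_c(X,\E_\tau)$ is bounded, then by hypothesis $A(B')$ is bounded in $W$, so $\sup_{T\in B'}p(T)=\sup_{T\in B'}q(A(T))<\infty$; thus $p$ is bounded on every bounded subset and hence continuous. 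Since $W$ carries the topology generated by such seminorms $q$, this shows that $A$ is continuous.
\end{proof}
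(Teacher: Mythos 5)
Your proof is correct and follows exactly the route the paper intends: the lemma is stated with an immediate QED precisely because it is the combination of the preceding observations ($C^\infty(X,\E_{\tilde\tau})$ Montel, hence $C^{-\infty}_c(X,\E_\tau)$ bornological, hence bounded linear maps into locally convex spaces are continuous by Schaefer, Thm.~8.3). Your explicit seminorm bookkeeping just fills in the routine details of those citations.
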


\noindent
Let $Y_1,\dots,Y_k$ be a basis of $\U(\kk)$, then for a multi-index $\alpha \in \N^k_0$, we have 
$Y_\alpha:=Y^{\alpha_1}_1 \cdots Y^{\alpha_k}_k \in \U(\kk).$
Now we are in the position to define Paley-Wiener-Schwartz space for sections. 

\begin{defn}[Paley-Wiener-Schwartz space for sections in (\textcolor{blue}{Level 2}) and (\textcolor{blue}{Level 3})]
\label{def:PWSspace}
\noindent
\begin{itemize}
\item[(a)] For $r> 0$, let $PWS_{\tau,r}(\aL^*_\C \times K/M)$ be the space of sections
$\psi \in C^\infty(\aL^*_\C \times K/M, \E_{\tau|_M})$
be such that
\begin{itemize}
\item[$(2.i)$] the section $\psi$ is holomorphic in $\lambda \in \aL^*_\C$, i.e. $\psi \in \emph\Hol(\aL^*_\C, H^{\tau|_M}_\infty)$.
\item[$(2.iis)_r$] (growth condition) for all multi-indices $\alpha$, there exist $N \in \N_0$ and a positive constant $C_{r,N,\alpha}$ such that
$$||l_{Y_\alpha} \psi(\lambda,k)||_{E_\tau} \leq C_{r,N,\alpha} (1+|\lambda|^2)^{N+\frac{|\alpha|}{2}} e^{r|\emph \Repart(\lambda)|},\;\;\; k \in K.$$
\item[$(2.iii)$] (intertwining condition) (D.2) from Thm.~\ref{thm:equivintcond}.
\end{itemize}

\item[(b)] By considering an additional $K$-type, let $\prescript{}{\gamma}{PWS}_{\tau,r}(\aL^*_\C)$ be the space of functions
$$\aL^*_\C \ni \lambda \mapsto \varphi(\lambda) \in \emph\Hom_M(E_\gamma,E_{\tau})$$
be such that
\begin{itemize}
\item[$(3.i)$] the function $\varphi$ is holomorphic in $\lambda \in \aL^*_\C$.
\item[$(3.iis)_r$] (growth condition) there exist $N\in \N_0$ and a positive constant $C_{r,N}$ such that
$$
||\varphi(\lambda)||_{\text{op}} \leq C_{r,N} (1+|\lambda|^2)^{N} e^{r|\emph \Repart(\lambda)|}.
$$
\item[$(3.iii)$] (intertwining condition) (D.3) from Thm.~\ref{thm:equivintcond}.
\end{itemize}
\end{itemize}
\end{defn}

For all $r \geq 0$ and $N\in \N_0$, we consider
 $$PWS_{\tau,r,N} := \{\psi \in PWS_{\tau,r}(\aL^*_\C \times K/M) \;|\; ||\psi||_{r,N,\alpha}< \infty, \forall \alpha \},$$
with semi-norms
\begin{equation*} \label{eq:seminormPWS}
||\psi||_{r,N,\alpha} := \sup_{\lambda \in \aL^*_\C, \; k \in K/M} (1+|\lambda|^2)^{-(N+\frac{|\alpha|}{2})} e^{-r|\Repart(\lambda)|}||l_{Y_\alpha} \psi(\lambda,k)||_{E_\tau},\;\; \forall \alpha, k \in K.
\end{equation*}
This gives $PWS_{\tau,r,N}$ the structure of a Fréchet space.
We set
$PWS_{\tau}(\aL^*_\C \times K/M):=\bigcup_{r\geq0} \bigcup_{N\in \N_0} PWS_{\tau,r,N}(\aL^*_\C \times K/M)$ and equip it with the locally convex inductive limit topology.
It is the finest locally convex topology on $PWS_{\tau}(\aL^*_\C \times K/M)$ such that all the embeddings
$PWS_{\tau,r,N} \stackrel{i_{r,N}}{\hookrightarrow} PWS_{\tau}(\aL^*_\C \times K/M)$
are continuous.
Furthermore, this topology is characterized by the following property.
A linear map $$A: PWS_{\tau} \rightarrow W,$$
where $W$ is any locally convex space, is continuous if, and only if, all the maps
$$PWS_{\tau,r,N}  \stackrel{i_{r,N}}{\hookrightarrow} PWS_{\tau} \stackrel{A}{\longrightarrow} W$$
are continuous, i.e., $A \circ i_{r,N}$ are continuous.
The exactly same procedure, can be done for $\prescript{}{\gamma}{PWS}_{\tau}(\aL^*_\C)$.

\noindent
We are now in the position to state the main theorem.
\begin{thm}[Topological Paley-Wiener-Schwartz theorem for sections] \label{thm:PWSsect}
\noindent
\begin{itemize}
\item[(a)]Let $(\tau,E_\tau)$ be a $K$-representation with associated homogeneous vector bundle $\E_\tau$.\\
Then, for each $r\geq0$, the Fourier transform $\mathcal{F}_\tau$ is a linear
bijection between the two spaces $C^{-\infty}_r(X,\E_\tau)$ and the Paley-Wiener-Schwartz space $PWS_{\tau,r}(\aL^*_\C \times K/M)$.
Moreover, it is a linear topological isomorphism from $C^{-\infty}_c(X,\E_\tau)$ onto $PWS_{\tau}(\aL^*_\C \times K/M)$.

\item[(b)] Similarly, if we consider an additional $K$-representation $(\gamma,E_\gamma)$ with associated homogeneous vector bundle $\E_\gamma$.
Then, the Fourier transform $\prescript{}{\gamma}{\mathcal{F}}_{\tau}$ is a linear
bijection between the two spaces
$C^{-\infty}_r(G,\gamma, \tau)$ and $\prescript{}{\gamma}{PWS}_{\tau,r}(\aL^*_\C)$, for each $r\geq 0$, and a linear topological isomorphism from $C^{-\infty}_c(G,\gamma, \tau)$ onto $\prescript{}{\gamma}{PWS}_{\tau}(\aL^*_\C)$.
\end{itemize}
\end{thm}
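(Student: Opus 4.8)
The strategy is to derive the distributional statement from the smooth Paley--Wiener theorem (Thm.~\ref{thm:PWsect}) together with Camporesi's inversion formula, following the technique of van den Ban--Schlichtkrull \cite{vandenBanDist}, and to obtain the topological part from the bornological property of $C^{-\infty}_c(X,\E_\tau)$ recorded in Lem.~\ref{lem:Montelspace}. I first show that $\mathcal{F}_\tau$ maps $C^{-\infty}_r(X,\E_\tau)$ into $PWS_{\tau,r}(\aL^*_\C\times K/M)$. Holomorphy of $\lambda\mapsto\mathcal{F}_\tau T(\lambda,\cdot)$ follows from the entireness of $\lambda\mapsto e^\tau_{\lambda,k}$ together with weak holomorphy, and $e^\tau_{\lambda,km}=\tau(m)^{-1}e^\tau_{\lambda,k}$ for $m\in M$ gives $\mathcal{F}_\tau T(\lambda,\cdot)\in H^{\tau|_M}_\infty$. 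For the growth condition $(2.iis)_r$ one uses that $T\in C^{-\infty}_r(X,\E_\tau)$ is dominated by a fixed seminorm of the form (\ref{eq:seminormp}) with $\Omega=\overline{B}_r$ and some order $m$, and estimates the $\U(\g)$-derivatives in $g$ and the $\U(\kk)$-derivatives $l_{Y_\alpha}$ in $k$ of $e^\tau_{\lambda,k}(g)$ over $\overline{B}_r$: each derivative produces at most one extra power of $(1+|\lambda|)$ from $a(g^{-1}k)^{-(\lambda+\rho)}$, while the exponential is bounded by $e^{r|\Repart(\lambda)|}$ because $\|\log a(g^{-1}k)\|\le r$ on $\overline{B}_r$; this yields the exponents $N+|\alpha|/2$ with $N$ depending only on $m$. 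For the intertwining condition $(2.iii)=(D.2)$ I use the $K\times K$--invariant approximate identity $\eta_\epsilon\in C^\infty_c(G,\tau,\tau)$ of (\ref{eq:etag}): since $T*\eta_\epsilon\in C^\infty_{r+\epsilon}(X,\E_\tau)$, the function $\mathcal{F}_\tau(T*\eta_\epsilon)$ lies in the smooth space $PW_{\tau,r+\epsilon}$ and hence satisfies $(D.2)$; by Prop.~\ref{prop:FTconv} and Rem.~\ref{req:leftconv} one has $\mathcal{F}_\tau(T*\eta_\epsilon)(\lambda,\cdot)=\prescript{}{\tau}{\mathcal{F}}_{\tau}\eta_\epsilon(\lambda)\circ\mathcal{F}_\tau T(\lambda,\cdot)$, post-composition by an $M$--endomorphism of $E_\tau$; the operators this induces on each derived quotient $H^{\tau|_M,\lambda}_{\infty,(m)}$ converge to the identity by Cor.~\ref{cor:FTconv} and the Cauchy estimates, and since every subrepresentation $W$ occurring in an intertwining datum is closed, the relation $t\circ(\mathcal{F}_\tau(T*\eta_\epsilon))_{\overline{\xi}}\in W$ passes to the limit $\epsilon\to0$. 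Thus $\mathcal{F}_\tau T$ satisfies $(D.2)$.

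The same regularization gives injectivity: if $\mathcal{F}_\tau T=0$ then $\mathcal{F}_\tau(T*\eta_\epsilon)=\prescript{}{\tau}{\mathcal{F}}_{\tau}\eta_\epsilon(\lambda)\circ\mathcal{F}_\tau T=0$, so $T*\eta_\epsilon=0$ by Thm.~\ref{thm:PWsect}, and $T=\lim_{\epsilon\to0}T*\eta_\epsilon=0$. For surjectivity, given $\psi\in PWS_{\tau,r}$ I define a functional on $C^\infty_r(X,\E_{\tilde\tau})$ by
\[
\langle T_\psi,h\rangle:=c\int_{i\aL^*_+}\int_{K/M}\big\langle\psi(\lambda,k),\,\mathcal{F}_{\tilde\tau}h(\lambda,k)\big\rangle\,dk\,d\mu_{\mathrm{Pl}}(\lambda),
\]
where $d\mu_{\mathrm{Pl}}$ and $c$ are the Plancherel measure and constant of Camporesi's inversion formula for sections; the density is of polynomial growth on $i\aL^*$, on which $e^{r|\Repart(\lambda)|}\equiv1$, so the integrand is bounded by a polynomial times the rapid decay of $\mathcal{F}_{\tilde\tau}h$ coming from $(2.ii)_r$, the integral converges, and one reads off $|\langle T_\psi,h\rangle|\le C\,\|\psi\|_{r,N,0}\,\|\mathcal{F}_{\tilde\tau}h\|_{r,N',0}$, which by the continuity of $\mathcal{F}_{\tilde\tau}$ in Thm.~\ref{thm:PWsect} is bounded by a seminorm of $h$; hence $T_\psi\in C^{-\infty}_r(X,\E_\tau)$. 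To identify $\mathcal{F}_\tau T_\psi$ I compute $T_\psi*\eta_\epsilon$ from this formula using $\mathcal{F}_{\tilde\tau}(l_g h)(\lambda,\cdot)=\pi_{\tilde\tau,\lambda}(g)\mathcal{F}_{\tilde\tau}h(\lambda,\cdot)$ (Rem.~\ref{req:leftconv}(b)), the unitarity of $\pi_{\tau,\lambda}$ on $i\aL^*$, and Camporesi's inversion formula; this identifies $T_\psi*\eta_\epsilon$ with $\mathcal{F}_\tau^{-1}(\prescript{}{\tau}{\mathcal{F}}_{\tau}\eta_\epsilon(\lambda)\circ\psi)\in C^\infty_{r+\epsilon}(X,\E_\tau)$, and letting $\epsilon\to0$, using $\prescript{}{\tau}{\mathcal{F}}_{\tau}\eta_\epsilon\to\Id$, gives $\mathcal{F}_\tau T_\psi=\psi$ while $\supp T_\psi\subseteq\bigcap_{\epsilon>0}\overline{B}_{r+\epsilon}=\overline{B}_r$.

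For the topological statement I apply Lem.~\ref{lem:Montelspace}: since $C^{-\infty}_c(X,\E_\tau)$ is bornological, $\mathcal{F}_\tau$ is continuous as soon as it sends bounded sets to bounded sets. A bounded subset $B'\subseteq C^{-\infty}_r(X,\E_\tau)$ is equicontinuous, hence dominated by a single seminorm (\ref{eq:seminormp}) with fixed order $m$ and $\Omega=\overline{B}_r$, and the estimate above then places $\mathcal{F}_\tau(B')$ in a bounded subset of one fixed $PWS_{\tau,r,N}$. Conversely, the explicit formula for $T_\psi$ shows that $\psi\mapsto T_\psi$ is bounded on bounded subsets of each Fréchet space $PWS_{\tau,r,N}$ into $C^{-\infty}_r(X,\E_\tau)$, hence continuous, and by the universal property of the inductive-limit topology this gives continuity of $\mathcal{F}_\tau^{-1}$ on $PWS_\tau(\aL^*_\C\times K/M)$. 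Passing to the inductive limits over $r$ (and $N$) upgrades the bijections $C^{-\infty}_r\cong PWS_{\tau,r}$ to the topological isomorphism $C^{-\infty}_c(X,\E_\tau)\cong PWS_\tau(\aL^*_\C\times K/M)$. Part (b) follows from part (a) exactly as in the smooth case (Thm.~\ref{thm:PWsect}, Thm.~\ref{thm:equivintcond}): evaluating the Frobenius reciprocity at $k=1$ realises $\prescript{}{\gamma}{\mathcal{F}}_{\tau}$ as the restriction of $\mathcal{F}_\tau$ to the $\gamma$--covariant functions, and $(D.3)$ is $(D.2)$ applied to $(\lambda,k)\mapsto\varphi(\lambda)\gamma(k^{-1})w$, so both the bijection and the topological isomorphism transfer.

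The step I expect to be the main obstacle is the surjectivity, and more precisely the identification $\mathcal{F}_\tau T_\psi=\psi$: this is where Camporesi's Plancherel and inversion theorems are genuinely used, and where one must be careful about the exact normalisations of the pairing, of the Plancherel density, and of the convolutions $T_\psi*\eta_\epsilon$ (in particular the support only stabilises to $\overline{B}_r$ in the limit $\epsilon\to0$). A secondary point that has to be made precise is that, on each derived quotient $H^{\tau|_M,\lambda}_{\infty,(m)}$, post-composition by $\prescript{}{\tau}{\mathcal{F}}_{\tau}\eta_\epsilon$ converges to the identity in operator norm, so that the closedness of $W$ can be exploited; this follows from Cor.~\ref{cor:FTconv} and the Cauchy estimates for holomorphic families.
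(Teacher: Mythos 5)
Your proposal is correct and follows essentially the same route as the paper: regularization by the $K\times K$-invariant approximate identity $\eta_\epsilon$ together with the smooth Paley--Wiener theorem for the bijection, Camporesi's inversion formula for surjectivity and for the continuity of the inverse, and the bornological property of $C^{-\infty}_c(X,\E_\tau)$ (Lem.~\ref{lem:Montelspace}) plus the universal property of the inductive limit for the topological statement. The one local difference is your verification of the intertwining condition for $\mathcal{F}_\tau T$: you use the right convolution $T*\eta_\epsilon$ and the convergence of the post-composition operators $(\prescript{}{\tau}{\mathcal{F}}_{\tau}\eta_\epsilon)_{\overline{\xi}}\to\Id$ on the derived quotients, whereas the paper convolves on the left with a scalar delta-sequence $\delta_\epsilon$ and exploits that $\pi_\xi(\delta_\epsilon)$ preserves the $G$-invariant closed subspace $W$ exactly, so only the final limit uses closedness; both arguments are valid. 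One small imprecision: Camporesi's inversion formula is a sum over cuspidal parabolics $Q$ and parameters $\nu\in A^\tau_Q\subset\aL^*_{M_Q}$, so the Fourier transforms are evaluated at $\nu+\lambda$ with $\Repart(\nu+\lambda)=\nu$ generally nonzero; your claim that $e^{r|\Repart(\lambda)|}\equiv 1$ on the integration set holds only for the $Q=P$ term, but since the finitely many $\nu$ contribute only fixed constants $e^{r|\nu|}$, the convergence and continuity estimates go through unchanged.
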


\begin{req}
Delorme proved in his paper (\cite{Delorme}), the Paley-Wiener theorem in (\textcolor{blue}{Level 1}) for Hecke algebra
\begin{equation} \label{eq:Heckealg}
\mathcal{H}(G,K):=C^{-\infty}_{r=0}(G)_K \cong \U(\g) \otimes_{\U(\kk)} C^\infty(K)_K,
\end{equation}
which consists of all $K\times K$-finite distributions on $G$ supported by $K \subset G.$
\end{req}

\subsubsection*{Harish-Chandra inversion and Plancherel Theorem for sections}
In order to prove Thm.~\ref{thm:PWSsect}, we need the Harish-Chandra Plancherel inversion formula for sections over homogeneous vector bundles. 

\begin{thm}[Plancherel Theorem for sections, \cite{Camporesi}, Thm. 3.4 \& Thm. 4.3]  \label{thm:HCinvSect}
Let $\mathcal{Q}$ be a complete set of representatives of association classes of cuspidal parabolic subgroups
$Q=M_QA_QN_Q$ with $Q \supset P=MAN$ and  $A_Q\subset A$. 
We have $\aL^*=\aL^*_Q \oplus \aL^*_{M_Q}$.\\
Then, there exists a finite set $A^\tau_Q \subset \aL^*_{M_Q} \subset \aL^*$ and for $\nu \in  A^\tau_Q$, there exists an analytic function of at most polynomial growth
$$\mu_\nu^Q : i\aL^*_Q \longrightarrow \emph \End_M(E_\tau)$$ 
such that for each $f\in C^\infty_c(X,\E_\tau)$, we have
\begin{eqnarray*}
f(e)&=&\sum_{Q \in \mathcal{Q}} \sum_{\nu \in A^\tau_Q} \int_{i\aL^*_Q} \int_K \tau(k) \mu_\nu^Q (\lambda) \mathcal{F}_\tau(f)(\nu+\lambda, k) \;dk\; d\lambda. \QEDA
\end{eqnarray*}
\end{thm}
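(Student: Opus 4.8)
The plan is to obtain the formula by transporting Harish-Chandra's operator-valued Plancherel theorem for $C^\infty_c(G)$ to the bundle setting, using the identification $C^\infty_c(X,\E_\tau)\cong[C^\infty_c(G)\otimes E_\tau]^K$ and the Frobenius reciprocity already exploited in Prop.~\ref{prop:Level123}. Concretely, I would fix $f\in C^\infty_c(X,\E_\tau)$, pick a basis $v_1,\dots,v_{d_\tau}$ of $E_\tau$ with dual basis $\tilde v_1,\dots,\tilde v_{d_\tau}$ of $E_{\tilde\tau}$, and set $f_i(g):=\langle\tilde v_i,f(g)\rangle$. Since $\supp f\subset\overline B_r(o)$ is stable under right translation by $K$, each $f_i$ lies in $C^\infty_c(G)$, and $f(e)=\sum_i f_i(e)\,v_i$; this is precisely the decomposition $f=\sum_i f_i\otimes v_i$ underlying Prop.~\ref{prop:Level123}.

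Next I would apply Harish-Chandra's Plancherel formula (in its classical form for $C^\infty_c(G)$) to each scalar function $f_i$:
\[
f_i(e)=\sum_{Q\in\mathcal Q}\ \sum_{\omega}\ \int_{i\aL^*_Q}\Tr\!\big(\pi^Q_{\omega,\lambda}(f_i)\big)\,m_{Q,\omega}(\lambda)\,d\lambda,
\]
where $Q=M_QA_QN_Q$ runs over the standard representatives of association classes of cuspidal parabolics, $\omega$ over the discrete series of $M_Q$, $\pi^Q_{\omega,\lambda}=\mathrm{Ind}_Q^G(\omega\otimes e^{\lambda}\otimes 1)$, and $m_{Q,\omega}$ is Harish-Chandra's Plancherel density. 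The crucial step is to repackage $\sum_i\Tr(\pi^Q_{\omega,\lambda}(f_i))\,v_i\in E_\tau$. Because $f(gk)=\tau(k)^{-1}f(g)$, the $E_\tau$-valued operator $\sum_i\pi^Q_{\omega,\lambda}(f_i)\otimes v_i=\int_G\pi^Q_{\omega,\lambda}(g)\otimes f(g)\,dg$ only feels the $\tau$-isotypic block of $\pi^Q_{\omega,\lambda}|_K$; Frobenius reciprocity between $K$ and $K\cap M_Q$ — the exact analogue, for the Eisenstein integrals attached to $Q$, of the computation in Prop.~\ref{prop:Level123}(2)–(3) — then rewrites the character contraction as $\int_K\tau(k)\,c^Q_\omega(\lambda)\,\mathcal F_\tau(f)(\nu_\omega+\lambda,k)\,dk$. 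Here $\nu_\omega\in\aL^*_{M_Q}\subset\aL^*$ is the $\aL^*_{M_Q}$-component of the Harish-Chandra parameter of $\omega$, and $c^Q_\omega(\lambda)\in\End_M(E_\tau)$ is the $\tau$-type restriction of the $c$-function/intertwining normalisation; the parameter shift $\lambda\mapsto\nu_\omega+\lambda$ is forced by induction in stages together with Casselman's subrepresentation theorem (so that the $\tau$-radial matrix coefficient of $\pi^Q_{\omega,\lambda}$ at $e$ is read off from the minimal-parabolic kernel $e^\tau_{\nu_\omega+\lambda,k}$ of Definition~\ref{defn:FTsect}, which is exactly why one single transform $\mathcal F_\tau$ on all of $\aL^*_\C$ suffices). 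Only the $\omega$ whose restriction to $K\cap M_Q$ shares a $(K\cap M_Q)$-type with $\tau|_{K\cap M_Q}$ survive; there are finitely many such $\omega$, and the finite set of their parameters $\nu_\omega$ is $A^\tau_Q\subset\aL^*_{M_Q}$. Setting $\mu^Q_\nu:=\sum_{\omega:\,\nu_\omega=\nu}m_{Q,\omega}\,c^Q_\omega$ for $\nu\in A^\tau_Q$ then assembles the terms into the stated identity.

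It remains to justify convergence and the regularity claim. The sum over $\mathcal Q$ is finite; for fixed $Q$ and $\nu$ the integral over $i\aL^*_Q$ converges absolutely because $\lambda\mapsto\mathcal F_\tau(f)(\nu+\lambda,k)$ is rapidly decreasing along the imaginary directions — this follows from Thm.~\ref{thm:PWsect}, whose growth condition $(2.ii)_r$ in Definition~\ref{def:PWsect} gives super-polynomial decay on $\nu+i\aL^*_Q$ — while $\mu^Q_\nu$ grows at most polynomially; and interchanging $\sum_i(\cdot)\,v_i$ with the integrals is harmless since $E_\tau$ is finite dimensional. I expect the genuine obstacle, and the place where the depth of the statement sits, to be the analysis of the density $\mu^Q_\nu$: one must import Harish-Chandra's explicit description of the Plancherel measure (its meromorphic structure in terms of $c$-functions), verify that after projection onto the $\tau$-isotypic components it yields an $\End_M(E_\tau)$-valued map that is analytic and of at most polynomial growth on $i\aL^*_Q$, and normalise the Eisenstein integrals so that the reproducing kernel is exactly $\tau(k)\,\mu^Q_\nu(\lambda)\,\mathcal F_\tau(f)(\nu+\lambda,k)$ rather than merely proportional to it. Everything else is bookkeeping built on Prop.~\ref{prop:Level123} and Frobenius reciprocity between $K$ and $K\cap M_Q$.
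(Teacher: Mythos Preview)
The paper does not prove this theorem: it is quoted from Camporesi \cite{Camporesi} (Thm.~3.4 and Thm.~4.3) and the statement is closed with a \verb|\QEDA|. So there is no ``paper's own proof'' to compare against; the authors treat the result as input.

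Your sketch is, in outline, the right derivation and essentially the route Camporesi takes: start from Harish-Chandra's operator-valued Plancherel formula on $C^\infty_c(G)$, project onto the $\tau$-isotypic block via the identification $C^\infty_c(X,\E_\tau)\cong[C^\infty_c(G)\otimes E_\tau]^K$, and use induction in stages together with an embedding of each relevant discrete series $\omega$ of $M_Q$ into a minimal principal series of $M_Q$ to rewrite the $\tau$-radial contribution of $\pi^Q_{\omega,\lambda}$ in terms of $\mathcal F_\tau(f)(\nu_\omega+\lambda,\cdot)$. The finiteness of $A^\tau_Q$ follows, as you say, from the fact that only finitely many discrete series of $M_Q$ contain a given $K\cap M_Q$-type. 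The part you correctly single out as the real work---defining $\mu^Q_\nu$ precisely, showing it lands in $\End_M(E_\tau)$, and establishing analyticity and polynomial growth on $i\aL^*_Q$---is exactly what requires the explicit Harish-Chandra $\mu$-function and the normalisation of Eisenstein integrals, and is carried out in Camporesi's paper rather than here.

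Two small comments. First, invoking Thm.~\ref{thm:PWsect} for the rapid decay of $\mathcal F_\tau(f)$ along $\nu+i\aL^*_Q$ is logically admissible in this paper (Thm.~\ref{thm:PWsect} is deduced from Delorme and does not use Thm.~\ref{thm:HCinvSect}), but for a self-contained argument one would obtain that decay directly from repeated integration by parts against elements of $\U(\g)$, without appealing to the full Paley-Wiener theorem. Second, the passage ``Casselman's subrepresentation theorem'' hides a genuine choice: a discrete series $\omega$ of $M_Q$ may embed into several minimal principal series of $M_Q$, so $\nu_\omega$ is not canonically a single point but a finite set, and the corresponding $c^Q_\omega$ depends on the embedding; Camporesi's normalisation absorbs this ambiguity into $\mu^Q_\nu$, but in your write-up you would need to fix the convention explicitly.
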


Note that $A^\tau_P=\{0\}$. 

\begin{cor} \label{cor:HCinvSect}
With the notations above, let $f\in C^\infty_c(X,\E_\tau)$ and $\varphi \in C^\infty_c(X,\E_{\tilde{\tau}})$.
Then
\begin{equation} \label{eq:HCPformula}
\int_G \langle \varphi(g), f(g) \rangle_\tau \; dg= 
\sum_{Q \in \mathcal{Q}} \sum_{\nu\in A^\tau_Q} \int_{i\aL^*_Q} \int_K 
\langle \mathcal{F}_{\tilde{\tau}}(\varphi)(-\nu-\lambda,k) ,\mu_\nu^Q(\lambda)\mathcal{F}_\tau(f)(\nu+\lambda,k)\rangle_\tau \;dk\; d\lambda.
\end{equation}
\end{cor}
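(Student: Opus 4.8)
The plan is to deduce Corollary~\ref{cor:HCinvSect} from the Plancherel inversion formula of Theorem~\ref{thm:HCinvSect} by a standard \emph{pairing/convolution} argument, exactly in the spirit of how one passes from ``$f(e)=\dots$'' to ``$\langle \varphi,f\rangle=\dots$'' in the scalar Helgason theory. First I would fix $f\in C^\infty_c(X,\E_\tau)$ and $\varphi\in C^\infty_c(X,\E_{\tilde\tau})$ and observe that the $G$-invariant pairing $\langle\varphi,f\rangle$ is obtained by integrating over $X=G/K$ the pointwise contraction $\langle\varphi(g),f(g)\rangle_\tau$, which is right $K$-invariant since $\tau$ and $\tilde\tau$ are dual. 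The idea is to introduce the $(\tilde\tau,\tilde\tau)$-spherical convolution product $h:=\varphi^\vee * f$ (or a suitable scalar convolution as in Remark~\ref{req:leftconv}), chosen so that evaluating $h$ at the identity $e$ reproduces the pairing $\int_G\langle\varphi(g),f(g)\rangle_\tau\,dg$; here one uses $\varphi^\vee(g):=\varphi(g^{-1})$ and the substitution $x\mapsto x^{-1}$ together with unimodularity of $G$.

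Next I would apply Theorem~\ref{thm:HCinvSect} to $h$ (after checking $h\in C^\infty_c(X,\E_{\tilde\tau})$, with support controlled by $\mathrm{supp}(\varphi)\cup\mathrm{supp}(f)$ via the convolution support estimate) to get
\begin{equation*}
\int_G\langle\varphi(g),f(g)\rangle_\tau\,dg = h(e)=\sum_{Q\in\mathcal{Q}}\sum_{\nu\in A^{\tilde\tau}_Q}\int_{i\aL^*_Q}\int_K \tilde\tau(k)\,\mu^Q_\nu(\lambda)\,\mathcal{F}_{\tilde\tau}(h)(\nu+\lambda,k)\,dk\,d\lambda.
\end{equation*}
Then I would compute $\mathcal{F}_{\tilde\tau}(h)$ in terms of $\mathcal{F}_{\tilde\tau}(\varphi)$ and $\mathcal{F}_\tau(f)$ using the convolution theorem Proposition~\ref{prop:FTconv} (and Remark~\ref{req:leftconv}), which turns the Fourier transform of a convolution into a composition of the two Fourier transforms; the reflection $\varphi\mapsto\varphi^\vee$ is what produces the sign flip $\lambda\mapsto-\lambda$ in the argument $\mathcal{F}_{\tilde\tau}(\varphi)(-\nu-\lambda,k)$. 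Feeding this back and using that on the tempered axis $i\aL^*_Q$ one has $\overline{\nu+\lambda}$ behaving as $-\nu-\lambda$ (since $\nu$ is real and $\lambda\in i\aL^*_Q$), the trace/pairing over $K$ against $\tilde\tau(k)$ collapses to the pointwise pairing $\langle\,\cdot\,,\,\cdot\,\rangle_\tau$ appearing in \eqref{eq:HCPformula}, giving precisely the claimed identity.

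The main obstacle I anticipate is purely bookkeeping rather than conceptual: matching conventions so that the factor $\tilde\tau(k)$, the plc density $\mu^Q_\nu$, and the two Fourier transforms land in the correct order and with the correct variable (the dual representation $\tilde\tau$ versus $\tau$, the $-\nu-\lambda$ versus $\nu+\lambda$, and the side — left versus right — on which $K$ acts). Concretely one must verify that $\mathcal{F}_{\tilde\tau}(\varphi^\vee)(\nu+\lambda,k)$ equals (up to the $K$-action already accounted for) $\mathcal{F}_{\tilde\tau}(\varphi)(-\nu-\lambda,k)$, which follows from the explicit form \eqref{eq:exptau} of $e^\tau_{\lambda,k}$ and the identity $a(g)^{-1}=a(g^{-1})$ under the Iwasawa decomposition, but requires care. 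A secondary point is to justify the interchange of the $G$-integration defining the pairing with the convolution integral (Fubini, legitimate by compact support and smoothness) and to confirm $\varphi^\vee * f$ is genuinely a section of $\E_{\tilde\tau}$ with the right equivariance; both are routine given the material already developed in Section~\ref{sect:FT}. Once these identifications are pinned down, \eqref{eq:HCPformula} is immediate.
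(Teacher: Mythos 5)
Your overall strategy---reduce the pairing to an evaluation at $e$ of a convolution, apply Theorem~\ref{thm:HCinvSect}, then use the convolution property of the Fourier transform---is the same family of ideas as the paper's proof, but the central construction as you describe it does not go through. The object $h:=\varphi^\vee * f$ with $\varphi\in C^\infty_c(X,\E_{\tilde\tau})$ and $f\in C^\infty_c(X,\E_\tau)$ is not one of the convolutions defined in Section~\ref{sect:FT} (those pair a section of $\E_\gamma$ with a $(\gamma,\tau)$-spherical, i.e.\ $\Hom(E_\gamma,E_\tau)$-valued, kernel, or a \emph{scalar} function with a section), and it cannot simultaneously be a section of $\E_{\tilde\tau}$ and satisfy $h(e)=\int_G\langle\varphi,f\rangle_\tau\,dg$: the left side of that equation would lie in $E_{\tilde\tau}$ while the right side is a scalar. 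Moreover, even granting such an $h$, applying Theorem~\ref{thm:HCinvSect} to a section of $\E_{\tilde\tau}$ produces the data $A^{\tilde\tau}_Q$ and densities valued in $\End_M(E_{\tilde\tau})$, whereas the corollary's right-hand side involves $A^\tau_Q$ and $\mu^Q_\nu\in\End_M(E_\tau)$; your proposal never bridges this discrepancy. A smaller but real slip: you invoke $\overline{\nu+\lambda}$ on the tempered axis, but the pairing here is bilinear, not sesquilinear, so no complex conjugation should enter.

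The paper's proof avoids all of this by decomposing $\varphi=\sum_i\varphi_i\cdot\tilde v_i$ with \emph{scalar} $\varphi_i\in C^\infty_c(G)$, so that
$\int_G\langle\varphi,f\rangle\,dg=\sum_i\langle(\varphi_i^\vee * f)(e),\tilde v_i\rangle$ with each $\varphi_i^\vee * f\in C^\infty_c(X,\E_\tau)$ (left convolution by a scalar function preserves the bundle). One then applies the inversion formula for $\E_\tau$ itself, uses $\mathcal{F}_\tau(\varphi_i^\vee * f)(\lambda,\cdot)=\pi_{\tau,\lambda}(\varphi_i^\vee)\mathcal{F}_\tau(f)(\lambda,\cdot)$ from (\ref{eq:leftconv}), and---this is the step your sketch is missing---moves the operator $\pi_{\tau,\nu+\lambda}(\varphi_i^\vee)$ across the $G$-equivariant $K$-integration pairing between $H^{\tau,\nu+\lambda}_\infty$ and $H^{\tilde\tau,-(\nu+\lambda)}_\infty$, turning it into $\pi_{\tilde\tau,-(\nu+\lambda)}(\varphi_i)$ applied to $\tilde\tau(\cdot)^{-1}\tilde v_i$. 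It is this duality of principal series (together with the commutation of $\mu^Q_\nu$ with $\pi_{\tau,\nu+\lambda}$), not the reflection $\varphi\mapsto\varphi^\vee$ alone, that produces both the argument $-\nu-\lambda$ and the reassembly of $\sum_i$ into $\mathcal{F}_{\tilde\tau}(\varphi)(-\nu-\lambda,k)$. To repair your proof you would need to replace your ill-typed $h$ by this componentwise scalar convolution and supply the duality argument explicitly.
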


\begin{proof}
Let $\{\tilde{v}_i, i=1, \dots, d_{\tau}\}$ be a vector basis of $E_{\tilde{\tau}}$.
We write $\varphi=\sum_{i=1}^{d_\tau} \varphi_i \cdot \tilde{v}_i$ with $\varphi_i \in C^\infty_c(G)$.
For $h\in C^\infty_c(G)$, we set
$h^{\vee}(g):=h(g^{-1}).$
Then
$$ \int_G \langle \varphi(g),f(g) \rangle \; dg= \sum_{i=1}^{d_\tau} \langle (\varphi^{\vee}_i* f)(e), \tilde{v}_i \rangle,$$
where we used the usual convolution defined in (\ref{eq:convusual}).
Note that $h* f= l(h)f$, where $l$ is the (left) regular representation of $G$ on $C^\infty_c(X,\E_\tau)$.
By the $G$-equivariance of the Fourier transform, we have by (\ref{eq:leftconv}):
$\mathcal{F}_\tau (h * f)(\lambda, k)= \pi_{\tau,\lambda}(h)(\mathcal{F}_\tau (f)(\lambda, \cdot))(k).$
By applying Thm.~\ref{thm:HCinvSect}, we obtain for all $i \in \{1, \dots, d_{\tau}\}$
$$
\langle \tilde{v}_i, (\varphi^{\vee}_i * f)(e) \rangle
= \sum_{Q,\nu} \int_{i\aL^*_Q} \int_K \langle \tilde{v}_i, \tau(k) \mu_\nu^Q(\lambda) \pi_{\tau, \nu+\lambda}(\varphi^{\vee}_i) (\mathcal{F}_\tau(f)(\nu+\lambda, \cdot))(k) \rangle \; dk \; d\lambda.$$
Using that $\mu_\nu^Q$ commutes with $\pi_{\tau, \nu+\lambda}$ and that integration over $K$ gives a $G$-equivariant pairing between $H^{\tau,\nu+\lambda}_\infty$ and $H^{\tilde{\tau}, -(\nu+\lambda)}_\infty$, we obtain that the $K$-integral equals
\begin{eqnarray*}
\int_K &\langle \tilde{\tau}(k^{-1}) \tilde{v}_i,& \pi_{\tau, \nu +\lambda}(\varphi_i^{\vee}) \mu_\nu^Q(\lambda) (\mathcal{F}_\tau(f)(\nu+\lambda, \cdot))(k) \rangle \; dk\\
&=& \int_K \langle (\pi_{\tilde{\tau},-(\nu+\lambda)}(\varphi_i)\tilde{\tau}(\cdot)^{-1} \tilde{v}_i)(k), \;\; \mu_\nu^Q(\lambda) \mathcal{F}_\tau(f)(\nu+\lambda,k) \rangle \; dk.
\end{eqnarray*}
Now
\begin{eqnarray*}
(\pi_{\tilde{\tau},-(\nu+\lambda)}(\varphi_i)\tilde{\tau}(\cdot)^{-1} \tilde{v}_i)(k)
&=& \int_G \varphi_i(g) \tilde{\tau}(\kappa(g^{-1}k))^{-1} a(g^{-1}k)^{\nu+\lambda-\rho} \tilde{v}_i \; dg \\
&=& \int_G \varphi_i(g) e^{\tilde{\tau}}_{-(\nu+\lambda),k}(g) \tilde{v}_i \; dg.
\end{eqnarray*}
The sum over all $i$ equals to $\mathcal{F}_{\tilde{\tau}}(\varphi)(-(\nu+\lambda),k)$.
Combining all the previous formulas, we obtain the corollary.
\end{proof}

\subsubsection*{Proof of the topological Paley-Wiener-Schwartz Thm.~\ref{thm:PWSsect}} \label{subsect:PWSproof}
For $r\geq 0$, let us frist provide the bijection between the vector spaces $C^{-\infty}_r(X,\E_\tau)$ and $PWS_{\tau,r}(\aL^*_\C \times K/M)$.

\begin{prop} \label{lem:isoPWS}
Consider a $K$-representation $(\tau,E_\tau)$.
\begin{itemize}
\item[(a)] Let  $T\in C^{-\infty}_c(X,\E_\tau)$ such that its Fourier transform $\mathcal{F}_\tau(T)=0$, then $T=0$.
\item[(b)] For $r \geq 0$ and $\tilde{T} \in PWS_{\tau,r}(\aL^*_\C \times K/M)$, there exists $T\in C^{-\infty}_r(X,\E_\tau)$ such that $\tilde{T}=\mathcal{F}_\tau(T)$.
\item[(c)] For $r \geq 0$, let $T\in C^{-\infty}_r(X,\E_\tau)$, then $\mathcal{F}_\tau(T) \in PWS_{\tau,r}(\aL^*_\C \times K/M)$.
\end{itemize}
\end{prop}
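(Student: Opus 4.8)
The three claims split into an injectivity statement (a), a surjectivity-onto-the-target statement (b), and the claim that the Fourier transform actually lands in the Paley-Wiener-Schwartz space (c). The natural order is to prove (a), then (c), then (b), since (b) is the hardest and will reuse both the growth bounds established for (c) and the reflexivity/duality machinery set up in Section~\ref{sect:PWS}.

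For part (a), the plan is to use the convolution trick together with Corollary~\ref{cor:FTconv}. Given $T \in C^{-\infty}_c(X,\E_\tau)$ with $\mathcal{F}_\tau(T) = 0$, one convolves with the approximate-identity endomorphism functions $\eta_\epsilon \in C^\infty_c(G,\tau,\tau)$ from \eqref{eq:etag}. By Proposition~\ref{prop:FTconv} (in the form of Remark~\ref{req:leftconv}(a), extended to distributions via Remark~\ref{req:leftconv}(c)), $\mathcal{F}_\tau(T * \eta_\epsilon)(\lambda,k) = \prescript{}{\tau}{\mathcal{F}}_\tau \eta_\epsilon(\lambda)\, \mathcal{F}_\tau T(\lambda,k) = 0$ for all $\epsilon$. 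But $T * \eta_\epsilon$ is a smooth compactly supported section, so by the injectivity part of the Paley-Wiener Theorem~\ref{thm:PWsect} (Level~2) we get $T * \eta_\epsilon = 0$ for every $\epsilon$. Since $\eta_\epsilon$ is an approximate identity (its scalar building block $\tilde\eta_\epsilon$ integrates to $1$ and is supported in shrinking neighbourhoods $U_\epsilon$ of the identity, and the $K\times K$-averaging in \eqref{eq:etag} is compatible with the $\tau$-structure), $T * \eta_\epsilon \to T$ in $C^{-\infty}_c(X,\E_\tau)$ as $\epsilon \to 0$, whence $T = 0$.

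For part (c), the plan is to start from $T \in C^{-\infty}_r(X,\E_\tau)$, so that by equicontinuity of the singleton $\{T\}$ (Schaefer, as recalled after \eqref{eq:boundedT}) there is a continuous seminorm $p$ of the form \eqref{eq:seminormp} with $|T(\varphi)| \le C\, p(\varphi)$, involving derivatives $l_{Y_\alpha}$ up to some fixed order $m$ over a compact set $\Omega$ that we may take inside $\overline{B}_{r'}(o)$ for $r'$ slightly larger than $r$ — and in fact, by a standard support/cutoff argument, over $\overline{B}_r(o)$ with an $\epsilon$-loss that is absorbed at the end. Applying this to $\varphi = e^\tau_{\lambda,k}$: the function $g \mapsto e^\tau_{\lambda,k}(g)$ and its $\U(\kk)$-derivatives are controlled on $\overline{B}_r(o)$ by the elementary estimate $|a(g^{-1}k)^{-(\lambda+\rho)}| \le e^{(|\Repart\lambda| + |\rho|)\,\mathrm{dist}_X(gK,o)}$ together with the observation (made right after Definition~\ref{defn:FTsect}) that $e^\tau_{\lambda,k}$ is entire in $\lambda$; each derivative $l_{Y}$ of order $j$ produces a polynomial factor of degree at most $j$ in $\lambda$. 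This yields precisely a bound of type $(2.iis)_r$, namely $\|l_{Y_\alpha}\mathcal{F}_\tau T(\lambda,k)\|_{E_\tau} \le C_{r,N,\alpha}(1+|\lambda|^2)^{N + |\alpha|/2} e^{r|\Repart\lambda|}$ with $N$ depending on the order $m$ of $p$. Holomorphy $(2.i)$ follows since $T$ is continuous and $\lambda \mapsto e^\tau_{\lambda,k}$ is a holomorphic $C^\infty(X,\E_{\tilde\tau})$-valued map (differentiate under $\langle T, \cdot\rangle$), and the intertwining condition $(2.iii)$ is inherited by continuity: it holds on the dense subspace of smooth compactly supported sections by Theorem~\ref{thm:PWsect}, and both sides of each identity $t\circ\psi_{\overline\xi} \in W$ are continuous in $T$ for the strong dual topology, so the conditions pass to the closure. (Alternatively, one verifies the intertwining relations for $\mathcal{F}_\tau T$ directly, since they are built out of the same Frobenius-reciprocity identities of Proposition~\ref{prop:Level123}, now applied with $T$ in place of $f$, and the proof of Proposition~\ref{prop:Level123} uses only linearity and the defining integral/pairing formulas.)

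For part (b), the plan is the dual/adjoint construction combined with the Plancherel formula. Given $\tilde T \in PWS_{\tau,r}(\aL^*_\C \times K/M)$, we must produce $T \in C^{-\infty}_r(X,\E_\tau)$ with $\mathcal{F}_\tau T = \tilde T$. The idea, following van den Ban--Schlichtkrull \cite{vandenBanDist}, is to \emph{define} $T$ by the Plancherel-type pairing suggested by Corollary~\ref{cor:HCinvSect}: for $\varphi \in C^\infty_c(X,\E_{\tilde\tau})$, set
$$T(\varphi) := \sum_{Q \in \mathcal{Q}} \sum_{\nu \in A^\tau_Q} \int_{i\aL^*_Q} \int_K \langle \mathcal{F}_{\tilde\tau}(\varphi)(-\nu-\lambda,k),\, \mu^Q_\nu(\lambda)\, \tilde T(\nu+\lambda,k)\rangle_\tau \, dk\, d\lambda.$$
One must check: (i) the sum and integrals converge — this uses the at-most-polynomial growth of the Plancherel densities $\mu^Q_\nu$, the rapid decay (in $|\Repart\lambda|=0$ directions, i.e. on $i\aL^*_Q$, after shifting) of $\mathcal{F}_{\tilde\tau}(\varphi)$ coming from Theorem~\ref{thm:PWsect} applied to the \emph{smooth} section $\varphi$, and the at-worst-polynomial growth $(2.iis)_r$ of $\tilde T$ along $i\aL^*_Q$; (ii) $T$ so defined is continuous on $C^\infty(X,\E_{\tilde\tau})$ — here the key point is to bound $|T(\varphi)|$ by a seminorm of $\varphi$ of the form \eqref{eq:seminormp} with $\Omega = \overline{B}_r(o)$, which is exactly where the support statement $\supp T \subset \overline{B}_r(o)$, i.e. $T \in C^{-\infty}_r$, comes from; and (iii) $\mathcal{F}_\tau T = \tilde T$ — this is proved by computing $\mathcal{F}_\tau T(\lambda_0,k_0) = T(e^\tau_{\lambda_0,k_0})$, but since $e^\tau_{\lambda_0,k_0}$ is not compactly supported one first applies $T$ to $\varphi$ ranging over a sequence approximating $e^\tau$, or rather one inserts the definition and uses that the inner integral against $\mathcal{F}_{\tilde\tau}$ of such test functions reproduces the point evaluation via the inversion formula of Theorem~\ref{thm:HCinvSect}, after which injectivity from part (a) pins down $T$ uniquely. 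The \textbf{main obstacle} is step (ii): establishing the sharp support bound $\supp T \subseteq \overline{B}_r(o)$ from the growth rate $r$ in $(2.iis)_r$. This is the heart of any Paley-Wiener-Schwartz argument and requires a genuine contour-shift estimate: one moves the $\lambda$-contour from $i\aL^*_Q$ into the complex domain, picks up the exponential factor $e^{r|\Repart\lambda|}$ from $\tilde T$ against the decay $e^{-R|\Repart\lambda|}$ of $\mathcal{F}_{\tilde\tau}(\varphi)$ for $\varphi$ supported in $\overline{B}_R(o)$, and concludes $T(\varphi)=0$ whenever $R < r$ is replaced by... i.e. whenever $\supp\varphi$ avoids $\overline{B}_r(o)$; making this rigorous in the vector-bundle setting, uniformly over the finitely many $Q$ and $\nu$, and controlling the polynomial loss $(1+|\lambda|^2)^{N+|\alpha|/2}$ by finitely many derivatives of $\varphi$, is the technical core. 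Once the bijection (a)--(c) is in hand, the topological isomorphism statements follow formally: Theorem~\ref{thm:PWsect} gives the topological isomorphism on the smooth level $C^\infty_r \cong PW_{\tau,r}$, the Paley-Wiener-Schwartz space $PWS_{\tau,r}$ is by construction the completion-type space capturing the dual seminorms, and continuity of $\mathcal{F}_\tau$ and its inverse between $C^{-\infty}_c(X,\E_\tau)$ and $PWS_\tau(\aL^*_\C\times K/M)$ is obtained from Lemma~\ref{lem:Montelspace} — it suffices to check that $\mathcal{F}_\tau$ maps bounded sets to bounded sets, which is immediate from the uniform version of the estimates in (c) (a bounded set $B'$ of distributions is equicontinuous, giving one seminorm $p$ that works for all $T\in B'$ simultaneously), and symmetrically for the inverse using the inductive-limit description of $PWS_\tau$. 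The Level~3 statements (b) of Theorem~\ref{thm:PWSsect} reduce to the Level~2 ones exactly as in Definition~\ref{defn:intwcondLevel23} and Theorem~\ref{thm:equivintcond}, by the $K$-equivariance $\varphi(w)(\lambda,k) = \varphi(\lambda)\gamma(k^{-1})w$, so no new analysis is needed there.
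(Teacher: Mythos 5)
Your parts (a) and (c) track the paper's own argument closely: (a) is exactly the convolution with $\eta_\epsilon$ plus Prop.~\ref{prop:FTconv} and the injectivity half of Theorem~\ref{thm:PWsect}; and in (c) your equicontinuity estimate applied to $h=e^\tau_{\lambda,k}$ (with the observation that only the $\aL$-directions contribute polynomial factors in $\lambda$) and your ``approximate $T$ by smooth sections and use that $W$ is closed'' argument for the intertwining condition are what the paper does, the latter implemented there by convolving on the left with a scalar delta-sequence $\delta_\epsilon$ so that $t\circ\mathcal{F}_\tau(\delta_\epsilon*T)_{\overline{\xi}}=\pi_\xi(\delta_\epsilon)\bigl(t\circ\mathcal{F}_\tau(T)_{\overline{\xi}}\bigr)\in W$.

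The genuine gap is in (b). You define $T$ directly by the Plancherel pairing against $\tilde T=\psi$ and then state that the support bound $\supp T\subseteq\overline{B}_r(o)$ requires ``a genuine contour-shift estimate'' which you identify as the technical core but do not carry out; for a $\psi$ that is only polynomially bounded (not rapidly decreasing), uniformly over the cuspidal $Q$ and compatibly with the intertwining data, this would amount to re-proving the hard half of Delorme's theorem. Your step (iii) ($\mathcal{F}_\tau T=\tilde T$) is likewise under-justified: it never explains where the holomorphy and intertwining conditions on $\tilde T$ enter, yet without them the Plancherel pairing still defines \emph{some} distribution whose Fourier transform need not reproduce $\tilde T$. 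The paper disposes of both issues with a single regularization you are missing: insert the factor $\prescript{}{\tau}{\mathcal{F}}_{\tau}(\eta_\epsilon)(\nu+\lambda)$ into the pairing. Because $\eta_\epsilon\in C^\infty_c(G,\tau,\tau)$ is supported in $\overline{B}_\epsilon(o)$, the product $\prescript{}{\tau}{\mathcal{F}}_{\tau}(\eta_\epsilon)\psi$ is rapidly decreasing of exponential type $r+\epsilon$ and still satisfies the intertwining condition, hence by the already-established smooth Paley--Wiener Theorem~\ref{thm:PWsect} it equals $\mathcal{F}_\tau(f_\epsilon)$ for a unique $f_\epsilon\in C^\infty_{r+\epsilon}(X,\E_\tau)$; Corollary~\ref{cor:HCinvSect} identifies $f_\epsilon$ with the regularized functional $T_\epsilon$, and letting $\epsilon\to0$ yields a weak limit $T$ supported in $\bigcap_{\epsilon>0}\overline{B}_{r+\epsilon}(o)=\overline{B}_r(o)$ with $\mathcal{F}_\tau T=\lim_{\epsilon\to0}\prescript{}{\tau}{\mathcal{F}}_{\tau}(\eta_\epsilon)\psi=\psi$ by Corollary~\ref{cor:FTconv}. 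The contour shift is thereby outsourced to Delorme's theorem rather than redone; without this (or an equivalent) device, your proof of (b) is incomplete.
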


\begin{proof}
For each $\epsilon > 0$, consider $\eta_\epsilon \in C^\infty(G, \tau,\tau)$ with compact support in the closed ball $\overline{B}_\epsilon(o)$ as in Cor.~\ref{cor:FTconv}.
Let $T \in C^{-\infty}_c(X,\E_\tau)$ be a distribution, then
$$T_\epsilon:=T * \eta_\epsilon \in C^\infty_c(X,\E_\tau).$$
Moreover, by using the same arguments as in the proof of Cor.~\ref{cor:FTconv}, we have that
$T_\epsilon \stackrel{\epsilon \rightarrow 0}{\longrightarrow} T \text{ (weakly)}.$
Hence, by the Paley-Wiener Thm.~\ref{thm:PWsect}, this implies that 
$ \mathcal{F}_\tau(T_\epsilon) \in PW_{\tau}(\aL^*_\C \times K/M)$.
Note that $\mathcal{F}_\tau(T_\epsilon)$ is holomorphic on $\lambda \in \aL^*_\C$ and it satisfies the conditions $(2.i)$ and $(2.ii)_r$ of Def.~\ref{def:PWsect}.
Furthermore, by Prop.~\ref{prop:FTconv}, we have
\begin{equation} \label{eq:FTconvnep}
\mathcal{F}_\tau(T_\epsilon)(\lambda,k)=\prescript{}{\tau}{\mathcal{F}}_{\tau}(\eta_\epsilon)(\lambda)\mathcal{F}_\tau(T)(\lambda,k), \;\;\;\; (\lambda, k) \in \aL^*_\C \times K/M.
\end{equation}
Due to Cor.~\ref{cor:FTconv}, $\prescript{}{\tau}{\mathcal{F}}_{\tau}(\eta_\epsilon)$ converges uniformly on compact subsets of $\aL^*_\C$ to the identity map, whenever $\epsilon$ tends to $0$. Hence,
$\lim_{\epsilon \rightarrow 0} \mathcal{F}_\tau(T_\epsilon)=\mathcal{F}_\tau(T) $
uniformly on compact sets on~$\aL^*_\C$. 
\begin{itemize}
\item[(a)]  Now assume that $\mathcal{F}_\tau(T)=0$. 
By (\ref{eq:FTconvnep}), we have that $\mathcal{F}_\tau(T_\epsilon)=0$. 
By applying the Paley-Wiener Thm.~\ref{thm:PWsect}, this implies that $T_\epsilon =0$.
Hence, since $T_\epsilon \stackrel{\epsilon \rightarrow 0}{\longrightarrow} T$ weakly, we have that $T=0$.
\item[(b)] Consider $\psi \in PWS_{\tau,r}(\aL^*_\C \times K/M)$.
For each $\epsilon > 0$ and $h\in C^\infty_c(X,\E_{\tilde{\tau}})$, let $T_\epsilon$ be the functional given by
\begin{eqnarray} \label{eq:Te1}
T_\epsilon(h)
&:=&  \sum_{Q \in \mathcal{Q}} \sum_{\nu \in A^\tau_Q} \int_{i\aL^*_Q} \int_K \langle 
 \mathcal{F}_{\tilde{\tau}} (h)(-\nu-\lambda,k) \;,\; \nonumber \\
&&\;\;\;\;\;\;\;\;\;\;\;\;\;\;\;\;\;\;\;\;\;\;\; \mu^Q_\nu(\lambda) \prescript{}{\tau}{\mathcal{F}}_{\tau} (\eta_\epsilon)(\nu+\lambda)\psi(\nu+\lambda,k)\rangle \; dk \; d\lambda
\end{eqnarray}
under the same notations introduced in Thm.~\ref{thm:HCinvSect}.
Notice that, since $\supp(\eta_\epsilon) \subset \overline{B}_\epsilon(o)$ and $\psi$ satisfies the 'slow' growth condition $(2.iis)_r$ of Def.~\ref{def:PWSspace}, for all $r \geq 0$, this implies that for each multi-index $\alpha \in \N_0$ and $N \in \N_0$, there exists a constant $C_{r,N,\alpha}>0$ such that
\begin{equation} \label{eq:psigrowth}
|l_{Y_\alpha} \prescript{}{\tau}{\mathcal{F}}_{\tau} (\eta_\epsilon)(\lambda)\psi(\lambda,k)| \leq C_{r,N,\alpha}(1+|\lambda|^2)^{-N}e^{(r+\epsilon)|\Repart(\lambda)|}, \;\;\; (\lambda,k) \in \aL^*_\C \times K.
\end{equation} 
In addition, for each intertwining datum $(\xi,W)$,
the induced operator
$(\prescript{}{\tau}{\mathcal{F}}_{\tau}(\eta_\epsilon)\psi)_{\overline{\xi}}=\prescript{}{\tau}{\mathcal{F}}_{\tau}(\eta_\epsilon)_{\overline{\xi}}\psi_{\overline{\xi}} \in H_\infty^{\tau|_M}$
satisfies the intertwining condition $(3.iii)$ of Def.~\ref{def:PWsect}.
In fact, for $t\in D^\tau_W$, we have
$t \circ \prescript{}{\tau}{\mathcal{F}}_{\tau}(\eta_\epsilon)_{\overline{\xi}} \in D^\tau_W$ and since $\psi \in  PWS_{\tau}(\aL^*_\C \times K/M)$, this implies that
$$(t\circ \prescript{}{\tau}{\mathcal{F}}_{\tau}(\eta_\epsilon)_{\overline{\xi}}) \circ \psi_{\overline{\xi}} \in W.$$
Therefore, by the Paley-Wiener Thm.~\ref{thm:PWsect}, we have that
$\prescript{}{\tau}{\mathcal{F}}_{\tau}(\eta_\epsilon)\psi$ is the Fourier transform of a unique function $f_\epsilon \in C^\infty_c(X,\E_{\tau})$, i.e.,
$$ \mathcal{F}_{\tau}(f_\epsilon) :=\prescript{}{\tau}{\mathcal{F}}_{\tau}(\eta_\epsilon)\psi.$$
On the other side, by (\ref{eq:Te1}) and Cor.~\ref{cor:HCinvSect}, we have $T_\epsilon=f_\epsilon$.
By (\ref{eq:psigrowth}), we have that $\supp(T_\epsilon) \subset \overline{B}_{r+\epsilon}(o)$.
Thus, by Cor.~\ref{cor:FTconv}, this implies that
\begin{eqnarray} \label{eq:Th}
T_\epsilon(h) \stackrel{\epsilon \rightarrow 0}{\longrightarrow} T(h):=\sum_{Q \in \mathcal{Q}} \sum_{\nu \in A^\tau_Q} \int_{i\aL^*_Q} \int_K \langle \mathcal{F}_{\tilde{\tau}} (h)(-\nu-\lambda,k),\;
\mu^Q_\nu(\lambda) \psi(\nu+\lambda,k)
 \rangle \; dk \; d\lambda \nonumber \\
\end{eqnarray}
and thus $\supp(T) \subset \overline{B}_{r}(o).$
Note that $\mu^Q_\nu$ has at most polynomial growth, thus $T$ is well-defined and continuous.
Since $T$ is compactly supported, we can set $h:=e^\tau_{\lambda,k}$. 
In conclusion, we have found a distribution $T\in C^{-\infty}_r(X,\E_\tau)$ such that
\begin{eqnarray*}
\mathcal{F}_\tau(T)(\lambda,k)
=T(e^\tau_{\lambda,k})
\stackrel{(\ref{eq:Th})}{=}  \lim_{\epsilon \rightarrow 0} T_\epsilon(e^\tau_{\lambda,k})
=\lim_{\epsilon \rightarrow 0} \mathcal{F}_{\tau}(f_\epsilon)(\lambda,k)
&=&\lim_{\epsilon \rightarrow 0} \prescript{}{\tau}{\mathcal{F}}_{\tau}(\eta_\epsilon)(\lambda)\psi(\lambda,k) \\
&=&\psi(\lambda,k).
\end{eqnarray*}

\item[(c)]  
Let us check that for $r \geq 0$, $\mathcal{F}_\tau(T) \in PWS_{\tau,r}(\aL^*_\C \times K/M)$.
This means that we need to verify that the Fourier transform of $T\in C^{-\infty}_r(X,\E_\tau)$ satisfies the conditions $(2.i)-(2.iii)$ of Def.~\ref{def:PWsect}.\\
The condition $(2.i)$ is immediate.
Concerning the intertwining condition $(2.iii)$, in order to show that for each intertwining datum $(\xi,W)$ and
$t\in D^\tau_W$, we have
$$t \circ (\mathcal{F}_\tau(T))_{\overline{\xi}}
 \in 
W \subseteq H_{\xi},$$
we will use a similar convolution argument as above, except that now we are interested to the convolution on the left instead on the right. 
For each $\epsilon > 0$, let $\delta_\epsilon \in C^\infty_c(G)$ be a delta-sequence such that 
$\lim_{\epsilon \rightarrow 0} \delta_\epsilon= \delta_0.$ 
Hence, $\lim_{\epsilon \rightarrow 0} \delta_\epsilon * T=T$, for $T \in C^{-\infty}_r(X,\E_\tau)$.
Moreover, for all representations $(\pi_{\tau,\lambda},H)$ with Fréchet space $H$ and $v \in H$, we have 
$\pi_{\tau,\lambda}(\delta_\epsilon)v \stackrel{\epsilon \rightarrow 0}{\longrightarrow} v.$
By taking the Fourier transform on $\delta_\epsilon * T \in C^{\infty}_r(X,\E_\tau)$, we first prove that for each intertwining datum $(\xi,W)$ and $t \in D^\tau_W$:
$$
\lim_{\epsilon \rightarrow 0} (t \circ  \mathcal{F}_\tau(\delta_\epsilon * T)_{\overline{\xi}}) \in W.
$$
In fact, we have
\begin{eqnarray*} 
t \circ \mathcal{F}_\tau(\delta_\epsilon * T)_{\overline{\xi}}
 &\stackrel{\text{Remark}~\ref{req:leftconv}}{=}& t \circ (\pi_{\tau,\cdot}(\delta_\epsilon)\mathcal{F}_{\tau}(T))_{\overline{\xi}}  \\
&=& (\dots, t_i \circ (\pi_{\tau,\lambda_i}(\delta_\epsilon) \mathcal{F}_{\tau}(T)(\lambda_i,\cdot))_{(m_i)}, \dots) \\
&=& (\dots, t_i \circ \pi_{\xi_i}(\delta_\epsilon) \mathcal{F}_{\tau}(T)_{\overline{\xi}_i}, \dots) \\
&=&( \dots, \pi_{\xi_i}(\delta_\epsilon) (t_i \circ {\mathcal{F}}_{\tau}(T)_{\overline{\xi}_i}), \dots) \\
&=& \pi_\xi(\delta_\epsilon) ( t \circ {\mathcal{F}}_{\tau}(T)_{\overline{\xi}}) \in W,
\end{eqnarray*}
where 
$(\pi_{\sigma_1,\lambda_1}^{(m_1)}(\delta_\epsilon), \dots, \pi_{\sigma_s,\lambda_s}^{(m_s)}(\delta_\epsilon))
 = \pi_\xi(\delta_\epsilon) \in W \subset H_{\xi}.$
Hence, by taking $\epsilon \rightarrow 0$ and since $W$ is closed, we obtain that 
$t \circ (\mathcal{F}_\tau(T))_{\overline{\xi}}  \in W$.\\
It remains to check that $\mathcal{F}_\tau(T)$ statisfies the slow growth condition $(2.iis)_r$. 
Fix $r \geq 0$.
We need to show that for each multi-index $\alpha$, there exist $N\in \N_0$ and a constant $C_{r,N,\alpha}>0$ such that
$$|l_{Y_\alpha} \mathcal{F}_\tau(T)(\lambda,k)| \leq C_{r,N,\alpha} (1+|\lambda|^2)^{N+\frac{|\alpha|}{2}} e^{r|\Repart(\lambda)|}.$$
Note that $l_{Y_\alpha} \mathcal{F}_\tau(T) = \mathcal{F}_\tau(l_{Y_\alpha} T)$.
Let $T\in C^{-\infty}_r(X,\E_\tau)$ be a distribution of order $m \in \N_0$.
Write $X_\beta \in \U(\mathfrak{n})$ and $H_\gamma \in \U(\aL)$ for all multi-indices $\beta, \gamma$.
Since $G/K \cong NA$ and $\U(\mathfrak{n} \oplus \aL) \cong \U(\mathfrak{n}) \U(\aL)$, 
then, there exists a constant $C>0$ such that
\begin{equation} \label{eq:HelgasonIneq}
|T(h)| \leq C  \sum_{|\beta|+|\gamma| \leq m} \sup_{g\in \overline{B}_r(o)} |(l_{X_\beta} (l_{H_\gamma} h))(g)|, \;\;\;\; \forall h \in C^\infty(X,\E_{\tilde{\tau}}).
\end{equation}
Next, we want to apply it to $h=e^\tau_{\lambda,1}$. We observe that
$$ l_{Y_\alpha}\mathcal{F}_\tau(T)(\lambda,k)
= \mathcal{F}_\tau(l_{Y_\alpha}T)(\lambda,k)
=l_{Y_\alpha}T(e^\tau_{\lambda,k})
\stackrel{(\ref{eq:expfctleft})}{=}(l_{Y_\alpha}T)(l_k e^\tau_{\lambda,1})
=(l_{k^{-1}}l_{Y_\alpha}T)(h).$$
Thus, $l_{k^{-1}}l_{Y_\alpha}T$ is a distribution of order $m+|\alpha|$.
Applying (\ref{eq:HelgasonIneq}) to $(l_{k^{-1}}l_{Y_\alpha}T)(h)$ instead of $T(h)$, we obtain
$$\sup_{k \in K}|(l_{k^{-1}}l_{Y_\alpha}T)(h)| \leq C'  \sum_{|\beta|+|\gamma| \leq m+|\alpha|} \sup_{g\in \overline{B}_r(o)} |(l_{X_\beta} (l_{H_\gamma} h))(g)|, \;\;\;\; \forall h \in C^\infty(X,\E_{\tilde{\tau}}).$$
In fact, since $K$ is compact and operates continuously on $C^{-\infty}_c(X,\E_\tau)$, the constant $C'>0$ can be chosen to be independently of $K$.
Moreover, $h$ is annihilated by each $l_{X_\beta}$ for $\beta \neq 0$ and it is an eigenfunction of each $l_{H_\gamma}$ with eigenvalue a polynomial in $\lambda \in \aL^*_\C$ of degree $\leq |\gamma|$, i.e.
$$|l_{Y_\alpha} \mathcal{F}_\tau(T)(\lambda,k)|=|(l_{k}l_{Y_\alpha}T)(e^\tau_{\lambda,1})|
\leq C_{r,N,\alpha} (1+|\lambda|^2)^{N+\frac{|\alpha|}{2}} e^{r|\Repart(\lambda)|},$$
for $N \geq \frac{m}{2}.$
This complete the proof. \qedhere
\end{itemize}
\end{proof}

Consequently, by (\ref{eq:Th}), the \textit{inverse Fourier transform} of $\psi \in PWS_{\tau}(\aL^*_\C \times K/M)$ for a test function $h \in C^\infty_c(X,\E_{\tilde{\tau}})$ is given by
$$\langle \mathcal{F}_\tau^{-1}(\psi),h\rangle:=\sum_{Q \in \mathcal{Q}} \sum_{\nu \in A^\tau_Q} \int_{i\aL^*_Q} \int_K \langle \mathcal{F}_{\tilde{\tau}} (h)(-\nu-\lambda,k),\;
\mu^Q_\nu(\lambda) \psi(\nu+\lambda,k)
 \rangle \; dk \; d\lambda.$$

Finally, we discuss the topology on the image space by which the Fourier transform becomes a topological isomorphism.

\begin{lem}  \label{lem:topPWS}
\begin{itemize}
\item[(a)] The Fourier transform
$\mathcal{F}_\tau: C^{-\infty}_c(X,\E_\tau) \longrightarrow PWS_\tau(\aL^*_\C \times K/M)$
is continuous.
\item[(b)] The inverse Fourier transform
\begin{equation} \label{eq:FTinverseThm}
\mathcal{F}_\tau^{-1}: PWS_{\tau}(\aL^*_\C \times K/M) \longrightarrow C_c^{-\infty}(X,\E_\tau)
\end{equation}
 is continuous.
\end{itemize}
\end{lem}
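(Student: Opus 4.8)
The strategy is to reduce both continuity statements to the bornological/inductive-limit characterizations of the topologies that were carefully set up in this section, and then to import the norm estimates already proved in Proposition \ref{lem:isoPWS}. For part (a), by Lemma \ref{lem:Montelspace} it suffices to show that $\mathcal{F}_\tau$ maps every bounded subset $B' \subset C^{-\infty}_c(X,\E_\tau)$ into a bounded subset of $PWS_\tau(\aL^*_\C \times K/M)$. By the equicontinuity discussion preceding Lemma \ref{lem:Montelspace} (via Schaefer), every such $B'$ is contained in the polar of a single seminorm $p$ of the form \eqref{eq:seminormp}, hence there are a fixed $m \in \N_0$ and a fixed $r \geq 0$ (with $\Omega = \overline{B}_r(o)$) such that $|T(h)| \leq C\,p(h)$ for all $T \in B'$, $h \in C^\infty(X,\E_{\tilde\tau})$. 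In particular every $T \in B'$ lies in $C^{-\infty}_r(X,\E_\tau)$ and has order $\leq m$, \emph{uniformly}. Running the estimate from the proof of Proposition \ref{lem:isoPWS}(c) — applying \eqref{eq:HelgasonIneq} to $h = e^\tau_{\lambda,1}$ and using that $e^\tau_{\lambda,1}$ is killed by $l_{X_\beta}$ for $\beta\neq 0$ and is an eigenfunction of $l_{H_\gamma}$ with polynomial-in-$\lambda$ eigenvalue of degree $\leq |\gamma|$ — gives, with $N \geq m/2$ fixed once and for all, a bound
\[
\sup_{T \in B'}\ |l_{Y_\alpha}\mathcal{F}_\tau(T)(\lambda,k)| \leq C_{r,N,\alpha}\,(1+|\lambda|^2)^{N+\frac{|\alpha|}{2}}\,e^{r|\Repart(\lambda)|}
\]
for all multi-indices $\alpha$, with constants independent of $T \in B'$ and of $k\in K$ (compactness of $K$ absorbs the $l_{k^{-1}}$ twist, as in the cited proof). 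This says exactly that $\mathcal{F}_\tau(B')$ is bounded in the Fréchet space $PWS_{\tau,r,N}$, hence bounded in the inductive limit $PWS_\tau(\aL^*_\C\times K/M)$; continuity of $\mathcal{F}_\tau$ follows.

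For part (b), I would use the universal property of the inductive-limit topology on $PWS_\tau(\aL^*_\C\times K/M)$: $\mathcal{F}_\tau^{-1}$ is continuous if and only if each composite $\mathcal{F}_\tau^{-1}\circ i_{r,N}: PWS_{\tau,r,N} \to C^{-\infty}_c(X,\E_\tau)$ is continuous. Since $PWS_{\tau,r,N}$ is Fréchet (metrizable), it is enough to check sequential continuity: if $\psi_j \to 0$ in $PWS_{\tau,r,N}$, i.e. $\|\psi_j\|_{r,N,\alpha}\to 0$ for every $\alpha$, then $\mathcal{F}_\tau^{-1}(\psi_j) \to 0$ in $C^{-\infty}_c(X,\E_\tau)$. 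Using the explicit inverse Fourier transform formula displayed just before the lemma,
\[
\langle \mathcal{F}_\tau^{-1}(\psi_j),h\rangle=\sum_{Q \in \mathcal{Q}} \sum_{\nu \in A^\tau_Q} \int_{i\aL^*_Q} \int_K \langle \mathcal{F}_{\tilde{\tau}} (h)(-\nu-\lambda,k),\;\mu^Q_\nu(\lambda) \psi_j(\nu+\lambda,k)\rangle \; dk \; d\lambda,
\]
one bounds $|\langle \mathcal{F}_\tau^{-1}(\psi_j),h\rangle|$ for fixed $h$ by pulling $\sup_{\lambda,k}(1+|\lambda|^2)^{-(N+\frac{|\alpha|}{2})}e^{-r|\Repart(\lambda)|}|\psi_j|$ out of the integral: the remaining factor $\int_{i\aL^*_Q}(1+|\nu+\lambda|^2)^{N}|\mu^Q_\nu(\lambda)|\,|\mathcal{F}_{\tilde\tau}(h)(-\nu-\lambda,k)|\,d\lambda$ is finite because $\mu^Q_\nu$ has at most polynomial growth while $\mathcal{F}_{\tilde\tau}(h)(-\nu-\lambda,\cdot)$, being a Paley--Wiener function by Theorem \ref{thm:PWsect}, decays faster than any polynomial on $i\aL^*_Q$ (here one only uses the $\alpha=0$ seminorm on the $\psi_j$ side and $(2.ii)_r$-type decay on the $h$ side). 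Hence $\sup_{h\in B}|\langle\mathcal{F}_\tau^{-1}(\psi_j),h\rangle|\to 0$ for each bounded $B \subset C^\infty(X,\E_{\tilde\tau})$ — actually one gets the estimate uniformly over $h$ in the relevant bounded sets because the Paley--Wiener seminorms of $\mathcal{F}_{\tilde\tau}(h)$ are dominated by finitely many Fréchet seminorms of $h$ — which is precisely convergence in the strong dual topology $p_B$ of $C^{-\infty}_c(X,\E_\tau)$. That $\mathcal{F}_\tau$ and $\mathcal{F}_\tau^{-1}$ are mutually inverse is Proposition \ref{lem:isoPWS}, so (a) and (b) together establish that $\mathcal{F}_\tau$ is a topological isomorphism, proving Theorem \ref{thm:PWSsect}(a); part (b) of the theorem follows from the same argument applied in (\textcolor{blue}{Level 3}), or by restricting along the map \eqref{eq:multispace}.

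The main obstacle is part (a), specifically extracting \emph{uniform} control over $B'$: one must be sure that the reduction to a single seminorm $p$ (fixed order $m$, fixed radius $r$) via equicontinuity is legitimate, and that the estimate from Proposition \ref{lem:isoPWS}(c) genuinely produces constants depending only on $m$, $r$, $\alpha$ and not on the individual distribution — the dependence on $k\in K$ being harmless by compactness. Part (b) is comparatively routine: it is just an interchange of the fixed-$h$ integral with the limit, justified by the rapid decay of $\mathcal{F}_{\tilde\tau}(h)$ against the polynomially-bounded Plancherel density $\mu^Q_\nu$, together with the universal property reducing continuity on the inductive limit to continuity on each Fréchet piece.
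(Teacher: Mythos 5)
Your proposal is correct and follows essentially the same route as the paper: part (a) via Lemma~\ref{lem:Montelspace}, equicontinuity of bounded sets $B'$ giving a uniform order $m$ and radius $r$, and the estimate from Proposition~\ref{lem:isoPWS}(c); part (b) via the universal property of the inductive limit, the explicit inversion formula, the polynomial bound on $\mu^Q_\nu$, and the rapid decay of $\mathcal{F}_{\tilde\tau}(h)$ guaranteed by Theorem~\ref{thm:PWsect}. The only cosmetic difference is that the paper proves the quantitative seminorm bound $p_{\tilde B}(\mathcal{F}_\tau^{-1}(\psi))\leq C\|\psi\|_{r,N,0}$ directly (first landing in $C^{-\infty}(X,\E_\tau)$ and then using the support bound and the coincidence of the two dual topologies on distributions with fixed compact support), whereas you phrase the same estimate as sequential continuity on each Fréchet piece.
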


\begin{proof}
\begin{itemize}
\item[(a)]  
We will show that for each bounded $B'\subset C^{-\infty}_c(X,\E_\tau)$, there exist $r\geq 0$ and $N\in \N_0$ such that $\mathcal{F}_\tau(B')$ is contained as a bounded set in $PWS_{\tau,r,N}$.
Since $PWS_{\tau,r,N} \hookrightarrow PWS_{\tau}(\aL^*_\C \times K/M)$ is continuous, by definition of inductive limit, then $\mathcal{F}_\tau(B')$ is also bounded in  $PWS_{\tau}(\aL^*_\C \times K/M)$.
By Lem.~\ref{lem:Montelspace}, we will have that $\mathcal{F}_\tau$ is continuous.

Now let $B'\subset C^{-\infty}_c(X,\E_\tau)$ be bounded. 
Since $B'$ is equicontinuous and because of (\ref{eq:seminormp}), there exist $r \geq 0, m \in \N_0$ and a constant $C>0$ such that (\ref{eq:HelgasonIneq}) holds uniformly for all $T \in B'$:
$$\sup_{T \in B'} p_B(T)= \sup_{T \in B',\; \varphi \in B}|T(\varphi)| \leq C \sum_{|\alpha| \leq m} \sup_{g \in \overline{B}_r(o)} |l_{Y_\alpha} \varphi(g)|.$$
Now by arguing as in the proof of Prop.~\ref{lem:isoPWS} (c), we obtain, for $N=[\frac{m}{2}]$ that
$$||\mathcal{F}_\tau(T)||_{r,N,\alpha} \leq \infty, \;\;\;\forall T \in B'$$
i.e., $\mathcal{F}_\tau(B') \subset PWS_{\tau,r,N}$ is bounded.
Hence the Fourier transform is continuous.

\item[(b)] 
It suffices to show that if, for all $r \geq 0$ and $N \in \N_0$
\begin{equation} \label{eq:InverseFT1}
\mathcal{F}_\tau^{-1}: PWS_{\tau,r,N}(\aL^*_\C \times K/M) \longrightarrow C^{-\infty}(X,\E_\tau)
\end{equation}
is continuous. Indeed, by construction of the inductive limit topology and the remark between (\ref{eq:seminormTB}) \& (\ref{eq:boundedT}), as well as using $\mathcal{F}_\tau^{-1}(PWS_{\tau,r,N}) \subset C^{-\infty}_r(X,\E_\tau)$, we have that (\ref{eq:FTinverseThm})
is continuous.\\
Fix $r\geq 0$ and $N \in \N_0$. We want to show that (\ref{eq:InverseFT1}) is continuous.
For that, it suffices to show that for every bounded $\tilde{B} \subset C^{\infty}_c(X,\E_{\tilde{\tau}})$, we have
$$p_{\tilde{B}}(\mathcal{F}^{-1}_\tau(\psi)) \leq C||\psi||_{r,N,0} (< \infty), \;\;\;\; \psi\in PWS_{\tau,r,N},$$ 
where $p_{\tilde{B}}(\cdot)$ is the seminorm as in (\ref{eq:seminormTB}) and $C$ is a positive constant.
Since $\tilde{B}$ is bounded subset in $C^{\infty}_c(X,\E_{\tilde{\tau}})$, there exsits $R \geq 0$ so that the support of all $\varphi \in \tilde{B}$ are in $\overline{B}_R(o)$.
Thus, for $\psi\in PWS_{\tau,r,N}$, we have that
\begin{eqnarray*}
&&p_{\tilde{B}}( \mathcal{F}^{-1}_\tau(\psi)) \\
&\stackrel{(\ref{eq:seminormTB})}{=}& \sup_{\varphi \in \tilde{B}}| \langle \mathcal{F}^{-1}_\tau(\psi), \varphi \rangle | \\
&\stackrel{(\ref{eq:HCPformula})}{=}&  \sup_{\varphi \in \tilde{B}} \Big| \sum_{Q \in \mathcal{Q}} \sum_{\nu \in A^\tau_Q} \int_{i\aL^*_Q} \int_K \langle 
\mathcal{F}_{\tilde{\tau}}(\varphi)(-\nu-\lambda,k) \;,\;
\mu^Q_\nu(\lambda) \psi(\nu+\lambda,k) \rangle \; dk d\lambda \Big| \\
&\leq&  \sup_{\varphi \in \tilde{B}} \sum_{Q \in \mathcal{Q}} \sum_{\nu \in A^\tau_Q} \int_{i\aL^*_Q} \int_K \Big| \langle 
\mathcal{F}_{\tilde{\tau}}(\varphi)(-\nu-\lambda,k) \;,\;
\mu^Q_\nu(\lambda) \psi(\nu+\lambda,k) \rangle\Big| \; dk d\lambda.
\end{eqnarray*}
Fix now $Q \in \mathcal{Q}$ and $\nu \in A^\tau_Q$.
Set $$d_{Q,\nu}:=\sup_{\varphi \in \tilde{B}} \int_{i\aL^*_Q} \int_K \Big| \langle 
\mathcal{F}_{\tilde{\tau}}(\varphi)(-\nu-\lambda,k) \;,\;
\mu^Q_\nu(\lambda) \psi(\nu+\lambda,k) \rangle\Big| \; dk d\lambda.$$
It suffices to show that $d_{Q,\nu} \leq C ||\psi||_{r,N,0}.$
We have
\begin{eqnarray*}
d_{Q,\nu} &\leq&  \sup_{\varphi \in \tilde{B}} \int_{i\aL^*_Q} \int_K (1+|{\nu+}\lambda|^2)^{-d_Q}(1+|{\nu+}\lambda|^2)^{d_Q} 
|\mathcal{F}_{\tilde{\tau}}(\varphi)(-\nu-\lambda,k)|\\
&&\;\;\;\;\;\;\;\;\;\;\;\;\;\;\;\;\;\;\;\;\;\;\;\;\;\;\;\;\;\;\;\;\;\;\;\;\;\;\;\;\;\;\;\;\;\;\;\;\;\;\;\;\;\;\;\;\;\;\;\;\;\;\; |\mu^Q_\nu(\lambda) \psi(\nu+\lambda,k) | \; dk d\lambda\\
&\leq& C \sup_{\substack{\varphi \in \tilde{B}\\ k \in K, \lambda \in i\aL^*_Q}}
 (1+|{\nu+}\lambda|^2)^{d_Q} 
|\mathcal{F}_{\tilde{\tau}}(\varphi)(-\nu-\lambda,k)| \;
|\mu^Q_\nu(\lambda) \psi(\nu+\lambda,k)|
\end{eqnarray*}
where $C:=\int_{i\aL^*_Q} (1+|{\nu+}\lambda|^2)^{-d_Q} \;d\lambda < \infty$ and 
$(1+|{\nu+}\lambda|^2)^{d_Q}$ is a weight factor with some $d_Q\in \N_0$ depending on the dimension of $i\aL^*_Q$.
For some positive constant $N$ and growth constant $m \in \N_0$, we get
\begin{eqnarray*}
d_{Q,\nu}
&\leq&C\sup_{\substack{\varphi \in \tilde{B}\\ k \in K, \lambda \in i\aL^*_Q}}
 (1+|{\nu+}\lambda|^2)^{d_Q+N+m} 
|\mathcal{F}_{\tilde{\tau}}(\varphi)(-\nu-\lambda,k)|  \\
&&\cdot \sup_{k \in K, \lambda \in i\aL^*_Q}
 (1+|{\nu+}\lambda|^2)^{-(N+m)} 
|\mu^Q_\nu(\lambda) \psi(\nu+\lambda,k)| \\
&\leq& C'\sup_{\substack{\varphi \in \tilde{B}\\ k \in K, \lambda \in i\aL^*_Q}}
 (1+|{\nu+}\lambda|^2)^{d_Q+N+m} 
|\mathcal{F}_{\tilde{\tau}}(\varphi)(-\nu-\lambda,k)|  \\
&&\cdot\sup_{k \in K, \lambda \in i\aL^*_\C} (1+|\nu+\lambda|^2)^{-N} | \psi(\nu+\lambda,k)|,
\end{eqnarray*}
where $||\mu^Q_\nu(\lambda)||_{\text{op}} \leq C'(1+|{\nu+}\lambda|^2)^m$ of at most polynomial growth of $m \in \N_0$.
Thus
\begin{eqnarray*}
d_{Q,\nu}&\leq&C'' \sup_{\substack{\varphi \in \tilde{B}\\ k \in K, \lambda \in i\aL^*_Q}} e^{R|\nu|}
(1+|{\nu+}\lambda|^2)^{d_Q+N+m}
|\mathcal{F}_{\tilde{\tau}}(\varphi)({-\nu}-\lambda,k)| \\
&&\cdot \sup_{k \in K,\lambda \in i\aL^*_Q} e^{r|\nu|}
(1+|{\nu+}\lambda|^2)^{-N} | \psi(\nu+\lambda,k)| \\
&=&C'' \sup_{\varphi \in \tilde{B}} ||\mathcal{F}_{\tilde{\tau}}(\varphi)||_{R,d_Q+N+m} ||\psi||_{r,N,0},
\end{eqnarray*}
where we set $\xi:=\nu+\lambda \in \aL^*_\C$. By the Paley-Wiener Thm.~\ref{thm:PWsect}, $\mathcal{F}_{\tilde{\tau}}$ is continuous, thus
 $$\sup_{\varphi \in \tilde{B}} ||\mathcal{F}_{\tilde{\tau}}(\varphi)||_{R,d_Q+N+m} < \widetilde{C} < \infty.$$
Therefore, $d_{Q,\nu} \leq C'''|\psi||_{r,N,0}$ and hence the inverse Fourier transform is continuous. \qedhere
\end{itemize}
\end{proof}

\begin{proof}[End of the proof of Thm.~\ref{thm:PWSsect}]
The isomorphism of the Fourier transform map outcomes from  Prop.~\ref{lem:isoPWS} and the continuity and topology statement results from Lem.~\ref{lem:topPWS}, hence this completes the proof.\\
Analogously, we obtain the topological Fourier isomorphism in (\textcolor{blue}{Level 3}) by taking 
$C^{-\infty}_c(G,\gamma,\tau)$ instead of $C^{-\infty}_c(X,\E_\tau)$.
\end{proof}

\section{Invariant differential operators on the Fourier range}
\label{sect:ImpactDO}
We consider the vector space of distributional sections  $C^{-\infty}_{\{o\}}(X,\E_\tau)$ supported at the origin $o=eK\in X$.
Since $g\cdot o \neq o$, $G$ does not act on $C^{-\infty}_{\{o\}}(X,\E_\tau)$, but $K$ as well as $\g$ do, thus $C^{-\infty}_{\{o\}}(X,\E_\tau)$ is a $(\g,K)$-module (e.g. \cite{Wallach}, 3.3.1). 
Moreover, it is generated by the so-called \textit{vector-valued Dirac delta-distributions} $\delta_v$ at $v\in E_\tau$:
$$\delta_v(f)=\langle v, f(e) \rangle_\tau, \;\;\; \text{with test function } f \in C^{\infty}_{(c)}(X, \E_{\tilde{\tau}}),$$
where $\langle \cdot, \cdot \rangle_\tau$ denotes the pairing in $E_\tau$.
In particular, we have the following identification:
$$\U(\g) \otimes_{\U(\kk)} E_\tau \stackrel{\beta}{\cong} C^{-\infty}_{\{o\}}(X,\E_\tau)$$
given by
$ \beta(Z \otimes v)(f):= \langle r_Zf(e),v \rangle_\tau,$ for $Z \in \U(\g), v \in E_\tau, f\in C^\infty(X,\E_{\tilde{\tau}}),$
with actions
$
Y(Z \otimes v) = YZ \otimes v,$ and
$k(Z \otimes v) = \Ad(k)Z \otimes \tau(k)v,$ for $Y \in \kk \;(\text{or } \U(\kk)), k\in K.$\\
In addition, every invariant differential operator $D \in \DD_G(\E_\gamma,\E_\tau)$ may be viewed as a linear map between these spaces 
$D: C^{-\infty}_{\{o\}}(X,\E_\gamma) \longrightarrow C^{-\infty}_{\{o\}}(X,\E_\tau)$.
This map defines an element
$$H_D \in \Hom_{K}(E_\gamma, C^{-\infty}_{\{o\}}(X,\E_\tau)) {\cong} [C^{-\infty}_{\{o\}}(X,\E_\tau) \otimes E_{\tilde{\gamma}}]^{K}$$
given by
\begin{equation} \label{eq:5}
H_D(v):= D( \delta_v) \in C^{-\infty}_{\{o\}}(X,\mathbb{E}_\tau), \;\;\;\; v \in E_\gamma, \delta_v \in C^{-\infty}_{\{o\}}(X,\E_\gamma).
\end{equation}
In other words
\begin{equation} \label{eq:6}
 \langle H_D(v), f \rangle_\tau \stackrel{(\ref{eq:5})}{=} \langle \delta_v, D^t (f )\rangle_\gamma = \langle v, D^t (f) (1) \rangle_\gamma,
\end{equation}
where $D^t \in \DD_G(\E_{\tilde{\tau}}, \E_{\tilde{\gamma}})$ is the adjoint invariant differential operator of $D$ defined by the corresponding pairing.
Since the graded space of both Hilbert spaces $ \DD_G(\E_{\tilde{\tau}}, \E_{\tilde{\gamma}})$ and $\Hom_{K}(E_\gamma, C^{-\infty}_{\{o\}}(X,\E_\tau))$ is isomorphic to $[S(\mathfrak{p}) \otimes \Hom(E_\gamma,E_\tau)]^{K}$, we have the following isomorphism:
\begin{eqnarray*}
\DD_G(\E_\gamma, \E_\tau) &\tilde{\longrightarrow}& \Hom_{K}(E_\gamma, C^{-\infty}_{\{o\}}(X,\E_\tau))\\
D &\mapsto& H_D. 
\end{eqnarray*}
Here, $S(\mathfrak{p})$ denotes the symmetric algebra of $\mathfrak{p} \subset \g$.\\
Consequently, we have 
$\DD_G(\E_\gamma,\E_\tau) \cong \Hom_K(E_\gamma, C^{-\infty}_{\{0\}}(X,\E_\tau)) 
\cong C^{-\infty}_{\{0\}}(G,\gamma,\tau).$
Hence, by applying the Fourier transform in (\textcolor{blue}{Level 3}) and the Paley-Wiener-Schwartz Thm.~\ref{thm:PWSsect} (b), we can deduce the following result.

\begin{prop} \label{prop:PolPWS}
With the notations above, we then have
\begin{eqnarray*}
\prescript{}{\gamma}{\mathcal{F}}_\tau(\DD_G(\E_\gamma,\E_\tau)) 
&\cong& \prescript{}{\gamma}{PWS}_{\tau,0}(\aL^*_\C)\\
&=&\{P \in \emph \Pol(\aL^*_\C, \emph \Hom_M(E_\gamma, E_\tau)) \;|\; P \text{ satisfies } (3.iii) \text{ of Def.~\ref{def:PWSspace}}\}. \QEDA
\end{eqnarray*} 
\end{prop}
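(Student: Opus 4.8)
The plan is to read this statement off as the $r=0$ case of the Paley--Wiener--Schwartz Thm.~\ref{thm:PWSsect}(b), transported through the algebraic identification of invariant differential operators with distributional spherical functions supported at the origin. First I would record that $\DD_G(\E_\gamma,\E_\tau)\cong C^{-\infty}_{\{o\}}(G,\gamma,\tau)$ — this is precisely the chain $\DD_G(\E_\gamma,\E_\tau)\cong\Hom_K(E_\gamma,C^{-\infty}_{\{o\}}(X,\E_\tau))\cong C^{-\infty}_{\{o\}}(G,\gamma,\tau)$ established just before the statement — and that under this chain the Level~3 Fourier transform of Def.~\ref{defn:FTLevel3} is the map acting on $\DD_G(\E_\gamma,\E_\tau)$. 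Next, since $\overline{B}_0(o)$ is exactly the fibre $K$ of $G\to X$ over $o=eK$, a compactly supported distributional $(\gamma,\tau)$-spherical function lies in $C^{-\infty}_0(G,\gamma,\tau)$ if and only if it is supported over $o$; hence $C^{-\infty}_{\{o\}}(G,\gamma,\tau)=C^{-\infty}_0(G,\gamma,\tau)$. Applying Thm.~\ref{thm:PWSsect}(b) with $r=0$ then gives at once
$$\prescript{}{\gamma}{\mathcal{F}}_\tau\big(\DD_G(\E_\gamma,\E_\tau)\big)\;\cong\;\prescript{}{\gamma}{PWS}_{\tau,0}(\aL^*_\C).$$

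It remains to make $\prescript{}{\gamma}{PWS}_{\tau,0}(\aL^*_\C)$ explicit. I would unwind Def.~\ref{def:PWSspace}(b) at $r=0$: the growth bound $(3.iis)_0$ reduces to the existence of $N\in\N_0$ and $C_{0,N}>0$ with $||\varphi(\lambda)||_{\text{op}}\le C_{0,N}(1+|\lambda|^2)^N$ for all $\lambda$, while $(3.i)$ forces $\varphi$ to be entire on $\aL^*_\C\cong\C^n$. A standard Cauchy-estimate (Liouville-type) argument — bounding the Taylor coefficients of $\varphi$ by integrating over polydiscs of radius $R$ and letting $R\to\infty$ — then shows that all partial derivatives of $\varphi$ of order $>2N$ vanish identically, so $\varphi$ is a $\Hom_M(E_\gamma,E_\tau)$-valued polynomial of degree at most $2N$; conversely, any such polynomial trivially satisfies $(3.iis)_0$ with $r=0$. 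Since the intertwining condition $(3.iii)$ of Def.~\ref{def:PWSspace} is carried along verbatim through every identification above, this yields $\prescript{}{\gamma}{PWS}_{\tau,0}(\aL^*_\C)=\{P\in\Pol(\aL^*_\C,\Hom_M(E_\gamma,E_\tau))\mid P\text{ satisfies }(3.iii)\}$, and the proposition follows.

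The one place that is not a purely formal consequence of earlier results is the opening step: one must check that collapsing the support radius to $0$ genuinely returns the $(\g,K)$-module $C^{-\infty}_{\{o\}}(X,\E_\tau)$ used above, and that the Level~3 Fourier transform of Def.~\ref{defn:FTLevel3} matches, under the two isomorphism chains, the transform intended in $\prescript{}{\gamma}{\mathcal{F}}_\tau(\DD_G(\E_\gamma,\E_\tau))$. Everything else is the Liouville reduction together with careful bookkeeping of those chains, and I do not expect a real obstacle there.
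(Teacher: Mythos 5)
Your proposal is correct and follows essentially the same route as the paper, which deduces the proposition directly from the identification $\DD_G(\E_\gamma,\E_\tau)\cong C^{-\infty}_{\{o\}}(G,\gamma,\tau)=C^{-\infty}_0(G,\gamma,\tau)$ together with the $r=0$ case of Thm.~\ref{thm:PWSsect}(b). Your Cauchy-estimate argument identifying $\prescript{}{\gamma}{PWS}_{\tau,0}(\aL^*_\C)$ with intertwining-compatible polynomials is exactly the (unwritten) detail behind the second displayed equality.
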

Thus, provided one has a good understanding of the intertwining condition $(3.iii)$ , one can determine $\DD_G(\E_\gamma,\E_\tau)$.
The converse holds by van den Ban's and Souaifi's Lem.~5.3 and Cor.~5.4 in \cite{vandenBan}.
Strictly speaking these results are in terms of the Hecke algebra (\ref{eq:Heckealg}).
But the $(\gamma,\tilde{\tau})$-isotypic component $\mathcal{H}(G,K)(\gamma \otimes \tilde{\tau})$ of the Hecke algebra  is exactly $\DD_G(\E_\gamma,\E_\tau) \otimes \Hom(E_\tau,E_\gamma)$.
In other words, given all invariant differential operators $D\in \DD_G(\E_\gamma, \E_\tau)$, one can determine explicitly the intertwining condition $(3.iii)$ and the corresponding Paley-Wiener space.

Moreover, we remark that the isomorphism in Prop.~\ref{prop:PolPWS} can also be described more algebraically as a Harish-Chandra type homomorphism, we refer to (\cite{Martin}, p.~4) or (\cite{Palmirotta}, Sect.~2.1) for more details.

In addition, we also have the following result.

\begin{prop} \label{prop:1}
Let $D \in \DD_G(\E_\gamma, \E_\tau)$ be an invariant linear differential operator.
For $f \in C^{\pm \infty}_c(X,\E_{\gamma})$, we then have that
\begin{equation} \label{eq:Thm1}
\mathcal{F}_\tau (Df)(\lambda,k)= \prescript{}{\gamma}{\mathcal{F}}_\tau(H_D)(\lambda) \mathcal{F}_{{\gamma}} (f)(\lambda,k), \;\;\;\;\;  \lambda \in \aL^*_\C, k\in K,
\end{equation}
where $\prescript{}{\gamma}{\mathcal{F}}_\tau(H_D) \in \emph \Pol(\aL^*_\C, \emph \Hom_M(E_\gamma, E_\tau))$ is a polynomial in $\lambda \in \aL^*_\C$ with values in $\emph \Hom_M(E_\gamma, E_\tau)$.
\end{prop}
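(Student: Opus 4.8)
The plan is to prove the multiplier identity (\ref{eq:Thm1}) by transposing $D$ onto the ``exponential'' section $e^\tau_{\lambda,k}$, then to recognise the resulting multiplier as $\prescript{}{\gamma}{\mathcal{F}}_{\tau}(H_D)$ via the defining relation (\ref{eq:6}) of $H_D$, and finally to read off its polynomial nature from the Paley--Wiener--Schwartz theorem. (Morally $Df=f*H_D$, so this is the convolution theorem of Proposition~\ref{prop:FTconv} for the spherical \emph{distribution} $H_D$; I would argue directly in order to avoid extending the convolution to distributional spherical functions.) The argument should handle $f\in C^\infty_c(X,\E_\gamma)$ and $f\in C^{-\infty}_c(X,\E_\gamma)$ simultaneously, since in either case $e^\tau_{\lambda,k}$ is a genuine smooth section against which $D$ and $D^t$ act by the usual adjunction $\langle Df,h\rangle=\langle f,D^th\rangle$.

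\emph{Step 1 (multiplier formula).} Recall $D^t\in\DD_G(\E_{\tilde\tau},\E_{\tilde\gamma})$. Viewing $e^\tau_{\lambda,k}$ as an element of $C^\infty(X,\E_{\tilde\tau})\otimes E_\tau$, so that applying $D^t$ to the section factor gives $D^te^\tau_{\lambda,k}\in C^\infty(X,\E_{\tilde\gamma})\otimes E_\tau$, Definition~\ref{defn:FTsect} yields
\[
\mathcal{F}_\tau(Df)(\lambda,k)=\langle Df,e^\tau_{\lambda,k}\rangle=\langle f,D^te^\tau_{\lambda,k}\rangle .
\]
By Proposition~\ref{prop:Qexpfct} applied to $Q=D^t$ one has $D^te^\tau_{\lambda,k}=P(\lambda)\circ e^\gamma_{\lambda,k}$, with $P(\lambda):=(D^te^\tau_{\lambda,1})(1)\in\Hom(E_\gamma,E_\tau)$ a fixed linear map independent of $g$ and $k$. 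Since post-composition by the constant map $P(\lambda)$ affects only the $E_\gamma$-vector factor and commutes with the pairing against $f$ (which touches only the $\E_{\tilde\gamma}$-section factor), this gives
\[
\mathcal{F}_\tau(Df)(\lambda,k)=P(\lambda)\circ\langle f,e^\gamma_{\lambda,k}\rangle=P(\lambda)\circ\mathcal{F}_\gamma f(\lambda,k),
\]
i.e.\ (\ref{eq:Thm1}) with multiplier $P(\lambda)$.

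\emph{Step 2 (identification of the multiplier).} Using the identification $\DD_G(\E_\gamma,\E_\tau)\cong\Hom_K(E_\gamma,C^{-\infty}_{\{o\}}(X,\E_\tau))\cong C^{-\infty}_{\{o\}}(G,\gamma,\tau)$ of Section~\ref{sect:ImpactDO} together with Definitions~\ref{defn:FTsect} and~\ref{defn:FTLevel3} and the Frobenius isomorphism evaluated at $k=1$ (cf.\ the discussion around (\ref{eq:multispace})), one gets for every $v\in E_\gamma$ the compatibility $\prescript{}{\gamma}{\mathcal{F}}_{\tau}(H_D)(\lambda)(v)=\mathcal{F}_\tau(H_D(v))(\lambda,1)$, which is essentially immediate from the definitions. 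Now $H_D(v)=D(\delta_v)$, and inserting the $\E_{\tilde\tau}$-components of $e^\tau_{\lambda,1}$ into the defining relation (\ref{eq:6}), $\langle H_D(v),h\rangle_\tau=\langle v,(D^th)(1)\rangle_\gamma$, gives $\mathcal{F}_\tau(H_D(v))(\lambda,1)=\langle H_D(v),e^\tau_{\lambda,1}\rangle=\big((D^te^\tau_{\lambda,1})(1)\big)(v)=P(\lambda)(v)$. Hence $\prescript{}{\gamma}{\mathcal{F}}_{\tau}(H_D)(\lambda)=P(\lambda)$, so (\ref{eq:Thm1}) takes the asserted form.

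\emph{Step 3 (polynomiality) and main obstacle.} Since $H_D$ is supported at $o$, it lies in $C^{-\infty}_{0}(G,\gamma,\tau)$, so by the Paley--Wiener--Schwartz Theorem~\ref{thm:PWSsect}(b) its Fourier transform $\prescript{}{\gamma}{\mathcal{F}}_{\tau}(H_D)$ belongs to $\prescript{}{\gamma}{PWS}_{\tau,0}(\aL^*_\C)$; for $r=0$ the growth condition $(3.iis)_0$ is a purely polynomial bound, which together with holomorphy $(3.i)$ forces $\prescript{}{\gamma}{\mathcal{F}}_{\tau}(H_D)$ to be a polynomial on $\aL^*_\C$ with values in $\Hom_M(E_\gamma,E_\tau)$ --- this is exactly Proposition~\ref{prop:PolPWS}. (Alternatively, polynomiality is visible directly from $P(\lambda)=(D^te^\tau_{\lambda,1})(1)$: writing $\g=\mathfrak{n}\oplus\aL\oplus\kk$ and using (\ref{eq:7}), applying the finite-order operator $D^t$ to the $\lambda$-entire section $g\mapsto e^\tau_{\lambda,1}(g)$ and evaluating at $1$ produces a polynomial in $\lambda$, since $\aL$-derivatives contribute factors linear in $\lambda$, $\mathfrak{n}$-derivatives annihilate $e^\tau_{\lambda,1}$, and $\kk$-derivatives contribute only $\tau$-matrix coefficients.) I expect the only genuine difficulty to be bookkeeping: one must apply Proposition~\ref{prop:Qexpfct} and the relation (\ref{eq:6}) in the correct tensor slots and keep careful track of the $\End$/$\Hom$ and section/vector identifications and of the Frobenius reciprocity linking Levels~2 and~3; once the conventions are pinned down, each step is a one-line computation, uniform in the smooth and distributional cases.
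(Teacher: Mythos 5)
Your proposal is correct and follows essentially the same route as the paper: transpose $D$ onto $e^\tau_{\lambda,k}$, invoke Proposition~\ref{prop:Qexpfct} with $Q=D^t$ to extract the multiplier $(D^te^\tau_{\lambda,1})(1)$, identify it with $\prescript{}{\gamma}{\mathcal{F}}_\tau(H_D)(\lambda)$ via (\ref{eq:6}), and treat the smooth and distributional cases uniformly through the pairing. The polynomiality discussion likewise matches the paper's appeal to Proposition~\ref{prop:PolPWS}.
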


\begin{proof}
We know that the Fourier transform of a distribution $H_D \in \Hom_K(E_\gamma, C^{-\infty}_{\{o\}}(X,\E_\tau))$ is defined by
$\prescript{}{\gamma}{\mathcal{F}}_\tau (H_D)(\lambda)(v)= \langle H_D(v), e^\tau_{\lambda,1} \rangle,$
for $v\in E_\gamma$ and where $e^\tau_{\lambda,1} \in C^\infty(G,\tau,\tilde{\tau})$. Hence by (\ref{eq:6}), we obtain
\begin{equation} \label{eq:step1}
\prescript{}{\gamma}{\mathcal{F}}_\tau (H_D)(\lambda)(v)
= \langle H_D(v), e^\tau_{\lambda,1} \rangle _\tau
\stackrel{(\ref{eq:6})}{=} \langle v, D^t (e^\tau_{\lambda,1})(1) \rangle_\gamma
= (D^t (e^\tau_{\lambda,1})(1))v, \;\;\; \lambda \in \aL^*_\C.
\end{equation}
Now,  by considering a function $f\in C^\infty_c(X,\E_\gamma)$,
we  conclude, via 'partial integration', that (\ref{eq:Thm1}) holds.
In fact
\begin{eqnarray*}
\mathcal{F}_\tau(Df)(\lambda,k) = \int_{G} e^\tau_{\lambda,k}(g) D(f(g)) \;dg
&\stackrel{\text{def. of }D^t}{=}& \int_{G} D^t (e^\tau_{\lambda,k}(g)) f(g) \;dg \\
&\stackrel{(\ref{eq:step2})}{=}& \int_{G} D^t (e^\tau_{\lambda,1}(1)) \circ e^\gamma_{\lambda,k}(g) f(g) \;dg \\
&=& D^t(e^\tau_{\lambda,1}(1)) \circ \mathcal{F}_\gamma (f)(\lambda,k) \\
&\stackrel{(\ref{eq:step1})}{=}& \prescript{}{\gamma}{\mathcal{F}}_\tau(H_D)(\lambda) \circ \mathcal{F}_\gamma (f)(\lambda,k).
\end{eqnarray*}
The same computation remains true for $f\in C^{-\infty}_c(X,\E_\gamma)$, by using
 the pairing $\langle \cdot, \cdot \rangle$ instead of the integration. 
\end{proof}

\begin{req}
Consider an additional not necessarily irreducible $K$-representation $(\delta, E_\delta)$. Then, 
for $D_1 \in \DD_G(\E_\tau,\E_\delta)$ and $D_2 \in \DD_G(\E_\gamma,\E_\tau)$,
Prop.~\ref{prop:1} implies that
$$
\prescript{}{\gamma}{\mathcal{F}}_\delta (H_{D_1} \circ H_{D_2}) = \prescript{}{\tau}{\mathcal{F}}_\delta(H_{D_1}) \circ \prescript{}{\gamma}{\mathcal{F}}_\tau(H_{D_2}).$$
\end{req}

\subsection*{Acknowledgement}
This work is supported by the Fond National de la Recherche, Luxembourg under the project code: PRIDE15/10949314/GSM.

\medskip

\begin{minipage}[t][2.5cm][b]{0.7\textwidth}
Université du Luxembourg,\\
Faculty of Science, Technology and Medicine,\\
Department of Mathematics \\
Email addresses: \href{mailto:guendalina.palmirotta@uni.lu}{guendalina.palmirotta@uni.lu} \& \href{mailto:martin.olbrich@uni.lu}{martin.olbrich@uni.lu}
\end{minipage}

\end{document}